\newif\ifpreprint
\title{Verification of First-Order Methods \\for Parametric Quadratic Optimization}
\author{ Vinit Ranjan \qquad Bartolomeo Stellato \\[1em] Department of Operations Research and Financial Engineering\\Princeton University}
\journalname{Mathematical Programming}
\newcommand{\reviewChanges}[1]{{#1}}
\pgfplotsset{compat=1.17}
\newcommand{\eg}{{\it e.g.}}
\newcommand{\ie}{{\it i.e.}}
\newcommand{\ones}{\mathbf 1}
\newcommand{\reals}{{\mbox{\bf R}}}
\newcommand{\symm}{{\mbox{\bf S}}}  
\newcommand{\NPhard}{\mbox{$\mathcal{NP}$-hard}}
\newcommand{\Tr}{\mathop{\bf tr}}
\newcommand{\diag}{\mathop{\bf diag}}
\providecommand{\argmin}{\mathop{\rm argmin}}
\newcommand{\trace}{\mathop{\bf tr}}
\newcommand{\paramset}{\Theta}
\newcommand{\prox}{{\bf prox}}
\newcommand{\Iff}{\quad \Longleftrightarrow \quad}
\newtheorem{theorem}{Theorem}[section]  \newtheorem{lemma}{Lemma}[section]    \newtheorem{proposition}{Proposition}[section]
\newacronym{PQP}{PQP}{parametric quadratic program}
\newacronym{PEP}{PEP}{performance estimation problem}
\newacronym{IQC}{IQC}{integral quadratic constraint}
\newacronym{LP}{LP}{linear program}
\newacronym{QP}{QP}{quadratic program}
\newacronym{SDP}{SDP}{semidefinite program}
\newacronym{QCQP}{QCQP}{quadratically constrained quadratic program}
\newacronym{RLT}{RLT}{reformulation-linearization technique}
\newacronym{DR}{DR}{Douglas-Rachford}
\newacronym{NNLS}{NNLS}{nonnegative least squares}
\newacronym{NUM}{NUM}{network utility maximization}
\newacronym{OSQP}{OSQP}{operator splitting quadratic programming}
\newacronym{ISTA}{ISTA}{iterative soft-thresholding algorithm}
\newacronym{FISTA}{FISTA}{fast iterative soft-thresholding algorithm}
\newacronym{LASSO}{LASSO}{least absolute shrinkage and selection operator}
\newacronym{MPC}{MPC}{model predictive control}
\newacronym{ADMM}{ADMM}{alternating direction method of multipliers}
\newacronym{FOM}{FOM}{first-order method}
\newacronym{CGAL}{CGAL}{conjugate gradient augmented Lagrangian}
\newacronym{SVM}{SVM}{support vector machine} 
\begin{document}

\ifpreprint \else
\title{Verification of First-Order Methods\\for Parametric Quadratic Optimization\thanks{The authors are supported by the NSF CAREER Award ECCS-2239771 and the Princeton School of Engineering and Applied Science Innovation grant.}
}

\author{Vinit Ranjan         \and
        Bartolomeo Stellato}

\institute{V. Ranjan \at
              Department of Operations Research and Financial Engineering, Princeton University\\
\email{vranjan@princeton.edu}           \and
           B. Stellato \at
           Department of Operations Research and Financial Engineering, Princeton University\\
           \email{bstellato@princeton.edu}
}

\date{Received: date / Accepted: date}
\fi

\maketitle

\ifpreprint
\let\oldtabular\tabular
\renewcommand{\tabular}{\small\oldtabular}
\fi

\begin{abstract}
We introduce a numerical framework to verify the finite step convergence of first-order methods for parametric convex quadratic optimization.
We formulate the verification problem as a mathematical optimization problem where we maximize a performance metric (\eg, fixed-point residual at the last iteration) subject to constraints representing proximal algorithm steps (\eg, linear system solutions, projections, or gradient steps). 
Our framework is highly modular because we encode a wide range of proximal algorithms as variations of two primitive steps: affine steps and element-wise maximum steps. 
Compared to standard convergence analysis and performance estimation techniques, we can explicitly quantify the effects of warm-starting by directly representing the sets where the initial iterates and parameters live.
We show that the verification problem is $\mathcal{NP}$-hard, and we construct strong semidefinite programming relaxations using various constraint tightening techniques.
Numerical examples in nonnegative least squares, network utility maximization, Lasso, and optimal control show a significant reduction in pessimism of our framework compared to standard worst-case convergence analysis techniques.
\ifpreprint \else
\keywords{First-order methods, Proximal algorithms, Parametric quadratic optimization, Convergence analysis, Semidefinite relaxations}
\fi
\end{abstract}

\section{Introduction}\label{sec:intro}
\subsection{Parametric quadratic programs}\label{sec:introPQP}
We consider convex \glspl{QP} of the form
\begin{equation}\label{prob:pqp}
	\begin{array}{ll}
		\text{minimize} & (1/2) x^T P x + q(\theta)^T x\\
		\text{subject to} & Ax \in \mathcal{C}(\theta),
	\end{array}
\end{equation}
with decision variable $x \in \reals^n$. 
In this work, we focus on the setting where we solve several instances of~\eqref{prob:pqp} where only parameters~$\theta \in \paramset \subset \reals^p$ vary.
The objective is defined by a symmetric positive semidefinite matrix $P \in \symm^{n\times n}_{+}$ and a linear term $q(\theta) \in \reals^n$ that depends on parameter~$\theta$.
The constraints are defined by a matrix $A \in \reals^{m \times n}$, and a nonempty, closed, convex set $\mathcal{C}(\theta) \subseteq \reals^n$ that depends on $\theta$.
In this paper, the set $\mathcal{C}(\theta)$ takes the form 
\begin{equation*}
\mathcal{C}(\theta) = [l(\theta),u(\theta)] = \left\{v \in \reals^m \mid l_i(\theta) \le v_i \le u_i(\theta)\right\},
\end{equation*}
with $l_i(\theta) \in \{-\infty\} \cup \reals$ and $u_i(\theta) \in \{\infty\} \cup \reals$ for all $\theta \in \Theta$.
We encode equality constraints in this form by setting $l_i(\theta) = u_i(\theta)$ for all $\theta \in \Theta$.
We refer to problem~\eqref{prob:pqp} as a {\it (convex) \gls{PQP}}.

\paragraph{Applications.}
A huge variety of applications from finance, engineering, operations research, and other fields involve the solution of parametric problems of the form~\eqref{prob:pqp}.
Financial applications include portfolio optimization, where we seek to rebalance asset allocation where parameters correspond to new returns estimates~\cite{markowitz,boyd2014portfolio}.
Applications in machine learning include hyperparameter tuning in \glspl{SVM}~\cite{svm}, Lasso~\cite{lasso,candes2007lasso} and Huber fitting~\cite{huber1964huber}.
Several problems in signal processing are also of the form~\eqref{prob:pqp}, where the parameters are the measured signals~\cite{sig_proc}.
Also in control engineering, \gls{MPC} problems involve the solution of \glspl{PQP} in real-time~\cite{borrelli2017mpc,rawlings2009mpc,marcucci2021ws_mpc}.

\paragraph{Solution algorithm.}
Our goal is to solve problem~\eqref{prob:pqp} using \emph{first-order methods} of the form
\begin{equation}\label{eqn:fom}
	z^{k+1} = T_\theta(z^k), \quad k = 0,1,\dots,
\end{equation}
where $T_\theta$ is a {\it fixed-point operator} that depends on the parameter $\theta$, and $z^k \in \reals^d$ is the iterate at iteration $k$.
The {\it fixed-points} $z^\star = T_\theta(z^\star)$ correspond to the optimal solutions of~\eqref{prob:pqp}.
To measure the distance to optimality, we define the {\it fixed-point residual} evaluated at $z^k$ as~$T_{\theta}(z^k) - z^k$, and measure its magnitude~$\|T_{\theta}(z^k) - z^k\|$.
As $k \to \infty$, the fixed-point residual converges to $0$ in well-known cases (\eg, averaged~\cite[Cor.\ 5.8]{ConvexAnalysisBausch2017} or contractive~\cite[Thm.\ 1.50]{ConvexAnalysisBausch2017}~\cite[Thm.\ 2.4.2]{LargeScaleConRyuE2022} operators).
In this paper, we study the behavior of the fixed-point operator $T_{\theta}$ without making any convergence assumption.

\paragraph{Verification problem.}
In several applications, we can only afford a specific number of iterations of algorithm~\eqref{eqn:fom} to compute an optimal solution for each instance of problem~\eqref{prob:pqp} in real-time~\cite{borrelli2017mpc,sig_proc}.
In this work, we numerically verify that the first-order method~\eqref{eqn:fom} is able to compute an approximate solution in $K$ iterations for any instance of problem~\eqref{prob:pqp}, by checking the condition
\begin{equation}
    \label{eqn:verify_problem}
	\|T_{\theta}^K(z^0) - T_{\theta}^{K-1}(z^0)\|^2 \le \epsilon, \quad \forall z^0 \in Z, \quad \forall \theta \in \Theta,
\end{equation}
where $\epsilon > 0$ is our tolerance and $T_{\theta}^K$ is the $K$-times composition of operator $T_{\theta}$.
We represent the set of initial iterates as $Z \subset \reals^d$ (for cold-started algorithms, $Z = \{ 0 \}$).
In this paper, we derive exact formulations and tractable approximations for this problem, highlighting its benefits compared to classical worst-case complexity verification techniques.

\subsection{Related work}

\paragraph{First-order methods.}
First-order optimization methods rely only on first-order derivative information~\cite{beck2017fom} and they were first applied to solve QPs in the 1950s~\cite{frank1956qp}.
Operator splitting algorithms~\cite{ConvexAnalysisBausch2017}, such as proximal algorithms~\cite{parikh2013prox} and the alternating direction method of multipliers (ADMM)~\cite{gabay1976admm,douglas1956dr,boyd2011admm}, are a particular class of first-order methods which solve an optimization problem by breaking it into simpler components (operators). 
Recently, first-order methods have gained wide popularity in parametric optimization and real-time applications for two reasons.
First, they involve computationally cheap iterations that are ideal for embedded processors with limited computing resources, such as those found in embedded control systems~\cite{megahertz,oscontrol,oscontrolreview}.
Second, they can be easily warm-started, which greatly reduces the real-time computational cost.
For these reasons, several general-purpose first-order optimization solvers are now available, including PDLP~\cite{pdlp} for \glspl{LP}, SCS~\cite{scs} for \glspl{SDP}, and the OSQP solver~\cite{stellato2020osqp} for \glspl{QP}.
Despite the latest advances, first-order methods are still highly sensitive to data and can exhibit very slow convergence for badly scaled problems, which can be dangerous for safety-critical applications.
In this project, given a family of PQPs, we will directly verify the performance of a specific first-order method for a fixed-number of iterations, while taking into account the effects of parameter variations and warm-starting.

\paragraph{Classical convergence analysis.}
Classical convergence analysis techniques for first-order methods in convex optimization consider general classes of problems, such as minimization of smooth or strongly convex functions~\cite[Chapter 2]{lecturesnesterov}\cite[Chapter 5]{beck2017fom}.
Some of these tools rely on operator theory by modeling the optimization problem in terms of finding a zero of the sum of monotone operators~\cite{LargeScaleConRyuE2022}. 
The monotonicity property is particularly general, \eg, the subdifferential of any convex proper function is a monotone operator~\cite[Section 2.2]{LargeScaleConRyuE2022}.
While often tight, these results can be quite pessimistic in the context considered in this work. 
First, they focus on generic classes of problems without targeting the structure of PQPs.
For example, in several applications such as linear \gls{MPC} or Lasso, the PQPs in equation~\eqref{prob:pqp} have a constant quadratic term, while other linear terms vary~\cite{stellato2020osqp}.
However, while all problems in a specific family have the same smoothness constant, first-order algorithms can exhibit very different practical performance. 
In fact, they can often converge more quickly than worst-case bounds~\cite[Section VII]{richter2012complexity}~\cite{stellato2020osqp}.
Second, classical convergence analysis techniques focus on asymptotic rates of convergence, which are particularly suited for a large number of algorithm steps, \ie, $K \to \infty$.
However, in real-time applications we have a finite budget of iterations which we cannot exceed. 
Compared to classical convergence analysis, this paper focuses on verifying that a first-order method provably achieves a high-quality solution for a specific family of PQPs, within a fixed number of iterations. 

\paragraph{Computer-assisted convergence analysis.} There are two primary approaches to automate the analysis of first-order methods for convex optimization, the {\it performance estimation problem (PEP)} approach, and the {\it integral quadratic constraints (IQCs)} approach. 
The PEP approach finds the worst-case finite sequence of iterates and problem instance to compute a performance guarantee for a given first-order method~\cite{drori2014pep,taylor2016smooth,taylor2017exactworst}.
Thanks to the easy-to-use PESTO~\cite{pesto2017} and PEPit~\cite{goujad2022pepit} toolboxes, PEP has led to many results in operator splitting algorithms~\cite{taylor2017exactworst,ryu2020os_pep}, distributed optimization~\cite{pep_distr} and the specific case of quadratic functions~\cite{bousselmi2022pepqp,bousselmi2023pep_linop}.
The IQC approach, instead, interprets the optimization algorithm as a dynamical system~\cite{lessard2016IQC,taylor2018lyapunov,iqc_nesterov} and builds sufficient conditions for convergence (stability) via integral quadratic constraints.
Both PEP and IQC approaches solve a \gls{SDP} to compute the worst-case bounds. 
In particular, PEP provides guarantees for a finite number of steps, while the IQC approach provides infinite-horizon bounds by solving a smaller problem that is independent from the number of steps considered.
In addition, the IQC approach can only analyze linearly convergent algorithms, while the PEP approach can deal with sublinearly convergent methods.
The PEP approach has been extended to a nonconvex formulation that can be used to generate theoretically optimal first order methods via custom branch-and-bound procedures \cite{dasgupta2022BnBPEP,jang2023optista}.
In this work, we also solve \glspl{SDP} to analyze the performance of first-order methods for a finite number of steps, but we focus on the geometric properties of the iterates for a specific parametric family of problems.
In Section~\ref{sec:PEPdiff}, we highlight the differences between our framework and PEP, explaining the main advantages and disadvantages.

\paragraph{Neural network verification.}
Our work draws inspiration from neural network verification techniques, which aim to certify that a trained network produces the correct output for any acceptable input (\eg, perturbation of the training data)~\cite{intro_nnverify}.
These approaches formulate verification as an optimization problem where the objective encodes the performance quality (\eg, distance between network output and classification threshold) while the constraints represent the input propagation across the network layers.
Solving such problems can be interpreted as searching for adversarial examples that challenge the network behavior.
Several recent techniques address this problem by either global optimization tools, heuristics, or convex relaxations~\cite{liu2021algorithms}.
Global techniques (also called, complete verifiers) encode the verification problem as a mixed-integer linear program (MILP)~\cite{mipnnfischetti,mipverify,bunel2020branch} where the discrete decisions correspond to identifying specific regions of piecewise (\eg, ReLU) activation funtions.
Thanks to recent developments in efficient bound propagation~\cite{zhang2018efficient,wang2021beta}, custom cutting planes~\cite{zhang2022general}, and GPU-based bound tightening~\cite{xu2021fast} and branch-and-bound algorithm~\cite{zhang22babattack}, global optimization techniques were able to scale to large neural networks.
However, all such algorithms ultimately rely on exhaustive search which can be computationally challenging.
Several relaxed formulations seek to overcome this challenge using linear programming~\cite{dathathri2020enabling,deepsplit}, and semidefinite programming~\cite{fazlyabsdp,brown2022unifyingNN,raghunathan2018semidefinite,sparsennverify}.
By analyzing finite step algorithms as fixed-depth computational graphs, our approach is closely related to neural network verification.
In a very similar spirit, we search for worst-case inputs of our algorithm (\ie, problem parameters and initial iterates) to verify that a convergence criterion is within the desired threshold.
In particular, we show that several operators in neural network graphs, such as ReLU activation functions and feedforward linear transformation, correspond to steps of proximal operators for solving QPs.

\paragraph{Generalization bounds in learned optimizers.}
Popular machine learning approaches, such a {\it learning to optimize (L2O)}~\cite{chen2022l2o} or {\it amortized optimization}~\cite{amos_tutorial}, seek to improve the performance of optimization algorithms by learning algorithm steps for a specific distribution of problem instances.
Unfortunately, verifying the performance of such learned algorithms is still an open challenge.
By considering $K$ steps of a first-order method as a computational graph, Sambharya et al.~\cite{sambharya2022l2ws,sambharya2023learning} combine PAC-Bayes~\cite{mcallester1998pacbayes,shawe-taylor1997pacbayes} and monotone operator~\cite{ConvexAnalysisBausch2017} theories to provide probabilistic generalization bounds on the fixed-point residual.
While also analyzing optimization algorithms as fixed-depth computational graphs, in this work we focus on deterministic iterations without learned components and we model the distribution of problems as a parametric family where parameters fall in a predefined convex set.

\subsection{Contributions}
In this paper, we present a framework to verify the performance of first-order methods for parametric quadratic optimization.
Our contributions are as follows:
\begin{itemize}
    \item We define a verification problem to analyze the performance of first-order methods for parametric quadratic optimization as a mathematical optimization problem. We encode a variety of proximal algorithms as combinations of two {\it primitive steps}: affine steps and element-wise maximum steps. We also explicitly quantify the effects of warm-starting by directly representing the sets where the initial iterates and parameters live.
    \item We show that for unconstrained \glspl{PQP} the verification problem is a convex semidefinite program, and we provide illustrative examples to compare our formulation to the performance estimation approach. For constrained \glspl{PQP}, we show that the verification problem is \NPhard.
    \item We construct strong convex \gls{SDP} relaxations for constrained \glspl{PQP} based on the primitive steps and the sets where parameters and initial iterates live. We also strengthen the relaxation with various bound propagation and constraint tightening techniques.
    \item We compare the performance of our approach with standard convergence analysis and performance estimation tools, obtaining less pessimistic worst-case bounds in various numerical examples from nonnegative least-squares, network utility maximization, Lasso, and optimal control. We also show \gls{PQP} instances where our approach can uncover details about practical convergence behavior that the standard analysis does not. The code to reproduce these results is available at \url{https://github.com/stellatogrp/sdp_algo_verify}.
\end{itemize}

\section{Performance verification}
From problem~\eqref{eqn:verify_problem}, our goal is to verify that all realizations of the parametric family lead to an iterate sequence with small fixed-point residual.
This is equivalent to checking that the optimal objective of the following optimization problem is less than a threshold $\epsilon>0$:
\begin{equation}\label{prob:general_verifyprob}
\tag{VP}
\begin{array}{ll}
    \mbox{maximize} & \|z^K - z^{K-1}\|^2 \\
    \mbox{subject to} & z^{k+1} = T_{\theta}(z^k),\quad k=0,\dots,K-1\\
    & z^0 \in Z, \quad\theta \in \Theta,
\end{array}
\end{equation}
with variables being the problem parameter $\theta \in \reals^p$ and the iterates $z^0,\dots,z^{K} \in \reals^d$.
While a fixed-point iteration can be written as a single operator $T_{\theta}$, often times $T_{\theta}$ is composed of $l$ different operators, \ie,
\begin{equation}\label{eqn:operator_chain}
    z^{k+1} = T_{\theta}(z^k) = (S_{\theta}^1 \circ S_{\theta}^2 \circ \dots \circ S_{\theta}^l)(z^k).
\end{equation}
We discuss specific choices of $S^j_\theta$ for $j=1,\dots,l$ in Section~\ref{sec:fixedpt_core} and~\ref{sec:fixedpt_extrasteps}.
In the following, we simplify the notation by dropping index $j$, and expressing each operator as $$y = S_{\theta}(x),$$ where $x \in \reals^d$ is its input and $y \in \reals^d$ its output. 
Lastly, we discuss the initial sets~$Z$ and the parameter sets~$\Theta$ in Section~\ref{sec:verifyprob_initsets}.

\subsection{Primitive algorithm steps}\label{sec:fixedpt_core}
We incorporate fixed-point iterations as constraints to link successive iterates. 
We present two fundamental operators that serve as building blocks for more complex iterations.

\paragraph{Affine.}
Consider the {\it affine} operator $y = S_\theta(x)$ representing the solution of the following linear system 
\begin{equation}\label{eqn:linearupdate}
	Dy = Ax + B q(\theta),
\end{equation}
where $A$ and $D$ are square matrices in $\reals^{d \times d}$ and $B \in \reals^{d \times n}$.
We assume that $D$ is invertible, which makes $y$ the unique solution to a linear system and therefore a well defined iteration.
Affine steps can have varying coefficients per iteration, \ie, $A^k, B^k, D^k$ for $k=0,\dots, K-1$.

 \paragraph{Element-wise maximum.}
Consider the {\it element-wise maximum} of $x$ and $l(\theta)$, \ie, $y = S_\theta(x) = \max\{x, l(\theta) \}$.
We can represent this operator with the following conditions~\cite[Section III.D]{fazlyabsdp}\cite[Section 2.2]{brown2022unifyingNN}:
\begin{equation}\label{eqn:generalmaxconstraints}
    y = \max\{x, l(\theta) \} \Iff y \geq l(\theta), \quad y \geq x, \quad (y - l(\theta))^T(y - x) = 0.
\end{equation}

\subsection{Fixed-point operators}\label{sec:fixedpt_extrasteps}
Using the primitive steps above, we now define operators representing fixed-point algorithm steps.

\paragraph{Gradient steps.} Consider a gradient step with step size $t$ mapping iterate $x$ to $y$,
$$y = x - t \nabla f_\theta(x) = (I - tP)x - tq(\theta).$$
Here, in the second equality we used the gradient of a quadratic function $\nabla f_{\theta}(x) = Px + q(\theta)$.
We can write this step as an affine step in equation~\eqref{eqn:linearupdate} with $D = I$, $A = I - tP$, and $B = -tI$.
This formulation can also represent varying step sizes $t_k$, by havng $A^k = I - t_kP$ and $B^k = -t_kI$.

\paragraph{Momentum steps.} 
Momentum steps such as Nesterov's acceleration are commonly used to improve convergence of first order methods~\cite{nesterov1983amf,beck2009ista}.
We can write a Nesterov's accelerated gradient step as the combination of two steps mapping iterates $(r, w)$ to $(\tilde{r}, \tilde{w})$,
\begin{equation*}
\begin{array}{ll}
    \tilde{w} &= r - t \nabla f_{\theta}(r)\\
    \tilde{r} &= (1 + \beta)\tilde{w} - \beta w.
\end{array}
\end{equation*}
By substituting the gradient $\nabla f_{\theta}(x) = Px + q(\theta)$, and plugging the first step into the second one, we can write
\begin{equation*}
\begin{array}{ll}
    \tilde{w} &= (I - tP) r - t q(\theta)\\
    \tilde{r} &= - \beta w + (1 + \beta)(I - tP) r - t(1 + \beta) q(\theta).
\end{array}
\end{equation*}
This step corresponds to an affine step in equation~\eqref{eqn:linearupdate} with $y = (\tilde{w}, \tilde{r})$, $x = (w, r),$ and
\begin{equation*}
    D = I, \quad A = \begin{bmatrix}
        0 & I-tP \\
        -\beta I & (1+\beta)(I - tP)
    \end{bmatrix}, \quad
    B = \begin{bmatrix}
        -tI \\
        -t(1+\beta)I
    \end{bmatrix}.
\end{equation*}
In some variants, parameters $\beta$ are updated at each iteration using predefined rule, \eg, $\beta^{k} = (k-1)/(k+2)$~\cite{nesterov1983amf}.
We can represent such cases in our framework by having varying matrices $A^k,$ and $B^k$.

\paragraph{Proximal steps.}
We model several proximal steps with are the building blocks of commonly used proximal algorithms~\cite[Section 1.1]{parikh2013prox}.
For a function $f : \reals^d \to \reals$, the proximal operator $\prox_{f}: \reals^d \to \reals^n$ is defined by:
\begin{equation*}
	y = \prox_{f}(x) = \argmin_v \left(f(v) + (1/2) \left\| v - x \right\|_2^2 \right).
\end{equation*}
We focus on the following proximal operators, summarized in Table~\ref{tab:proxoper_constraints}:
\begin{itemize}
    \item  {\bf $\ell_1$-norm.} The proximal operator of a function $f(x) = \lambda \| x \|_1$ is the (element-wise) \textit{soft-thresholding} operator~\cite[Section 6.5.2]{parikh2013prox}, 
    $$y = \prox_f(x) = (x - \lambda)_+ - (-x - \lambda)_+ = \max\{x, \lambda\} - \max\{-x, \lambda\},$$
    where $(v)_+ = \max\{v, 0\}$. The soft-thresholding operator is, therefore, the composition of two elementwise maximum steps with an affine subtraction step. 
\item {\bf Quadratic function.} The proximal operator of a convex quadratic function $f(x) = (1/2)x^T P x + q(\theta)^Tx + c$, with $P \in \symm_+^n$, is~\cite[Section 6.1.1]{parikh2013prox} 
    $$y = \prox_f(x) = (I+P)^{-1}(x - q(\theta)).$$
Since positive semidefiniteness of $P$ implies invertibility of $I+P$, this corresponds to an affine step with $D = I + P$, $A = I$, and $B = -I$.
\item {\bf Indicator function of a box.} 
Consider the indicator function of a box $C(\theta) = [l(\theta), u(\theta)]$ defined as $f(x) = \mathcal{I}_{[l(\theta), u(\theta)]}(x) = 0$ if $l(\theta) \le x \le u(\theta)$ and $\infty$ otherwise.
Its proximal operator is the projection operator~\cite[Section 1.2]{parikh2013prox} 
$$y = \prox_{f}(x) = \max\{\min\{x, u(\theta)\}, l(\theta)\} = \max\{-\max\{-x, -u(\theta)\}, l(\theta)\},$$
which corresponds to the composition of two element-wise maximum steps.
\end{itemize}

\subsection{Initial iterate and parameter sets}\label{sec:verifyprob_initsets}
We impose constraints on the initial iterates $z^0 \in Z$, and the parameters $\theta \in \Theta$, where $Z$ and $\Theta$ are convex sets of the following forms.

\paragraph{Hypercubes.}
We can represent hypercubes of the form $\{v \in \reals^d \mid l \leq v \leq u \}$, with element-wise inequalities and $l_i \in \{-\infty\}\cup \reals$ and $u_i \in \{\infty\}\cup\reals$.

\paragraph{Polyhedra.}
We represent polyhedral constraints of the form $\{ v \in \reals^d \mid A v \leq b \},$ with element-wise inequalities defined by $A \in \reals^{\ell \times d}$ and $b \in \reals^{\ell}$.
\reviewChanges{
By introducing a slack variable $s \in \reals^\ell$, the set can be embedded into a higher dimensional space as $\{ (v, s) \in \reals^d \times \reals^\ell \mid A v + s = b, \; s \ge 0 \}$.
}

\paragraph{$\ell_p$-balls.}
For $p = 1, 2, \infty$, we consider $\ell_p$ balls centered at $c$ with radius $r$, that is $\{v \in \reals^d \mid \| v - c \|_p \leq r \}.$ 
We define $\ell_1$ balls using linear inequalities and $\ell_2$ balls with quadratic inequalities.  Lastly, $\ell_\infty$ balls are special cases of hypercubes, which we treat separately.

\section{Differences from performance estimation problems}\label{sec:PEPdiff}
In this section, we highlight specific differences from the PEP framework.
One attractive property of the PEP framework is its dimension-free \glspl{SDP} formulations. That is, the size of the variables in a PEP \gls{SDP} scales linearly in $K$ and does not depend on the dimension of the iterates.
Our verification problem~\eqref{prob:general_verifyprob}, instead, is not dimension-free and is more computationally intensive to solve.
However, we observe the benefits of much tighter bound analysis for \glspl{PQP}.
We showcase some comparison examples for unconstrained \glspl{PQP} in Section~\ref{subsec:unconstrainedQP}.

\paragraph{Functional representation.}
Since the first seminal papers~\cite{drori2014pep,taylor2016smooth}, the PEP framework considers the class of $L$-smooth and $\mu$-strongly convex functions.
To represent such infinite-dimensional objects using tractable~\gls{SDP} formulations, PEP uses {\it interpolation constraints} representing their effects on finite-dimensional quantities of interest (\eg, gradients, functions values).
Recent works derived necessary and sufficient interpolation constraints specifically designed for \textit{homogeneous} quadratic functions~\cite{bousselmi2022pepqp,bousselmi2023pep_linop}.
However, while tighter than generic interpolation constraints, such inequalities are no longer sufficient for nonhomogeneous quadratic functions~\cite[Section 3.4]{bousselmi2023pep_linop}.

In this work, we consider a specific class of parametric quadratic functions where the quadratic term is constant.  
Rather than using interpolation inequalities, we directly represent the algorithm steps as affine steps or element-wise maximum steps.
Overall, our functional representation can be seen as an explicit characterization of gradient and proximal information as opposed to the implicit characterization in PEP.

\paragraph{Parametrization and functional shifts.}
Consider two different \gls{PQP} instances indexed by $\theta_1$ and $\theta_2$, \ie, and with the same matrix $P$ but different linear terms $q(\theta_1)$ and $q(\theta_2)$.
Since $P$ is the same, both \glspl{PQP} have the same smoothness and strong convexity properties, so a PEP SDP can only differentiate between these two by altering the initial condition~$\left\| z^0 - z^\star(\theta) \right\|^2 \leq R^2$, where the optimal solution is~$z^\star(\theta) = P^{-1}q(\theta)$.
Therefore, for a given initial iterate $z^0$, $R$ must be large enough to consider the worst-case distance $\max\{\left\| z^0 - z^\star(\theta_1) \right\|,\left\| z^0 - z^\star(\theta_2) \right\|\}$, which can lead to a large amount of conservatism.
In contrast, our explicit characterization of how $\theta$ enters into the problem allows for a tighter bound analysis.

By directly considering the function curvature, the PEP problem is invariant under any orthogonal transformation of the iterates~\cite[Section 3.1]{drori2014pep}.
In addition, the classes of strongly convex and smooth functions~\cite[Section 3]{taylor2016smooth} and homogeneous quadratic functions~\cite[Section 4]{bousselmi2022pepqp} are invariant under additive shifts in the domain.
In this way, the optimal solution $x^\star$ can be shifted to 0 without loss of generality.
While these conditions mostly serve to simplify the PEP formulation, 
in the practical settings considered in this work, we have access to the explicit form of $P$ and $q(\theta)$ as opposed to more general function classes.
By dropping the dimension-free property for our verification problems, we are able to encode the specific \gls{PQP} structure.

\paragraph{Gram matrix and the initial set.}
The PEP \glspl{SDP} use a Gram matrix formulation to represent the interpolation conditions in terms of function values and inner products between iterates and gradients.
However, this means when solving a PEP SDP, we optimize over the inner product values and we do not have access to the vectors themselves.
So, the PEP framework cannot express specific $\ell_2$-ball or box constraints to analyze warm-starting.
For example, consider a constraint of the form 
\begin{equation*}
 \left\| z^0 - v \right\| \leq 1 \quad \iff \quad (z^0)^T z^0 - 2 v^T z^0 + v^T v \leq 1,
\end{equation*}
where $v$ is a given vector.
Since the Gram matrix can only represent inner products by their value, it cannot uniquely identify the components of vectors $v$ and $z^0$.
In contrast, the vector characterization in our verification problem allows to explicitly represent this kind of constraints in terms of $v$.

\paragraph{Relative vs. absolute bounds.}
The performance estimation problem is a flexible framework to compute relative convergence bounds with respect to the initial distance to optimality.
Specifically, PEP computes the smallest constant $\tau$ (\ie, contraction factor) that bounds the optimality gap after~$K$ iterations using the initial distance from the optimal solution (see~\cite[Section 2.1]{goujad2022pepit}),
\begin{equation}\label{eqn:pep_tau}
    \lVert z^K - z^\star \rVert^2 \leq \tau\lVert z^0 - z^\star \rVert^2.
\end{equation}
These bounds are relative because they rely on the initial distance to optimality, $\lVert z^0 - z^\star \rVert^2$.
In contrast, our method directly computes the worst-case bound without using any information on the optimal solution $z^\star$ (\eg, initial distance to optimality), and, therefore, it provides absolute convergence bounds.

\subsection{Unconstrained QP}\label{subsec:unconstrainedQP}
Consider an unconstrained~\gls{PQP} with~$q(\theta) = \theta$ of the form
\begin{equation}\label{prob:unconstrained_QP}
     \begin{array}{ll}
		\text{minimize} & (1/2)x^T P x + \theta^T x,
    \end{array}
\end{equation}
where $P$ is a positive definite matrix.
The optimal solution is $x^\star(\theta)=-P^{-1} \theta$.
We solve this problem using gradient descent with step size~$t$, whose the fixed-point iterations are
\begin{equation*}
    z^{k+1} = z^k - t (P z^k + \theta),
\end{equation*}
which we rewrite in terms of $z^0$ as
\begin{equation}\label{eqn:uqp_gd_zk}
    z^{k+1} = (I-tP)^{k+1} z^0 - \sum_{i=0}^{k} (I-tP)^i (t\theta).
\end{equation}
It is well known that for $L$-smooth and $\mu$-strongly convex functions, we have
\begin{equation*}
    \lVert z^k - z^\star \rVert \leq \tau^k \lVert z^{0} - z^\star \rVert,
\end{equation*}
where  $\tau \in (0, 1)$ as long as $t \in (0, 2/L)$ and the worst-case convergence rate~$\tau$ is minimized when $t = 2/(\mu + L)$~\cite[Chapter 9]{boyd2004convex}\cite[Chapter 2]{lecturesnesterov}.
When replacing the distance to optimality $\left\| z^0 - z^\star\right\|$ with the fixed-point residual $\left\|z^{k} - z^{k-1}\right\|$, the same value of $t$ minimizes the upper bound, 
as shown in Proposition~\ref{prop:optimal_t}, whose proof is delayed to Appendix~\ref{apx:opt_t_proof}.
\begin{proposition}\label{prop:optimal_t}
    Let $0 < \mu \leq L$, $t> 0$, and $\tau = \max\{|1-t\mu|, |1-tL|\}$.
    For any $L$-smooth and $\mu$-strongly convex function and gradient descent with fixed step size $t$, we have
    \begin{equation*}
        \left\| z^{k} - z^{k-1}\right\| \leq \tau^{k-1}(1 + \tau) \left\| z^{0} - z^\star \right\|,
    \end{equation*}
    and the worst-case upper bound is minimized for~$t = 2/(\mu + L)$.
\end{proposition}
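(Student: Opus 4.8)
The plan is to combine the standard contraction estimate for gradient descent, stated immediately before the proposition, with the triangle inequality, and then to reduce the minimization over $t$ to minimizing the scalar contraction factor $\tau(t) = \max\{|1-t\mu|, |1-tL|\}$.

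First I would take as given the well-known bound $\lVert z^k - z^\star \rVert \le \tau^k \lVert z^0 - z^\star \rVert$, where $z^\star$ is the unique minimizer (equivalently, the fixed point of the gradient step, since $\nabla f_\theta(z^\star) = 0$) and $\tau = \max\{|1-t\mu|, |1-tL|\}$ is the Lipschitz constant of the gradient-step operator $I - t\nabla f_\theta$ on the class of $L$-smooth, $\mu$-strongly convex functions. Writing the residual as a telescoped difference $z^k - z^{k-1} = (z^k - z^\star) - (z^{k-1} - z^\star)$ and applying the triangle inequality together with this bound yields $\lVert z^k - z^{k-1} \rVert \le \lVert z^k - z^\star \rVert + \lVert z^{k-1} - z^\star \rVert \le (\tau^k + \tau^{k-1}) \lVert z^0 - z^\star \rVert = \tau^{k-1}(1+\tau) \lVert z^0 - z^\star \rVert$, which is exactly the claimed inequality.

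For the optimality of the step size, I would first observe that $z^0$ and $z^\star$ do not depend on $t$, so $\lVert z^0 - z^\star \rVert$ is a constant multiplier and it suffices to minimize $g(\tau) = \tau^{k-1}(1+\tau)$ over the achievable values of $\tau = \tau(t)$. Since $g$ is strictly increasing on $[0,\infty)$ (both factors are nonnegative and nondecreasing, with $g'(\tau) > 0$ for $\tau > 0$), minimizing the bound is equivalent to minimizing $\tau(t)$ itself. I would then analyze $\tau(t) = \max\{|1-t\mu|, |1-tL|\}$ for $t > 0$: it is a maximum of two V-shaped (hence convex, piecewise-linear) functions with kinks at $t = 1/L$ and $t = 1/\mu$. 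On the interval $[1/L, 1/\mu]$ we have $|1-tL| = tL - 1$ (increasing) and $|1-t\mu| = 1 - t\mu$ (decreasing), and the two branches cross exactly where $tL - 1 = 1 - t\mu$, i.e. at $t = 2/(\mu+L) \in [1/L, 1/\mu]$. By convexity, $\tau(t)$ decreases before this crossing and increases after it, so $t = 2/(\mu+L)$ is the global minimizer, with optimal value $\tau = (L-\mu)/(L+\mu)$.

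The algebra (solving $tL - 1 = 1 - t\mu$ and evaluating $\tau$) is routine; the only points requiring care are the monotonicity argument that lets the step size minimizing the contraction factor also minimize the residual bound, and verifying that the crossing point lies in $[1/L, 1/\mu]$ so that the convex piecewise-linear minimum is attained there rather than at a kink. The degenerate case $\mu = L$, where $f_\theta$ reduces to a single quadratic and $\tau(t) = |1-t\mu|$, is covered by the same formula, giving $t = 1/\mu$ and $\tau = 0$.
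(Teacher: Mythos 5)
Your proof is correct and takes essentially the same route as the paper: the standard contraction bound for the gradient step combined with the triangle inequality yields $\lVert z^k - z^{k-1}\rVert \le \tau^{k-1}(1+\tau)\lVert z^0 - z^\star\rVert$, and the monotonicity of $\tau^{k-1}(1+\tau)$ in $\tau$ reduces optimality of the step size to minimizing $\tau(t)$. The only difference is cosmetic: you establish that $\tau(t)=\max\{|1-t\mu|,|1-tL|\}$ is minimized at $t=2/(\mu+L)$ from scratch via the piecewise-linear crossing argument, whereas the paper cites this fact from Nesterov (Eq.\ 1.2.27).
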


To form the verification problem and analyze~\eqref{eqn:uqp_gd_zk}, we define 
\begin{equation*}
    H = (I-tP)^K - (I-tP)^{K-1}, \quad E = -t(I-tP)^{K-1}.
\end{equation*}
The verification problem becomes
\begin{equation}\label{prob:UQP_verifyprob}
     \begin{array}{ll}
		\text{maximize} & \lVert Hz^0 + E\theta\rVert^2 \\
            \text{subject to} & z^0 \in Z, \quad \theta \in \Theta.
    \end{array}
\end{equation}
If $Z$ and $\Theta$ are representable with quadratic constraints (\eg, ellipsoidal sets), problem~\eqref{prob:UQP_verifyprob} is a nonconvex~\gls{QCQP}.
For this section, let the initial iterate set be the $\ell_2$-ball $Z = \{z \in \reals^d \mid \|z - c_z\|_2 \le r_z\}$ with center $c_z \in \reals^d$ and radius $r_z \geq 0$, and the parameter set be the $\ell_2$-ball $\Theta = \left\{\theta \in \reals^p \mid \|\theta - c_{\theta}\|_2 \le r_{\theta}\right\}$ with center $c_{\theta} \in \reals^p$ and radius $r_{\theta} \geq 0$.
With these sets, we rewrite problem~\eqref{prob:UQP_verifyprob} as:
\begin{equation}\label{prob:UQP_VP_l2ballsets}
    \begin{array}{ll}
		\text{maximize} & \lVert Hz^0 + E\theta\rVert^2 \\
            \text{subject to} & \|z^0 - c_z\|_2 \le r_z, \quad \|\theta - c_{\theta}\|_2 \le r_{\theta}.
    \end{array}
\end{equation}
To derive a convex problem, we lift the problem into a higher dimension and construct an \gls{SDP} reformulation~\cite[Section 3.3]{park2017general} \cite[Section 2]{wang2021sdpqcqp}.
When there is only one quadratic constraint, the S-Lemma guarantees tightness of the \gls{SDP} relaxations~\cite{yakubovich1971slemma,polik2007slemma}.
We now showcase three examples with only one quadratic constraint so that the S-Lemma holds and we can reformulate the verification problem as a convex \gls{SDP}.
In particular, for each of these experiments we will set either $r_z = 0$ or $r_\theta =0$ to reduce either $Z$ or $\Theta$, respectively, to be a singleton.
For consistency, we choose $t=2/(\mu + L)$ across all examples and we compare our results with the solutions using the PEP framework through the PEPit toolbox \cite{goujad2022pepit}.
It is worth noting that we solve the PEP \glspl{SDP} over the class of $\mu$-strongly convex and $L$-smooth {\it quadratic} functions for the most relevant comparison \cite{bousselmi2022pepqp,bousselmi2023pep_linop}.

\paragraph{Initial iterate set comparison.}
First, we isolate the effect of the initial iterate sets only, by setting $r_\theta = 0$.
In this case, we can rewrite the verification problem~\eqref{prob:UQP_VP_l2ballsets} as
\begin{equation}\label{prob:UQP_VP_Zonly}
    \begin{array}{ll}
		\text{maximize} & \lVert Hz^0 + Ec_\theta \rVert^2 \\
            \text{subject to} & \|z^0 - c_z\|_2 \le r_z.
    \end{array}
\end{equation}
By squaring the constraints and expanding the objective, we can rewrite problem~\eqref{prob:UQP_VP_Zonly} as
\begin{equation}\label{prob:UQP_VP_Zonly_expanded}
    \begin{array}{ll}
		\text{maximize} & (z^0)^T H^T H z^0 + 2 (E c_\theta)^T H z^0 + (E c_\theta)^TE c_\theta \\
            \text{subject to} & (z^0)^Tz^0 - 2c_z^T z^0 \le r_z^2 - c_z^Tc_z.
    \end{array}
\end{equation}
To get a convex formulation, given a general quadratic form $v^TJv$ with $v \in \reals^d$, we can write
\begin{equation*}
    v^T J v = \trace(v^T J v) = \trace(J vv^T) = \trace(JM),
\end{equation*}
where the second equality follows from the cyclic property of the trace, and in the last equality we introduce a matrix variable $M = vv^T$.
We relax this rank-one constraint to an inequality constraint $M \succeq vv^T$ and note that
\begin{equation}\label{eqn:matvar_psdrelaxation}
    M \succeq vv^T \Iff \begin{bmatrix}
        M & v \\
        v^T & 1 
    \end{bmatrix} \succeq 0,
\end{equation}
where the equivalence holds due to the Schur complement~\cite[Section B.2]{boyd2004convex}.
With this technique, we derive our convex reformulation of problem~\eqref{prob:UQP_VP_Zonly} as the following \gls{SDP}:
\begin{equation}\label{prob:UQP_VP_Zonly_SDP}
    \begin{array}{ll}
		\text{maximize} & \trace(H^T H M) + 2 (E c_\theta)^T H z^0 + (E c_\theta)^TE c_\theta \\
            \text{subject to} & \trace(M) - 2c_z^T z^0 \le r_z^2 - c_z^Tc_z \\
            & \begin{bmatrix}
                M & z^0 \\
                (z^0)^T & 1 
            \end{bmatrix} \succeq 0.
    \end{array}
\end{equation}
To show the effect of the initial iterates set, we set $\Theta = \{0\}$ and, therefore, the optimal solution $x^\star = 0$.
For simplicity, let $P = \diag(\mu, L)$
with $\mu =1, L=10$.
We first solve the verification problem SDP~\eqref{prob:UQP_VP_Zonly_SDP} with initial set $\overline{Z} = \left\{z \in \reals^2 \mid \|z\|\le 1\right\}$ (\ie~$c_z = 0$, $r_Z = 1$) and let $\overline{z}$ be the worst-case initial iterate.
Then, we solve the verification problem \gls{SDP} with three smaller initial sets.
First, we have $Z_1 = \left\{z \in \reals^2 \mid \left\|z - 0.9 \overline{z}\right\| \le 0.1\right\}$, which is explicitly constructed to contain the worst-case iterate for the larger initial set $\overline{Z}$.
Then, we set $Z_2 = \left\{z \in \reals^2 \mid \left\|z - c_2\right\| \le 0.1\right\}$ with $c_2 = (0.842, -0.317)$, and $Z_3 = \left\{z \in \reals^2 \mid \left\|z - c_3\right\| \le 0.1\right\}$ with $c_3 = (-0.397, -0.807)$.
Note that $c_2$ and $c_3$ are two randomly generated vectors with $2$-norm equal to $0.9$.
Lastly, we also compare with the worst-case iterate from the PEP framework with initial condition $\left\| z^0 - z^\star \right\|^2 \leq 1.$
Results are shown in Figure~\ref{fig:uqp_exp1}.
\begin{figure}
\begin{minipage}{\textwidth}
    \centering
    \raisebox{-1em}{\includegraphics[width=0.44\textwidth]{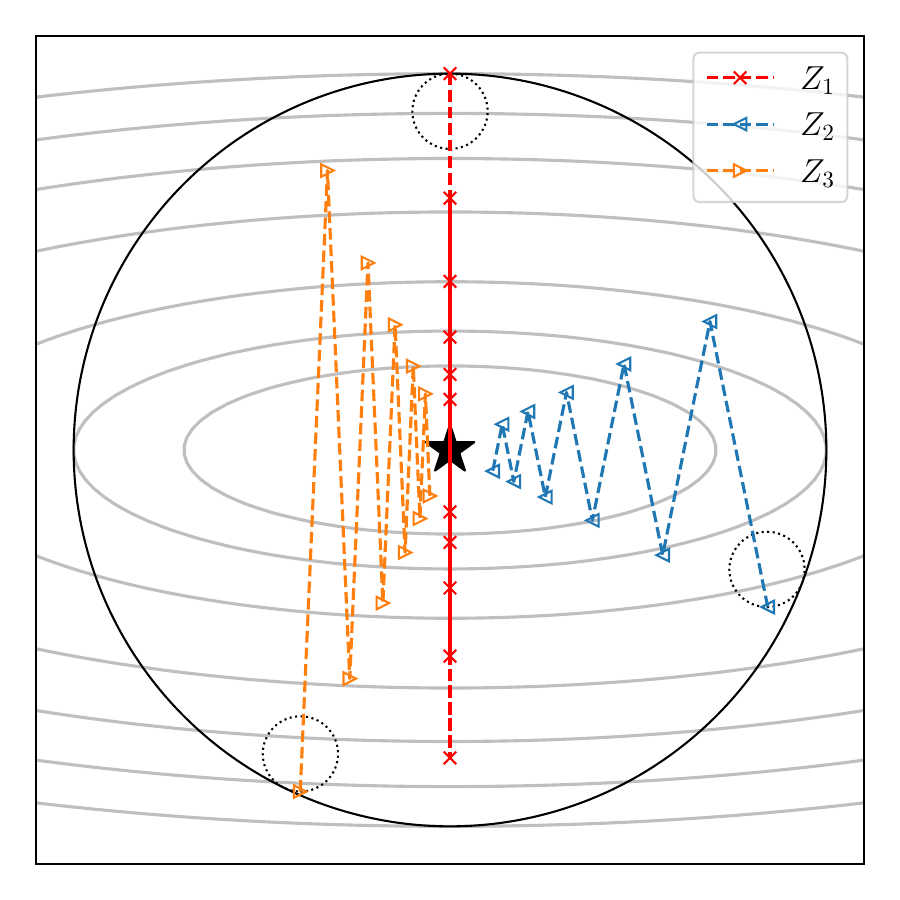}}
    \includegraphics[width=0.48\textwidth]{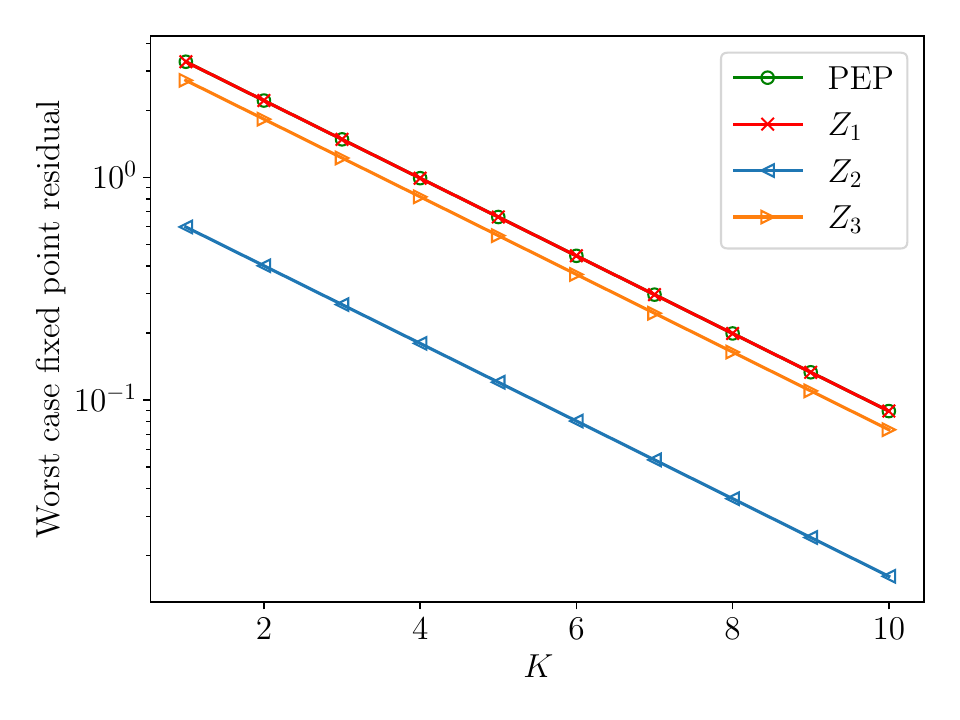}
\end{minipage}
    \caption{Unconstrained QP experiment with different the initial iterate sets (dashed circles). On the left, we show the iterate paths, and on the right, we plot the worst cast fixed-point residuals for up to $K=10$ steps. We also solve the PEP SDP with initial distance to optimality bounded by 1 (solid black circle). The verification problem and PEP agree on the worst-case value for $Z_1$.}
    \label{fig:uqp_exp1}
\end{figure}

\paragraph{Parameter set comparison.}
For this example, we isolate the effect of the parameter sets only, by setting $r_Z = 0$.
We follow a similar derivation as in the initial iterate set comparison experiment and arrive at the following verification problem~\gls{SDP}:
\begin{equation}\label{prob:UQP_VP_Thetaonly_SDP}
    \begin{array}{ll}
		\text{maximize} & \trace(E^T E M) + 2 (H c_z)^T E \theta + (H c_z)^TH c_z \\
            \text{subject to} & \trace(M) - 2c_\theta^T \theta \le r_\theta^2 - c_\theta^Tc_\theta \\
            & \begin{bmatrix}
                M & \theta \\
                \theta^T & 1 
            \end{bmatrix} \succeq 0.
    \end{array}
\end{equation}
In this example, we fix the initial set to $Z = \{z^0\}$ with $z^0 = (1/2, 1/2)$ and consider two different sets for $\Theta$.
We again consider the same $P$ as in the previous example.
Let $\Theta_1 = \left\{ \theta \in \reals^2 \mid \left\| \theta - (1,0) \right\|_2 \le 1/4\right\}$ and $\Theta_2 = \left\{ \theta \in \reals^2 \mid \left\| \theta - (0,1) \right\|_2 \le 1/4\right\}$.
We solve the verification problem \gls{SDP} given by~\eqref{prob:UQP_VP_Thetaonly_SDP} with both $\Theta_1$ and $\Theta_2$, and extract the worst-case parameters, $\theta^\star_1$ and $\theta^\star_2$, and corresponding worst-case iterates, $z^\star_1 = -P^{-1}\theta^\star_1$ and $z^\star_2 = -P^{-1}\theta^\star_2$.
Then, we compare solve the PEP problem with distance from optimality $R^2 = \max\{\lVert (1,0) - z^\star_1\rVert^2, \lVert (0,1) - z^\star_2\rVert^2\}$.
Results are shown in Figure~\ref{fig:uqp_exp2}.
\begin{figure}
    \centering
        \begin{minipage}{\textwidth}
        \centering
\includegraphics[width=0.48\textwidth]{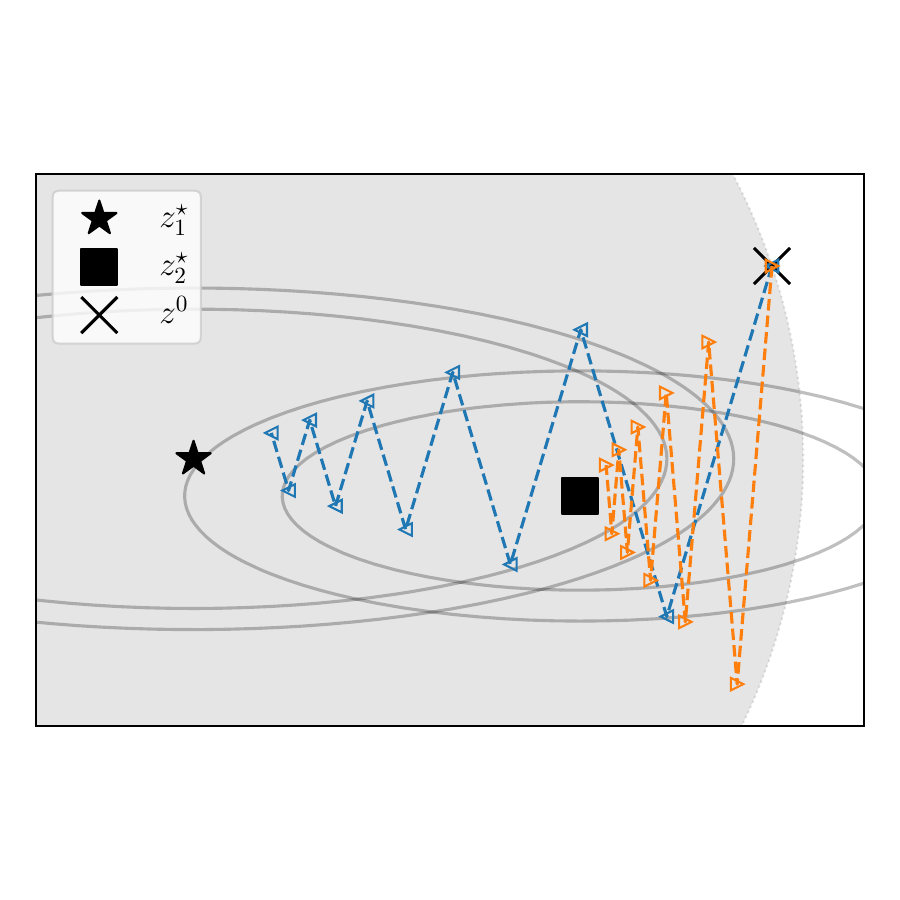}
    \raisebox{1.5em}{\includegraphics[width=0.48\textwidth]{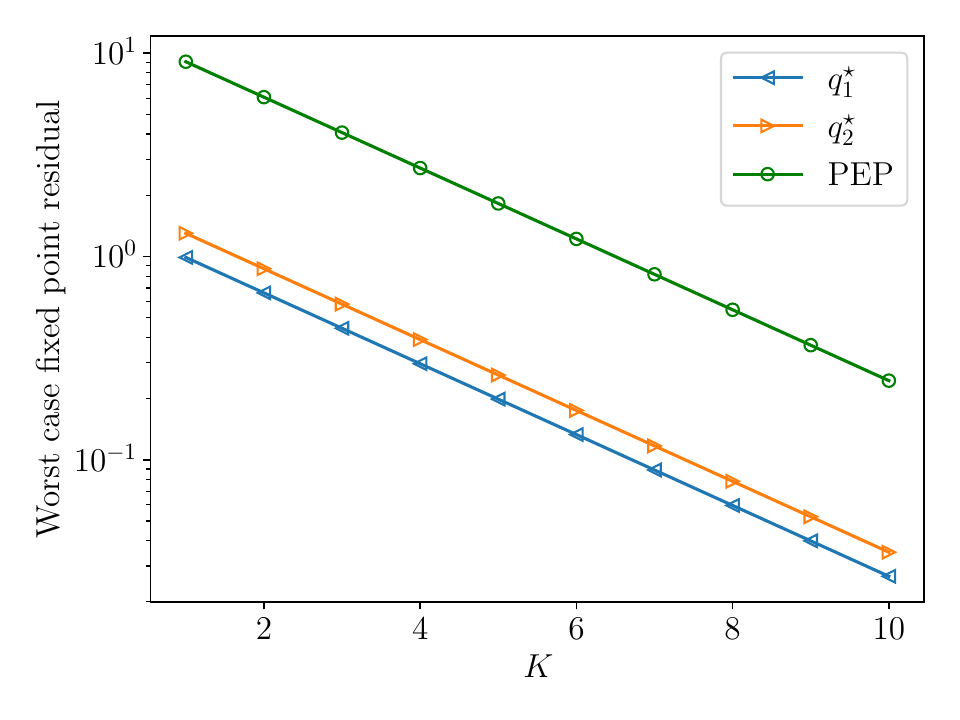}}
    \end{minipage}
    \caption{
Unconstrained QP example with two parameter sets and fixed initial iterate $z^0$. On the left, we show the iterate paths for the worst-case parameter values together with the contour plots of the worst-case functions. The shaded gray area represents the initial condition for the PEP problem, which is large enough to include the initial iterate $z^0$. On the right we plot the worst-case fixed-point residuals for $K=10$ steps. 
}
    \label{fig:uqp_exp2}
\end{figure}

\paragraph{Quadratic term comparison.} 
Lastly, we set $\Theta = \{0\}$ and $Z = \left\{z \in \reals^2 \mid \|z\|\le 1\right\}$, and vary $P$ while maintaining the smoothness and strong convexity parameters. 
To achieve this, we randomly sample orthogonal matrices $Q_1, Q_2$ and define $P_1 = Q_1 P Q_1^T$ and $P_2 = Q_2 P Q_2^T$.
Since we set $\Theta = \{0\}$, the verification problem \gls{SDP} is again of the form~\eqref{prob:UQP_VP_Zonly_SDP}, and $P_1$, $P_2$ enter into the problem through $H$ and $E$.
We solve the verification problem \gls{SDP} with $P_1$ and $P_2$ and show the results in Figure~\ref{fig:uqp_exp3}, in comparison with the PEP framework. 
\begin{figure}
    \begin{minipage}{\textwidth}
        \centering
    \raisebox{-.9em}{\includegraphics[width=0.44\textwidth]{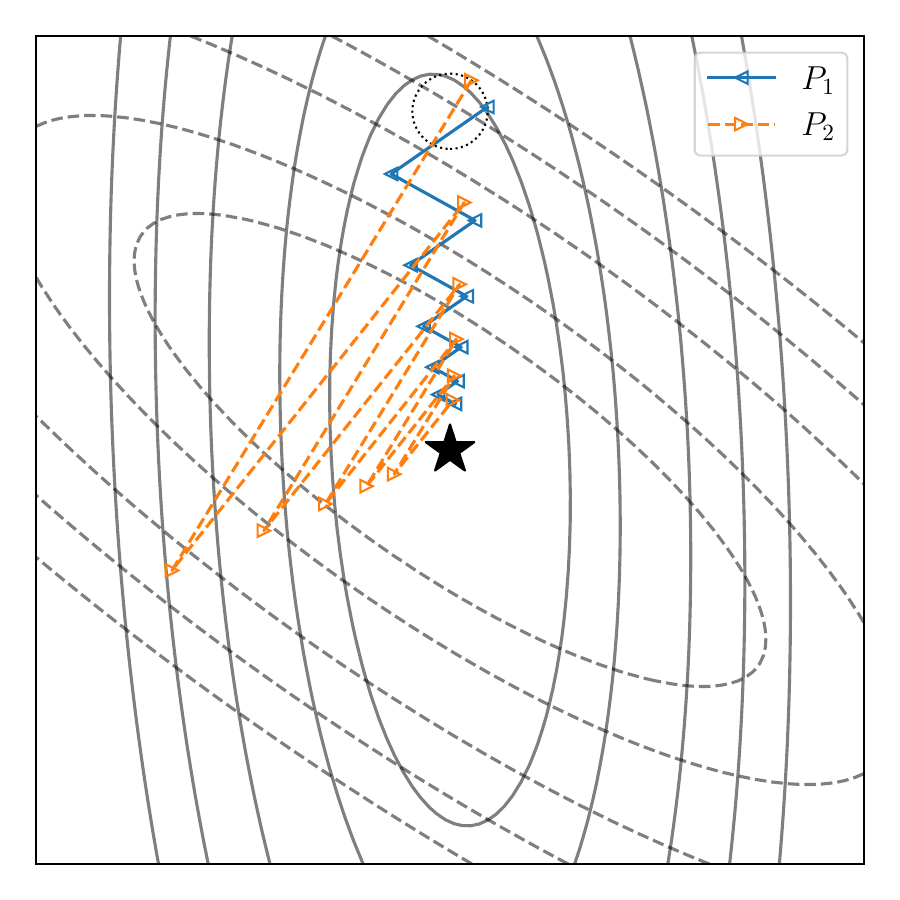}}
    \includegraphics[width=0.48\textwidth]{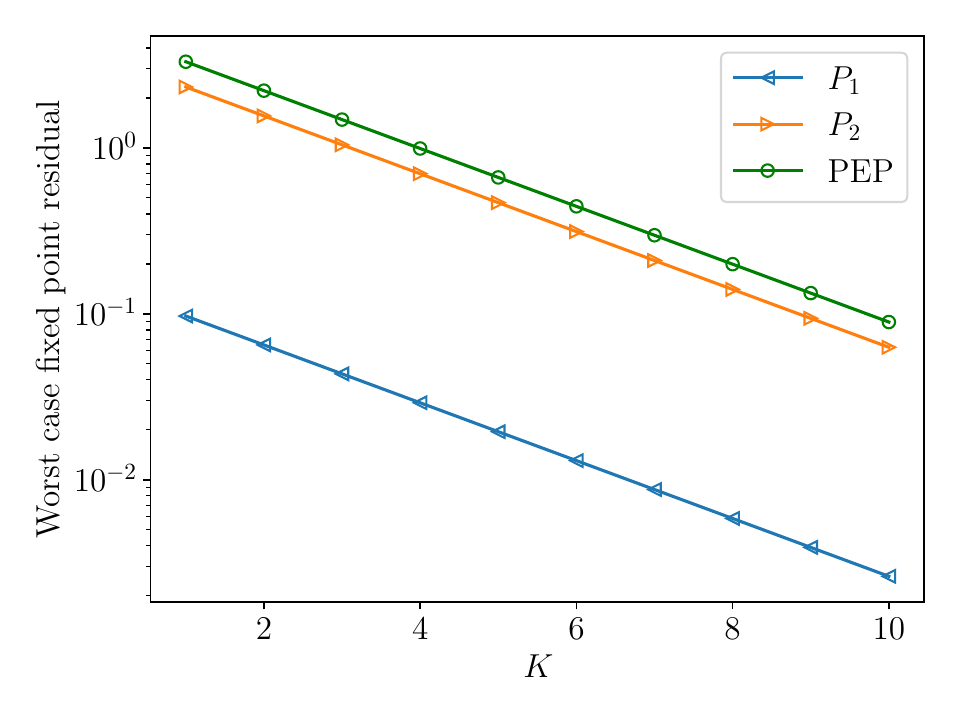}
    \end{minipage}
    \caption{Unconstrained QP example with two different quadratic terms. On the left, we show the worst-case iterate paths for up to $K=10$ steps. On the right, we compare the worst-case residuals to those from the PEP SDP.}
    \label{fig:uqp_exp3}
\end{figure}

\section{Convex relaxation}\label{sec:cvx_relaxation}

\subsection{NP-hardness}
Besides the gradient descent examples for unconstrained QPs presented in Section~\ref{subsec:unconstrainedQP}, where the $S$-lemma provides tight \gls{SDP} relaxations, it is in general \NPhard~to solve problem~\eqref{prob:general_verifyprob}, as shown in Theorem~\ref{thm:nphard}, whose proof appears in Appendix~\ref{apx:nphard}.
\begin{theorem}\label{thm:nphard}
    The performance verification problem~\eqref{prob:general_verifyprob} is \NPhard.
\end{theorem}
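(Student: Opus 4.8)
The plan is to give a polynomial-time reduction from the \emph{partition problem} --- given positive integers $a_1,\dots,a_n$, decide whether there is $s\in\{-1,1\}^n$ with $\sum_i a_i s_i = 0$, which is $\NP$-complete --- to the evaluation of the optimal value of~\eqref{prob:general_verifyprob}. The guiding observation is that, once the forced iterates $z^1,\dots,z^K$ are eliminated, problem~\eqref{prob:general_verifyprob} is the maximization of a \emph{convex} quadratic (a squared norm of a piecewise-affine map of $(z^0,\theta)$) over the convex set $Z\times\Theta$, and maximizing a convex quadratic over a polytope is already $\NP$-hard. I would therefore build an instance that uses only a single affine step together with a polyhedral initial set, so that no element-wise maximum steps are needed at all.

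Concretely, I would take $\Theta=\{0\}$, set $K=1$, and use one affine step of the form~\eqref{eqn:linearupdate} with $D=I$, $A=2I$, $B=0$, i.e.\ $z^1 = 2z^0$. For the initial set I would take the polyhedron
\[
Z = \{\, z^0 \in \reals^n \mid -\ones \le z^0 \le \ones, \; a^T z^0 = 0 \,\},
\]
which is admissible by the hypercube and polyhedral representations in Section~\ref{sec:verifyprob_initsets}. With these choices the objective collapses to $\lVert z^1 - z^0\rVert^2 = \lVert z^0\rVert^2$, so the instance is exactly
\[
\text{maximize} \ \ \lVert z^0\rVert^2 \quad \text{subject to} \ \ z^0 \in Z .
\]
Note that $Z$ is nonempty ($0\in Z$) and compact, so the maximum is attained.

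The correctness argument rests on the tight bound $\lVert z^0\rVert^2 \le n$ on the box $[-\ones,\ones]$, with equality if and only if $z^0\in\{-1,1\}^n$. For the forward direction, if the partition instance is a YES-instance then the certificate $s$ lies in $Z$ and attains $\lVert s\rVert^2=n$, so the optimal value equals $n$. Conversely, if the optimal value equals $n$, any maximizer $z^\star\in Z$ satisfies $\lVert z^\star\rVert^2=n$, which by the equality case forces $z^\star\in\{-1,1\}^n$; together with $a^T z^\star=0$ this is a partition solution. Hence the optimal value of~\eqref{prob:general_verifyprob} equals $n$ if and only if the partition instance is a YES-instance. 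Since the optimal value never exceeds $n$, the threshold decision problem ``is the optimal value of~\eqref{prob:general_verifyprob} at least $n$?'' coincides with partition, which is the standard sense in which~\eqref{prob:general_verifyprob} is $\NP$-hard.

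The step requiring the most care is simply verifying that the constructed instance genuinely lies in the class defined by~\eqref{prob:general_verifyprob} --- in particular, that a polyhedral $Z$ carrying the linear equality $a^T z^0 = 0$ is admissible (via the slack embedding in Section~\ref{sec:verifyprob_initsets}) and that $D=I$ is an invertible affine step --- and confirming the equality case of the bound $\lVert z^0\rVert^2\le n$ that drives the converse direction. I would close by remarking that this reduction already establishes $\NP$-hardness for purely affine iterations over polyhedral initial sets; the complementarity constraints contributed by the element-wise maximum steps in~\eqref{eqn:generalmaxconstraints} only render the feasible region nonconvex and can only increase the difficulty. If one instead insisted on hypercube-only initial sets, an alternative reduction is available in which the complementarity $(y-l(\theta))^T(y-x)=0$ of a maximum step is used to pin a variable to $\{0,1\}$; there the delicate point would be closing the feedback loop so that the complementarity becomes an active constraint rather than an automatically satisfied identity.
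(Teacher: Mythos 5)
Your proposal is correct and takes essentially the same route as the paper: both give a Karp-style reduction from an $\NP$-complete feasibility problem (you use partition; the paper uses general 0-1 integer-programming feasibility) to threshold-deciding the maximum of a strictly convex quadratic over a polyhedral initial set, realized with a single affine step, where attaining the threshold ($n$ for you, $n/4$ for the paper's objective $\lVert x - (1/2)\ones\rVert^2$ over $[0,1]^n \cap \{Ax=b\}$) forces the maximizer to the vertices of the box and hence yields a combinatorial certificate. The construction is admissible in the paper's framework and the argument has no gaps.
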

In the following sections, we derive strong relaxations for the performance verification problem, by replacing nonconvex components in objective and constraints with semidefinite constraints that are linear in the matrix variables.
With this procedure, we can compute upper bounds to the optimal value of problem~\eqref{prob:general_verifyprob}.
For each set described in this section, we represent the original constraints as set $F$ and the lifted constraints as set $G$.
\reviewChanges{
In each case, $F$ includes exact rank-1 constraints to give the relationships between vector and matrix variables, while $G$ includes the corresponding convex relaxations.
}

\subsection{Objective}\label{sec:sdp_obj}

Proposition~\ref{prop:obj_relax} shows how we replace the quadratic objective $\|z^{K} - z^{K-1}\|_2^2$ with a linear objective in the new matrix variables together with a positive semidefinite constraint to obtain an upper bound. The proof appears in Appendix~\ref{apx:proof_obj_relax}.

    \begin{proposition}[objective reformulation]\label{prop:obj_relax}
        Consider functions $f : \reals^d \times \reals^d \to \reals$ and $g : \symm^{2d \times 2d} \to \reals$ defined as 
        \reviewChanges{
        \begin{equation*}
f(x, y) = \|x - y\|_2^2, \quad
        g(M) = \trace\left(\begin{bmatrix}
            I & -I \\ -I & I
        \end{bmatrix} M \right).
        \end{equation*}
        If the following holds:
        \begin{equation*}
            \begin{bmatrix}
                         \multicolumn{2}{c}{\multirow{2}{*}{$M$}}& x\\
                       \multicolumn{2}{c}{}&y\\
                       x^T & y^T & 1
                    \end{bmatrix} \succeq 0,
        \end{equation*}
        then, $g(M) \ge f(x, y)$.
        }
\end{proposition}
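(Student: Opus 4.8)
The plan is to recognize that both $f$ and $g$ are built from the same quadratic form $v \mapsto v^T L v$, where $L = \begin{bmatrix} I & -I \\ -I & I \end{bmatrix}$ is exactly the matrix appearing in the definition of $g$. Writing $w = \begin{bmatrix} x \\ y \end{bmatrix} \in \reals^{2d}$, a direct expansion gives
\begin{equation*}
w^T L w = \|x\|_2^2 - 2 x^T y + \|y\|_2^2 = \|x - y\|_2^2 = f(x, y),
\end{equation*}
while by the cyclic property of the trace $g(M) = \trace(L M)$. Hence the claim $g(M) \ge f(x,y)$ is equivalent to the inequality $\trace(L M) \ge \trace(L w w^T)$, so the whole proof reduces to comparing $M$ against $w w^T$ in the semidefinite order.

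Next I would convert the hypothesis into precisely that comparison. The assumed block positive semidefiniteness
\begin{equation*}
\begin{bmatrix} M & w \\ w^T & 1 \end{bmatrix} \succeq 0
\end{equation*}
is, by the Schur complement as already used in~\eqref{eqn:matvar_psdrelaxation}, equivalent to $M \succeq w w^T$, i.e.\ $M - w w^T \succeq 0$.

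The final step uses the positivity of the trace pairing. I observe that $L$ is itself positive semidefinite, since $L = \begin{bmatrix} I \\ -I \end{bmatrix} \begin{bmatrix} I & -I \end{bmatrix}$ is an outer product. Then, invoking the standard fact that $\trace(AB) \ge 0$ whenever $A \succeq 0$ and $B \succeq 0$, applied with $A = L$ and $B = M - w w^T$, I obtain $\trace\big(L (M - w w^T)\big) \ge 0$. Rearranging and using the identities from the first paragraph yields $g(M) = \trace(L M) \ge \trace(L w w^T) = w^T L w = f(x, y)$, as desired.

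I do not expect any serious obstacle here, as the argument is short. The only points requiring care are the bookkeeping of the block ordering $w = (x; y)$ so that it matches the $2 \times 2$ block structure of $L$, and citing the trace-pairing inequality $\trace(AB) \ge 0$ for PSD $A, B$ (which follows, e.g., by writing $A = A^{1/2} A^{1/2}$ and noting $\trace(A^{1/2} B A^{1/2}) \ge 0$ since $A^{1/2} B A^{1/2} \succeq 0$).
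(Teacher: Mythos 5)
Your proof is correct and follows essentially the same route as the paper's: rewrite $\|x-y\|_2^2$ as $\trace\bigl(L\,ww^T\bigr)$ with $w=(x;y)$, use the Schur complement to turn the block PSD hypothesis into $M \succeq ww^T$, and conclude via the fact that $\trace(AB)\ge 0$ for PSD $A,B$ (which the paper isolates as a helper lemma, proved the same way). The only cosmetic difference is that you certify $L \succeq 0$ by the explicit factorization $L = \bigl[\begin{smallmatrix} I \\ -I \end{smallmatrix}\bigr]\bigl[\begin{smallmatrix} I & -I \end{smallmatrix}\bigr]$, whereas the paper argues it from convexity of the quadratic form; yours is if anything the more direct justification.
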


\subsection{Primitive algorithm steps}\label{sec:cvx_algsteps}
Since all fixed-point iteration steps we consider can be written in terms of affine and element-wise maximum steps (Section~\ref{sec:fixedpt_core}), we describe their reformulation in detail.

\paragraph{Affine.} 
For an affine step \eqref{eqn:linearupdate} of the form $Dy = Ax + Bq(\theta)$, we can directly enforce the linear equalities as constraints of the verification problem.
However, as shown in Proposition~\ref{prop:obj_relax}, we introduce new matrix variables in the \gls{SDP} relaxation.
To bound the new matrix variables that correspond to affine steps, we self-multiply the constraints:
\begin{equation}
    (Dy)(Dy)^T = (Ax + Bq(\theta))(Ax + Bq(\theta))^T,
\end{equation}
and this allows us to derive constraints on the matrix variables~\cite{park2017general,jiang2020qcqp_sdp}.
Similarly to equation~\eqref{eqn:matvar_psdrelaxation}, we linearize the vector products by introducing new matrix variables, as shown in Proposition~\ref{prop:affine_relax}, which we prove in Appendix~\ref{apx:proof_affine_relax}.

\begin{proposition}[affine step]\label{prop:affine_relax}
    Consider 
matrices $A \in \reals^{d\times d}$, $B \in \reals^{d \times n}$, $D \in \reals^{d \times d}$, where $D$ is invertible.
    \reviewChanges{
    Let $F, G \subseteq \reals^d \times \reals^d \times \reals^n \times \symm^{d \times d} \times \symm^{(d + n) \times (d + n)}$ be defined by
    \begin{align*}
        F &= \left\{(y,x,q, M_1, M_2) \;\middle|\; Dy = Ax + Bq, \; M_1 = yy^T, \; M_2 = \begin{bmatrix}
            x \\ q
        \end{bmatrix}\begin{bmatrix}
            x \\ q
        \end{bmatrix}^T
        \right\}, \; \text{and}\\
        G &= \left\{(y,x,q, M_1, M_2) \;\middle|\;  
\begin{array}{l}
         Dy = Ax + Bq, \\
         D M_1 D^T = \begin{bmatrix}
             A & B
         \end{bmatrix} M_2 \begin{bmatrix}
             A^T \\ B^T
         \end{bmatrix},\\
         \begin{bmatrix}
             M_1 & y \\
             y^T & 1
         \end{bmatrix} \succeq 0,\;
         \begin{bmatrix}
          \multicolumn{2}{c}{\multirow{2}{*}{$M_2$}}& x\\
        \multicolumn{2}{c}{}&q\\
        x^T & q^T & 1
     \end{bmatrix} \succeq 0
    \end{array}
        \right\}.
    \end{align*}
    Then, $F \subseteq G$.
}
\end{proposition}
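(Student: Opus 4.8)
The plan is to take an arbitrary tuple $(y, x, q, M_1, M_2) \in F$ and verify that it satisfies each of the four defining conditions of $G$, since the inclusion $F \subseteq G$ is exactly the statement that membership in $F$ implies membership in $G$. The first condition, the affine equality $Dy = Ax + Bq$, is identical in the definitions of both sets, so it holds by assumption and requires no further work.

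For the lifted matrix equality, I would self-multiply the affine constraint. Because membership in $F$ supplies the exact rank-one relation $M_1 = yy^T$, I can write $D M_1 D^T = (Dy)(Dy)^T$. Substituting $Dy = Ax + Bq = \begin{bmatrix} A & B \end{bmatrix}\begin{bmatrix} x \\ q \end{bmatrix}$ and then invoking $M_2 = \begin{bmatrix} x \\ q \end{bmatrix}\begin{bmatrix} x \\ q \end{bmatrix}^T$, the right-hand side collapses to $\begin{bmatrix} A & B \end{bmatrix} M_2 \begin{bmatrix} A^T \\ B^T \end{bmatrix}$, which is precisely the required equation in $G$.

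For the two semidefinite constraints, I would appeal directly to the rank-one structure inherited from $F$. Since $M_1 = yy^T$, the block matrix $\begin{bmatrix} M_1 & y \\ y^T & 1 \end{bmatrix}$ equals the outer product $\begin{bmatrix} y \\ 1 \end{bmatrix}\begin{bmatrix} y^T & 1 \end{bmatrix}$ and is therefore positive semidefinite; equivalently, this is the Schur complement equivalence~\eqref{eqn:matvar_psdrelaxation} holding with equality rather than inequality. The identical argument applied to the stacked vector $\begin{bmatrix} x \\ q \end{bmatrix}$ together with $M_2$ establishes the second PSD constraint. Having checked all four conditions, I conclude $F \subseteq G$.

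I do not expect a genuine obstacle, because the inclusion is a one-directional relaxation in which every step is an exact substitution or an outer-product observation, not an inequality that must be fought for. The only point demanding mild care is keeping the block partition of the stacked vector $(x, q)$ consistent between the affine identity and its self-multiplied form, so that the factor $\begin{bmatrix} A & B \end{bmatrix}$ aligns correctly with the blocks of $M_2$.
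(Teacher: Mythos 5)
Your proposal is correct and follows essentially the same route as the paper's proof: self-multiply the affine constraint, substitute the rank-one relations $M_1 = yy^T$ and $M_2 = \begin{bmatrix} x \\ q \end{bmatrix}\begin{bmatrix} x \\ q \end{bmatrix}^T$ to obtain the lifted equality, and verify the two semidefinite blocks. The only cosmetic difference is that you certify the PSD constraints by writing each block matrix directly as an outer product, whereas the paper phrases the same fact as relaxing the equality to $M \succeq vv^T$ and invoking the Schur complement; these are interchangeable.
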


\paragraph{Element-wise maximum.}
For element-wise maximum steps~\eqref{eqn:generalmaxconstraints} of the form $y = \max\{x, l\}$, we can relax the constraints using Proposition~\ref{prop:max_relax}, that we prove in Appendix~\ref{apx:proof_max_relax}.

\begin{proposition}[element-wise maximum step]\label{prop:max_relax}
\reviewChanges{
    Let $F, G \subseteq \reals^d \times \reals^d \times \reals^d \times \symm^{3d \times 3d}$ be defined by
    \begin{align*}
        F &= \left\{(y, x, l, M) \;\middle|\; y = \max\{x, l\},~M = \begin{bmatrix}
            y \\ x \\ l
        \end{bmatrix}\begin{bmatrix}
            y \\ x \\ l
        \end{bmatrix}^T \right\}, \quad \text{and}\\
        G &= \left\{ (y, x, l, M) \;\middle|\;
        \begin{array}{l}
     y \geq x, \; y \geq l,\\
     \trace \left( \begin{bmatrix}
        I & -I/2 & -I/2 \\
        -I/2 & 0 & I/2 \\
        -I/2 & I/2 & 0
    \end{bmatrix}M\right) = 0,\\
    \begin{bmatrix}
    \multicolumn{3}{c}{}&y\\
    \multicolumn{3}{c}{\multirow{1}{*}{$M$}}& x\\
    \multicolumn{3}{c}{}&l\\
    y^T & x^T & l^T & 1
\end{bmatrix} \succeq 0.
\end{array}
        \right\}.
    \end{align*}
Then, $F \subseteq G$.
    }
\end{proposition}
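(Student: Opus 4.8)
The plan is to establish the inclusion $F \subseteq G$ pointwise. I fix an arbitrary tuple $(y,x,l,M) \in F$, so that $y = \max\{x,l\}$ holds coordinate-wise and $M = vv^T$ with $v = (y,x,l) \in \reals^{3d}$, and then verify that each of the three defining conditions of $G$ holds. The two inequalities $y \ge x$ and $y \ge l$ are immediate, since the element-wise maximum dominates each of its arguments coordinate by coordinate. The positive semidefiniteness of the bordered matrix is equally direct: because $M = vv^T$, the $(3d+1)\times(3d+1)$ matrix in the definition of $G$ equals $\begin{bmatrix} v \\ 1 \end{bmatrix}\begin{bmatrix} v \\ 1 \end{bmatrix}^T$, an outer product, hence positive semidefinite (indeed rank one).

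The substantive step is the trace identity. I would denote by $W$ the $3 \times 3$ block coefficient matrix (of $d \times d$ blocks) appearing inside the trace, and expand $\trace(WM)$ using the block structure of $M$, whose $(i,j)$ block is the outer product of the $i$-th and $j$-th components of $v$ (so $M_{11}=yy^T$, $M_{12}=yx^T$, $M_{13}=yl^T$, and so on). Collecting the nine contributions $\trace(W_{ij}M_{ji})$ and repeatedly using $\trace(uw^T)=w^Tu$, the diagonal blocks contribute $\|y\|^2$ (the $(2,2)$ and $(3,3)$ blocks of $W$ vanish), while the off-diagonal blocks produce the mixed inner products with their factors of $1/2$. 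After combining the symmetric pairs, the whole expression collapses to the scalar complementarity form $\trace(WM) = (y-l)^T(y-x)$.

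With this identity in hand, I invoke the exact characterization of the element-wise maximum in \eqref{eqn:generalmaxconstraints}: since $(y,x,l,M)\in F$ encodes $y=\max\{x,l\}$, the slackness condition $(y-l)^T(y-x)=0$ holds, and therefore $\trace(WM)=0$, which is precisely the trace constraint defining $G$. Having verified all three conditions, I conclude $(y,x,l,M)\in G$, and since the tuple was arbitrary, $F \subseteq G$.

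The main obstacle is purely the bookkeeping in the block-trace expansion: one must track carefully which block of $M$ multiplies which block of $W$ (the transpose convention in $\trace(W_{ij}M_{ji})$ matters) and ensure the factors of $1/2$ combine so that the cross terms reproduce exactly $(y-l)^T(y-x)$ rather than a different quadratic form. Everything else is routine verification.
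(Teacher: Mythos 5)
Your proposal is correct and follows essentially the same route as the paper's proof: both reduce the trace constraint to the complementarity identity $(y-l)^T(y-x)=0$ from~\eqref{eqn:generalmaxconstraints}, rewrite it as the quadratic form $v^TWv = \trace(WM)$ with $v=(y,x,l)$ via the cyclic property of the trace, and obtain the semidefinite condition from $M = vv^T$ (the paper phrases this last step as relaxing $M = vv^T$ to an inequality and applying the Schur complement, while you observe directly that the bordered matrix is the rank-one outer product $\begin{bmatrix} v \\ 1 \end{bmatrix}\begin{bmatrix} v \\ 1 \end{bmatrix}^T$, a cosmetic difference). No gaps.
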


All proximal steps considered in this work can be written in terms of these two core steps, as discussed in Section~\ref{sec:fixedpt_extrasteps}.
We summarize the proximal algorithm steps from Section~\ref{sec:fixedpt_extrasteps} and their corresponding relaxations in Table~\ref{tab:proxoper_constraints}.
\begin{table}
  \centering
  \small
  \caption{Proximal algorithm steps for solving~\acrlongpl{PQP}~\cite[\S 6]{parikh2013prox}.}
  \label{tab:proxoper_constraints}
\adjustbox{max width=\textwidth}{
\begin{tabular}{@{}llll@{}}
\toprule
$f$ & $z = \prox_{\lambda f}(v)$ & Constraints & Lifted Constraints\\
\midrule
$x^TAx + b^Tx$ & $z = (A + \lambda I)^{-1}(v - b)$ & $\;\;(A + \lambda I)z = v - b$ &
$\begin{array}{l}
     (A + \lambda I)z = v - b, \\
     (A + \lambda I) M_1 (A + \lambda I)^T = \begin{bmatrix}
         I & -I
     \end{bmatrix} M_2 \begin{bmatrix}
         I \\ -I
     \end{bmatrix},\\
     \begin{bmatrix}
         M_1 & z \\
         z^T & 1
     \end{bmatrix} \succeq 0,\;
     \begin{bmatrix}
      \multicolumn{2}{c}{\multirow{2}{*}{$M_2$}}& v\\
    \multicolumn{2}{c}{}&b\\
    v^T & b^T & 1
 \end{bmatrix} \succeq 0
\end{array}$\\
\midrule
$I_{\reals^n_{+}}$ & $z = (v)_{+}$ & $\begin{array}{l}z \ge 0,\; z \ge v,\\z^T(z-v) = 0\end{array}$ & $\begin{array}{l}z \geq 0,\; z \geq v,\\
\trace\left(\begin{bmatrix}
    I & -I/2 \\ -I/2 & 0
\end{bmatrix}M\right) = 0, \\
 \begin{bmatrix}
      \multicolumn{2}{c}{\multirow{2}{*}{$M$}}& z\\
    \multicolumn{2}{c}{}&v\\
    z^T & v^T & 1
 \end{bmatrix} \succeq 0
\end{array}$\\

\midrule
$I_{[l, u]}$ & $z = \min\left\{u, \max\left\{v, l \right\}\right\}$ & $\begin{array}{l}y \ge l,\; y \ge v,\\(y - l)^T(y-v) = 0\\z \le u,\; z \le y,\\(u - z)^T(y - z) = 0\end{array}$ & 
$\begin{array}{l} y \geq l,\; y \geq v,\; z \leq u,\; z\leq y,\\
\trace\left(\begin{bmatrix}
    I & -I/2 & -I/2 \\ -I/2 & 0 & I/2 \\ -I/2 & I/2 & 0
\end{bmatrix}M_1\right) = 0, \\
\trace\left(\begin{bmatrix}
    I & -I/2 & -I/2 \\ -I/2 & 0 & I/2 \\ -I/2 & I/2 & 0
\end{bmatrix}M_2\right) = 0,\\
\begin{bmatrix}
    \multicolumn{3}{c}{}&y\\
    \multicolumn{3}{c}{\multirow{1}{*}{$M_1$}}& v\\
    \multicolumn{3}{c}{}&l\\
    y^T & v^T & l^T & 1
\end{bmatrix} \succeq 0,
\begin{bmatrix}
    \multicolumn{3}{c}{}&z\\
    \multicolumn{3}{c}{\multirow{1}{*}{$M_2$}}& y\\
    \multicolumn{3}{c}{}&u\\
    z^T & y^T & u^T & 1
\end{bmatrix} \succeq 0
\end{array}$
\\

\midrule

$\|x\|_1$ & $z = \max\{v, \lambda\} - \max\{-v, \lambda\}$ & 
$\begin{array}{l}y \ge \lambda,\; y \ge v\\
    (y - \lambda)^T(y - v) = 0\\
    w \ge \lambda,\; w \ge -v\\
    (w - \lambda)^T(w + v) = 0\\
    z = y - w\end{array}$ & 
    
    $\begin{array}{l} y \geq \lambda,\; y \geq v,\; w \geq \lambda,\; w \geq -v,\;z = y - w,\\
    \trace\left(\begin{bmatrix}
        I & -I/2 & -(1/2)\ones \\ -I/2 & 0 & (1/2)\ones \\ -(1/2)\ones^T & (1/2)\ones^T & 0
    \end{bmatrix}M_1\right) = 0, \\
    \trace\left(\begin{bmatrix}
         I & I/2 & -(1/2)\ones \\ I/2 & 0 & -(1/2)\ones \\ -(1/2)\ones^T & -(1/2)\ones^T & 0
    \end{bmatrix}M_2\right) = 0,\\
    M_3 = \begin{bmatrix}
        I & -I
    \end{bmatrix} M_4 \begin{bmatrix}
        I \\ 
        -I
    \end{bmatrix},\\
    \begin{bmatrix}
        \multicolumn{3}{c}{}&y\\
        \multicolumn{3}{c}{\multirow{1}{*}{$M_1$}}& v\\
        \multicolumn{3}{c}{}&\lambda\\
        y^T & v^T & \lambda & 1
    \end{bmatrix} \succeq 0,
    \begin{bmatrix}
        \multicolumn{3}{c}{}&w\\
        \multicolumn{3}{c}{\multirow{1}{*}{$M_2$}}& v\\
        \multicolumn{3}{c}{}&\lambda\\
        w^T & v^T & \lambda & 1
    \end{bmatrix} \succeq 0\\
    \begin{bmatrix}
             M_3 & z \\
             z^T & 1
         \end{bmatrix} \succeq 0,\; \begin{bmatrix}
        \multicolumn{3}{c}{}&z\\
        \multicolumn{3}{c}{\multirow{1}{*}{$M_4$}}& y\\
        \multicolumn{3}{c}{}&w\\
        z^T & y^T & w^T & 1
    \end{bmatrix} \succeq 0
    \end{array}$
    \\

\bottomrule
\end{tabular}}
\end{table}

\subsection{Initial iterate and parameter sets}
While the sets in Section~\ref{sec:verifyprob_initsets} are convex, we now express them in terms of the lifted \gls{SDP} variables by introducing additional constraints to obtain tight \gls{SDP} relaxations for problem~\eqref{prob:general_verifyprob}.

\paragraph{Hypercubes.}
For hypercubes, we derive constraints using Proposition~\ref{prop:hypercube_relax}, whose proof appears in Appendix~\ref{apx:proof_hypercube_relax}.

\begin{proposition}[hypercubes]\label{prop:hypercube_relax}
    Consider vectors~$l, u \in \reals^d$.
\reviewChanges{
    Let $F, G \subseteq \reals^d \times \symm^{d \times d}$ be defined by
    \begin{align*}
        F &= \{ (z, M) \mid l \leq z \leq u, \; M = zz^T \},\quad \text{and}\\
G &= \left\{ (z, M) \;\middle\vert\; 
\begin{array}{ll}
            & uz^T - ul^T - M + zl^T \ge 0,\\
            & u u^T - u z^T - z u^T + M \ge 0,\\
            & M - z l^T - lz^T + ll^T \ge 0,
            \end{array}
        \quad \begin{bmatrix}
            M & z \\
            z^T & 1
        \end{bmatrix} \succeq 0
        \right\},
    \end{align*}
    \reviewChanges{
    where the matrix inequalities $\ge$ are taken elementwise.}
    Then, $F \subseteq G$.
    }
\end{proposition}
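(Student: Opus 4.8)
The plan is to show the forward inclusion pointwise: take an arbitrary $(z, M) \in F$, so that $l \le z \le u$ elementwise and $M = zz^T$, and verify that it satisfies every constraint defining $G$. The key observation is that the three elementwise inequalities defining $G$ are exactly the linearized pairwise products of the two bound constraints $u - z \ge 0$ and $z - l \ge 0$; this is the reformulation--linearization idea, and once the correct factorizations are identified the verification is immediate.

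First I would record that $l \le z \le u$ is equivalent to the two nonnegativity conditions $u - z \ge 0$ and $z - l \ge 0$ (both elementwise). Then, substituting $M = zz^T$ into the left-hand side of each inequality, I would exhibit the factorizations
\begin{align*}
uz^T - ul^T - M + zl^T &= (u - z)(z - l)^T,\\
uu^T - uz^T - zu^T + M &= (u - z)(u - z)^T,\\
M - zl^T - lz^T + ll^T &= (z - l)(z - l)^T.
\end{align*}
Each right-hand side is an outer product of two elementwise-nonnegative vectors, so the $(i,j)$ entry of the first is $(u_i - z_i)(z_j - l_j) \ge 0$, and analogously for the second and third. This establishes the three elementwise inequalities in $G$.

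Next I would handle the positive semidefinite block constraint. Substituting $M = zz^T$ gives
\begin{equation*}
\begin{bmatrix} M & z \\ z^T & 1 \end{bmatrix} = \begin{bmatrix} zz^T & z \\ z^T & 1 \end{bmatrix} = \begin{bmatrix} z \\ 1 \end{bmatrix}\begin{bmatrix} z^T & 1 \end{bmatrix} \succeq 0,
\end{equation*}
since it is a rank-one outer product. (Equivalently, this follows from the Schur complement as in equation~\eqref{eqn:matvar_psdrelaxation}.) Having verified all defining conditions of $G$, I would conclude $(z, M) \in G$, and since $(z, M) \in F$ was arbitrary, $F \subseteq G$.

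I do not expect a genuine obstacle here: the entire argument is the routine substitution $M = zz^T$ followed by recognizing the outer-product structure. The only point requiring minor care is bookkeeping the signs and cross-terms in the three factorizations so that they match the stated left-hand sides exactly; everything else is an immediate consequence of the elementwise nonnegativity of $u - z$ and $z - l$.
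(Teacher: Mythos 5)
Your proof is correct and follows essentially the same route as the paper's: both rewrite the bounds as $u - z \ge 0$, $z - l \ge 0$, obtain the three elementwise inequalities by cross-multiplying and self-multiplying these nonnegative vectors, and substitute $M = zz^T$ to handle the semidefinite block (you via the rank-one outer product $\begin{bmatrix} z \\ 1 \end{bmatrix}\begin{bmatrix} z^T & 1 \end{bmatrix}$, the paper via the equivalent Schur-complement argument of equation~\eqref{eqn:matvar_psdrelaxation}). No gaps; the factorizations you exhibit match the stated left-hand sides exactly.
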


\paragraph{Polyhedra.}
For polyhedral constraints, we derive constraints using Proposition~\ref{prop:polyhedra_relax}, which we prove in Appendix~\ref{apx:proof_polyhedra_relax}.
\begin{proposition}[polyhedra]\label{prop:polyhedra_relax}
    Consider $A \in \reals^{m \times d}$, $b \in \reals^m$.
\reviewChanges{
    Let $F, G \subseteq \reals^d \times \reals^m \times \symm^{(d + m) \times (d + m)}$ be defined by
    \begin{align*}
        F &= \left\{(z, s, M) \mid Az + s = b, \; s \ge 0, \; M = \begin{bmatrix} z \\ s \end{bmatrix} \begin{bmatrix} z \\ s \end{bmatrix}^T \right\},\quad \text{and}\\
        G &= \left\{ 
            (z, s, M) \;\middle\vert\; 
        \begin{array}{l}
             Az + s = b, \; s \geq 0  \\
             \begin{bmatrix}
                A & I
            \end{bmatrix} M \begin{bmatrix}
                A^T \\ I
            \end{bmatrix} = bb^T, \quad
            \begin{bmatrix}
      \multicolumn{2}{c}{\multirow{2}{*}{$M$}}& z\\
            \multicolumn{2}{c}{}&s\\
            z^T & s^T & 1
         \end{bmatrix} \succeq 0, 
\end{array}
        \right\}.
    \end{align*}
Then, $F \subseteq G$.
    }
\end{proposition}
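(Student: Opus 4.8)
The plan is to prove the set inclusion $F \subseteq G$ directly: I would fix an arbitrary triple $(z, s, M) \in F$ and verify that it satisfies each of the four defining constraints of $G$ in turn. Since $F$ and $G$ share the linear constraints $Az + s = b$ and $s \ge 0$ verbatim, those two carry over immediately, and the real work reduces to establishing the quadratic equality and the positive semidefinite constraint.

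Writing $w = \begin{bmatrix} z \\ s \end{bmatrix}$, membership in $F$ gives $M = ww^T$. The key observation is that the stacked matrix $\begin{bmatrix} A & I \end{bmatrix}$ sends $w$ exactly to $Az + s = b$. I would then self-multiply this identity, in the same spirit as the affine-step argument of Proposition~\ref{prop:affine_relax}:
$$\begin{bmatrix} A & I \end{bmatrix} M \begin{bmatrix} A^T \\ I \end{bmatrix} = \begin{bmatrix} A & I \end{bmatrix} w w^T \begin{bmatrix} A^T \\ I \end{bmatrix} = \left(\begin{bmatrix} A & I \end{bmatrix} w\right)\left(\begin{bmatrix} A & I \end{bmatrix} w\right)^T = b b^T,$$
which is precisely the quadratic equality defining $G$.

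For the positive semidefinite constraint, I would invoke the Schur complement characterization already recorded in equation~\eqref{eqn:matvar_psdrelaxation}: for any vector $w$ and symmetric matrix $M$, one has $M \succeq w w^T$ if and only if $\begin{bmatrix} M & w \\ w^T & 1 \end{bmatrix} \succeq 0$. Since $M = ww^T$ yields $M \succeq w w^T$ with equality, the corresponding block matrix is positive semidefinite, establishing the last constraint of $G$. Collecting the four verified constraints shows $(z, s, M) \in G$, and hence $F \subseteq G$.

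I do not expect a genuine obstacle here, since the argument is a direct verification structurally identical to the affine relaxation of Proposition~\ref{prop:affine_relax}. The only mild subtlety is the dimensional bookkeeping: one must keep careful track of the block structure of $\begin{bmatrix} A & I \end{bmatrix}$ and of the stacked variable $w = (z, s)$ so that the shapes in the self-multiplication and in the Schur complement line up correctly.
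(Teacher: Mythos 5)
Your proof is correct and follows essentially the same route as the paper's: self-multiplying the equality constraint $Az + s = b$ to obtain the lifted quadratic identity, then using $M = ww^T \succeq ww^T$ with the Schur complement characterization~\eqref{eqn:matvar_psdrelaxation} to get the positive semidefinite block constraint. Your version is if anything slightly more explicit than the paper's, which simply cites the analogy with Proposition~\ref{prop:affine_relax} and compresses the same two steps.
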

\paragraph{$\ell_p$-ball.} We handle the cases where $p = 1,2,\infty$ separately as each one requires different types of constraints. 
For $p=1$, we introduce an auxiliary variable $u \in \reals^d$ to encode~${\|z^0 - c\|_1 \leq r}$ with the following linear constraints,
\begin{equation*}
    z^0 - c \leq u, \quad -z^0 + c \leq u, \quad \ones^T u \leq r.
\end{equation*}
By considering the stacked variable~$(z^0,u)$, we obtain a polyhedral constraint with
\begin{equation*}
    A = \begin{bmatrix}
        I & -I \\
        -I & -I \\
        0 & \ones^T
    \end{bmatrix}, \quad
    b = \begin{bmatrix}
        c \\
        -c \\
        r
    \end{bmatrix}.
\end{equation*}
which we can reformulate with Proposition~\ref{prop:polyhedra_relax}.
The case of $p=\infty$ corresponds to a hypercube with $l = c - r\ones$ and $u = c + r\ones$, which we can reformulate with Proposition~\ref{prop:hypercube_relax}.
For $p = 2$, we apply Proposition~\ref{prop:l2ball_relax}, which is proven in Appendix~\ref{apx:proof_l2_ball_relax}.
\begin{proposition}[$\ell_2$-ball]\label{prop:l2ball_relax}
    Consider $c \in \reals^d$ and $r \in \reals$.
\reviewChanges{
    Let $F, G \subseteq \reals^d \times \symm^{2d \times 2d}$ be defined by
    \begin{align*}
        F &= \left\{(z, M) \mid \| z - c\| \leq r, \quad M = zz^T \right\},\quad \text{and} \\
        G &= \left\{ (z, M) \;\middle\vert\;
        \begin{array}{l}
     \trace(M) - 2c^T z + c^T c \leq r^2, \quad 
\begin{bmatrix}
        M & z \\
        z^T & 1
    \end{bmatrix} \succeq 0
\end{array}
        \right\}.
    \end{align*}
Then, $F \subseteq G$.
    }
\end{proposition}

\subsection{Tightening the relaxation}\label{subsec:tightening_sdp}
\reviewChanges{To tighten the relaxation, we use the \gls{RLT}~\cite{sherali1995reformulationconvexification,sherali2010rlt,anstreicher2009sdprlt} to introduce additional inequalites.}
The main idea is to combine known bounds on the any problem variable to form valid inequalities involving the new matrix variables.
Consider a variable $z \in \reals^d$ and its upper bound $\overline{z}$ and lower bound $\underline{z}$. 
Analogously to Proposition~\ref{prop:hypercube_relax}, we can rearrange the bound constraints as follows
\begin{equation*}
    \underline{z} \le z \le \overline{z} \quad \iff \quad 
    \begin{array}{ll}
        &(\overline{z} - z)(\overline{z} - z)^T \ge 0\\
        &(z - \underline{z})(z - \underline{z})^T \ge 0\\
        &(\overline{z} - z)(z - \underline{z})^T \ge 0
    \end{array}
\end{equation*}
where we cross-multiplied the lower and upper bound inequalities.
By replacing products $zz^T$ with the symmetric positive semidefinite matrix $M \in \symm^{d \times d}_{+}$, we obtain constraints
\begin{equation}
    \begin{array}{ll}\label{eq:rltderivation}
    &\overline{z} \overline{z}^T - \overline{z} z^T - z \overline{z}^T + M \ge 0\\
    & M - z \underline{z}^T - \underline{z}z^T + \underline{z}\underline{z}^T \ge 0\\
    & \overline{z}z^T - \overline{z}\underline{z}^T - M + z \underline{z}^T \ge 0\\
    & M \succeq 0,
    \end{array}
\end{equation}
which we directly include in our relaxation.
\reviewChanges{
Note that the original constraints $\underline{z} \le z \le \overline{z}$ are implied by the set of constraints~\eqref{eq:rltderivation} \cite[Proposition 1]{sherali1995reformulationconvexification}.
}
By knowing the upper and lower bounds on the problem variables, this procedure allows us to tighten the relaxation in terms of the lifted variables.

\paragraph{Bound propagation.}
The \gls{RLT} requires lower and upper bounds on every variable in order to construct tightening inequalities.
First, we use the initial sets $Z, \Theta$ to derive bounds on the initial iterate $z^0$ and parameter $\theta$.
When the sets are either hypercubes or $\ell_p$-ball sets, the initial bounds can be efficiently computed.
In the hypercube case, $Z = \{z \mid l \leq z \leq u\}$, the initial bounds are given by $\underline{z} = l,~\overline{z} = u$.
In the $\ell_p$-ball case, $Z = \{z \mid \left\| z - c \right\|_p \leq r \}$, the initial bounds are given by
$\underline{z} = c - r,~\overline{z} = c + r$.

We need to efficiently propagate these initial bounds across iterate steps.
\begin{itemize}
\item {\bf Affine steps.} First consider an affine step $Dy = Ax + B q(\theta)$ with known lower bounds $\underline{x}, \underline{q}(\theta)$ and upper bounds $\overline{x}, \overline{q}(\theta)$.
To propagate these bounds on $y$, we use the invertibility assumption of $D$ and write
\begin{equation}\label{eq:direct_affine_step}
    y = \tilde{A} x + \tilde{B} q(\theta),
\end{equation}
where we compute $\tilde{A} = D^{-1}A$ and $\tilde{B} = D^{-1}B$ using a factorization of matrix $D$.
We now bound the terms in equation~\eqref{eq:direct_affine_step} individually and then sum the bounds together as commonly done in {\it interval arithmetic}~\cite{gowal2018boundprop}.
Given bounds on $x$ denoted by $\underline{x}, \overline{x}$, the bounds for $\tilde{A}x$ are computed via \cite[Section 3]{gowal2018boundprop}:
\begin{equation}\label{eq:standard_affine_boundprop}
\begin{array}{ll}
\tilde{A}x &\le (1/2)\left(\tilde{A} (\overline{x} + \underline{x}) + |\tilde{A}| (\overline{x} - \underline{x}) \right)\\
    \tilde{A}x &\ge (1/2)\left(\tilde{A} (\overline{x} + \underline{x})- |\tilde{A}| (\overline{x} - \underline{x}) \right),
\end{array}
\end{equation}
where $|\tilde{A}|$ is the element-wise absolute value of $\tilde{A}$.

If $x$ lives in an $\ell_p$-ball of radius $r$ centered at $c$, then we can construct tighter bounds on each component of $\tilde{A}x$ as follows \cite[Section 3.2]{xu2020automatic}:
\begin{equation}\label{eq:lp_affine_boundprop}
\begin{array}{ll}
    (\tilde{A}x)_i \le r \left\| \tilde{A}_i^T \right\|_q + \tilde{A}_i^T c\\
    (\tilde{A}x)_i \ge -r \left\| \tilde{A}_i^T \right\|_q + \tilde{A}_i^T c,
\end{array}
\end{equation}
for $i=1,\dots,d$. Here, where $\tilde{A}_i^T$ is the $i$-th row of $\tilde{A}$ and $q$ is the dual norm of $p$ satisfying $1/p + 1/q = 1$.
We can use~\eqref{eq:standard_affine_boundprop} or $\eqref{eq:lp_affine_boundprop}$ to bound $\tilde{B} q(\theta)$ in a similar way.

\item {\bf Element-wise maximum steps.} 
Consider an element-wise maximum step given by $y = \max\{x, l(\theta)\}$.
With known lower bounds $\underline{x}, \underline{l(\theta)}$ and upper bounds $\overline{x}, \overline{l(\theta)}$, we can compute bounds on $y$.
Since the $\max$ function is element-wise nondecreasing, the bounds propagate as \cite[Section 3]{gowal2018boundprop}:
\begin{equation}\label{eq:max_boundprop}
    \underline{y} = \max\{\underline{x}, \underline{l(\theta)}\}, \quad \overline{y} = \max\{\overline{x}, \overline{l(\theta)}\}.
\end{equation}
\end{itemize}

In the verification problem, we use the techniques described by~\eqref{eq:standard_affine_boundprop},~\eqref{eq:lp_affine_boundprop}, and~\eqref{eq:max_boundprop} to derive lower and upper bounds on every variable across all iterations $k$.
We use these bounds to form~\gls{RLT} inequalities and tighten the relaxation.

\paragraph{Triangle relaxation.}
In addition to the variable bounds with \gls{RLT}, we also compute further bounds for the element-wise maximum steps.
Consider an element-wise maximum step $y = \max\{x, l(\theta)\}$, $y, x, l(\theta) \in \reals^d$, with precomputed lower bounds $\underline{y}, \underline{x}$ and upper bounds $\overline{y}, \overline{x}$.
Note that the bounds on $l(\theta)$ enter through the precomputed bounds on $y$ as shown in~\eqref{eq:max_boundprop}.
We can introduce another valid inequality with the so-called \emph{triangle} relaxation~\cite{ehlers2017planet}:
\begin{equation}\label{eqn:planet_ineq}
    y \leq \frac{\overline{y} - \underline{y}}{\overline{x} - \underline{x}}(x-\underline{x}) + \underline{y},
\end{equation}
For simplicity, we rewrite~\eqref{eqn:planet_ineq} as
\begin{equation}\label{eqn:planet_vec}
    y \leq Ex + c,
\end{equation}
for diagonal matrix $E$ and vector $c$ defined as
\begin{equation*}
    E_{ii} = \frac{\overline{y}_i - \underline{y}_i}{\overline{x}_i - \underline{x}_i}, \quad c_i = -\frac{\overline{y}_i - \underline{y}_i}{\overline{x}_i - \underline{x}_i}\underline{x}_i + \underline{y}_i,
\end{equation*}
for $i=1,\dots,d$.
To incorporate the triangle relaxation into our relaxed problem, we use a similar technique to the~\gls{RLT} inequalities.
That is, we multiply each inequality in~\eqref{eqn:planet_vec} by $\overline{x} - x \geq 0$ to obtain bounds on the matrix variables.
Explicitly,
\begin{equation}\label{eq:planet_rlt_exact}
    (Ex + c - y)(\overline{x} - x)^T \geq 0 \implies Ex \overline{x}^T - Exx^T + c\overline{x}^T - cx^T -y\overline{x}^T + yx^T \geq 0.
\end{equation}
In Proposition~\ref{prop:max_relax}, we introduce a lifted matrix variable $M$ for the element-wise maximum steps.
We can rewrite the inequalities from~\eqref{eq:planet_rlt_exact} in terms of this same $M$.
The nonnegative expression in~\eqref{eq:planet_rlt_exact} is a $d \times d$ matrix, so we show the constraint corresponding to the $ij$-th element.
In terms of $M$ from Proposition~\ref{prop:max_relax}, the inequalities can be written as
\begin{equation}\label{eq:planet_rlt_lifted}
    (\overline{x}_j E_i)^T x + c_i \overline{x}_j  - c_i x_j - \overline{x}_j y_i + \trace\left(\begin{bmatrix}
        0 & e_i e_j^T & 0 \\
        0 & -E_{ii} e_i e_j^T & 0 \\
        0 & 0 & 0
    \end{bmatrix} M \right) \geq 0, \quad i,j = 1,\dots, d,
\end{equation}
where $e_i \in \reals^d$ equals 1 at index $i$ and 0 elsewhere.
\reviewChanges{
The inequalities in~\eqref{eq:planet_rlt_lifted} also show that we can reuse the lifted matrix variables~$M$ across different triangle relaxations. 
We discuss this idea of reusing matrix variables further with a complete~\gls{SDP} formulation in Appendix~\ref{apx:sdp_coupling}.
}

\section{Numerical examples}\label{sec:experiments}
For every experiment, we compare three main metrics:
\begin{itemize}
    \item \textbf{Verification problem SDP objective} ($\text{VPSDP}$).
    We solve the SDP relaxation of the verification problem, denoted VPSDP, to compute the upper bound $\text{VPSDP}$ on the worst-case fixed-point residual over the variations of initial iterates and problem parameters.
We apply the \reviewChanges{\gls{RLT}, triangle relaxation, and reuse matrix variables as} discussed in Section~\ref{subsec:tightening_sdp} and Appendix~\ref{apx:sdp_coupling}.
    \item \textbf{PEP objective} ($\text{PEP}$).
    We compute the worst-case objective from the PEP framework, denoted as $\text{PEP}$. To apply the PEP framework, we need to provide an upper bound $R$ on the initial distance to optimality, \ie, $\lVert z^0 - z^\star\rVert^2 \le R$. However, as discussed in Section~\ref{sec:PEPdiff}, for every $\theta$ we have a different $z^\star(\theta)$ and it is not obvious which upper bound to provide.    
To estimate $R$, we consider $N=10,000$ problem instances and initial iterates by sampling $\{(z^0_i,\theta_i)\}_{i=1}^N$ uniformly from the set $Z \times \Theta$.
    For each instance, we solve the \gls{PQP} to optimality to find $z^\star(\theta_i)$ and compute $R = \max_{i=1,\dots,N}\lVert z^0_i - z^\star(\theta_i)\rVert$.
Similarly to our approach in Section~\ref{sec:PEPdiff}, we solve the PEP SDPs by encoding the different algorithms using the PEPit toolbox \cite{goujad2022pepit}.
    Specifically, we compute smoothness and strong convexity parameters for the class of quadratic functions~\cite{bousselmi2022pepqp,bousselmi2023pep_linop}.
\item \textbf{Sample maximum objective.} ($\text{SM}$)
    For each of the $N$ sampled problems, we also run the first-order method and compute the fixed-point residual for $k=1,\dots, K$ steps.
    We then compute the sample maximum fixed-point residual for each $k$ and report this as the sample maximum worst-case objective $\text{SM}$.
    This value serves as a lower bound to both the VPSDP and PEP objectives.
\end{itemize}
We also report:
\begin{itemize}
    \item \textbf{Computation times for solving VPSDP.}
    We report the solve time in seconds for each VPSDP.
    \item \textbf{Improvement over PEP.}
We measure the improvement of over PEP, we report the ratio between its objective $\text{PEP}$ and the objective of our relaxation $\text{VPSDP}$ and the lower bound obtained from the sample maximum $\text{SM}$:
    \begin{equation}
        \frac{\text{PEP}}{\text{VPSDP}} \quad \text{and}\quad \frac{\text{PEP}}{\text{SM}}.
    \end{equation}
\end{itemize}
We present full numerical results for every experiment in Appendix~\ref{apx:fullresulttables}.

\paragraph{Computational scalability.}
We use one of our experiments to show the difficulty in solving the nonconvex verification problem exactly.
In Section~\ref{sec:lasso_exp}, we set up and solve the nonconvex verification problem as a \gls{QCQP} with a fixed time limit.
In doing so, we apply bound propagation techniques to construct lower and upper bounds on each variable.
After the time limit, we report the best lower and upper bound on the optimal objective.

\paragraph{Software.}
All examples are written in Python 3.10. 
To solve the \glspl{SDP}, we use MOSEK version 10.1, specifically through the MOSEK Fusion API for Python~\cite{mosek}.
We set the primal feasibility, dual feasibility, and duality gap tolerances all as $10^{-7}$.
To compare our verification problem with PEP, we use the PEPit toolbox~\cite{goujad2022pepit}, which interfaces with MOSEK through CVXPY~\cite{diamond2016cvxpy,agrawal2018rewriting}.
To solve the nonconvex verification problem exactly in Section~\ref{sec:lasso_exp}, we use Gurobi version 10.0 \cite{gurobi}.
All computations were run on the Princeton HPC Della Cluster with 16 CPU cores.
The code for all experiments can be found at 
\begin{center}
\url{https://github.com/stellatogrp/sdp_algo_verify}.
\end{center}

\subsection{Nonnegative least squares}\label{subsec:nnls}
For a matrix $A \in \reals^{m\times n}$ with $m \geq n$ and a parameterized vector $b(\theta) \in \reals^m$, the \gls{NNLS} problem is defined as:
\begin{equation}\label{prob:NNLS}
	\begin{array}{ll}
		\text{minimize} & (1/2)\lVert Ax - b(\theta) \rVert^2 \\
		\text{subject to} & x \geq 0.
	\end{array}
\end{equation}
We analyze projected gradient descent with step size $t > 0$, presented in Algorithm~\ref{alg:proj_gd}.
\begin{algorithm}
  \caption{Projected gradient descent for the \gls{NNLS} problem~\eqref{prob:NNLS}.} 
	\label{alg:proj_gd}
  \begin{algorithmic}[1]
      \State {\bf Given} step size $t > 0$, problem data $(A, b(\theta))$, $K$ number of iterations, initial iterate $z^0$
      \For{$k=0, \dots, K-1$}
        \State $y^{k+1} = (I-tA^TA)z^k + tA^T b(\theta)$
        \State $z^{k+1} = (y^{k+1})_+$
      \EndFor
      \State \textbf{return} $z^K$
  \end{algorithmic}
\end{algorithm}
To choose the step size, we consider the minimum and maximum eigenvalues of $A^TA$, denoted $\mu$ and $L$, respectively.
It is well-known that, for $t < 2/L$, the algorithm converges as the gradient step is a contractive operator~\cite[Chapter 9]{boyd2004convex}\cite[Chapter 2]{lecturesnesterov} and the projection is nonexpansive~\cite[Section 5.1]{ryu2016monotoneprimer}~\cite[Section 2.3]{parikh2013prox}.
Furthermore, 
when $\mu > 0$, choosing $t^\star = 2/(\mu + L)$ is optimal in terms of minimizing the worst-case convergence rate 
(Proposition~\ref{prop:optimal_t}).

We consider two instances where the objective is nonstrongly convex and strongly convex.
First, we consider fixed step sizes across $K$ iterations.
Specifically, we grid 5 different step sizes in the range $[1/L, 2/L]$, and for the strongly convex case, we additionally include $t^\star = 2 / (\mu + L)$.
Second, we consider the silver step size schedule, a method that aims to accelerate gradient descent via carefully selected, non-constant, step sizes~\cite{altschuler2023acceleration,altschuler2023acceleration2}.
The silver step size schedule for strongly convex problems appears in~\cite[Section 3]{altschuler2023acceleration}, and for nonstrongly convex problems appears in~\cite[Section 2]{altschuler2023acceleration2}.
\reviewChanges{
\paragraph{Full \gls{SDP} example.} To demonstrate an example overall construction, we show the full VPSDP for the \gls{NNLS} example in Appendix~\ref{apx:sdp_coupling} for $K=2$ iterations.
Through this example, we show the composition of multiple steps by deriving a complete~\gls{SDP} formulation.
}
\paragraph{Problem setup.}
We randomly generate $A \in \reals^{60 \times 40}$ such that $\mu=0$, $L=100$ for the nonstrongly convex case and $\mu=20$, $L=100$ for the strongly convex case.
We choose $b(\theta)$ to be an $\ell_2$-ball of radius $0.5$ centered at $c$, where $c_i = 30, ~i = 1,\dots, 30$ and 0 otherwise.
For each candidate step size schedule, we run the verification problem with initial iterate set~$Z = \{0\}$.
We run the fixed step size experiments up to $K=10$, and we run the silver step size experiments up to $K=7$ as the recursive definition reaches the best performance when $K$ is one less than a power of two.

\begin{figure}[h]
\centering
    \includegraphics[width=0.8\textwidth]{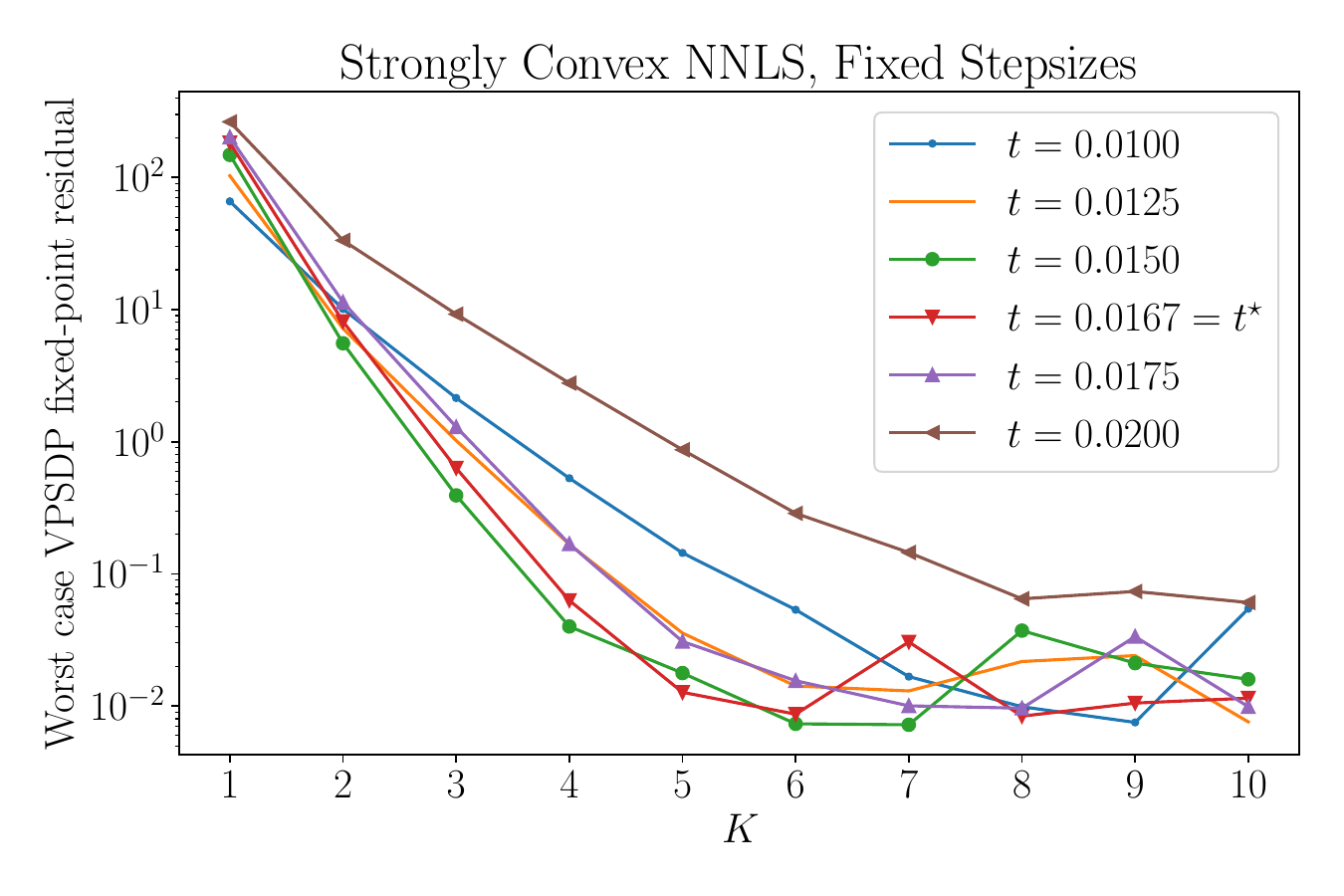}
    \caption{Results for the strongly convex NNLS instance with $\mu=20, L=100$, for fixed step sizes. Full results are in Tables~\ref{tab:nnls_fixed_pt1} and~\ref{tab:nnls_fixed_pt2}.}
    \label{fig:strong_NNLS_gridt}
\end{figure}

\begin{figure}
\centering
    \includegraphics[width=0.8\textwidth]{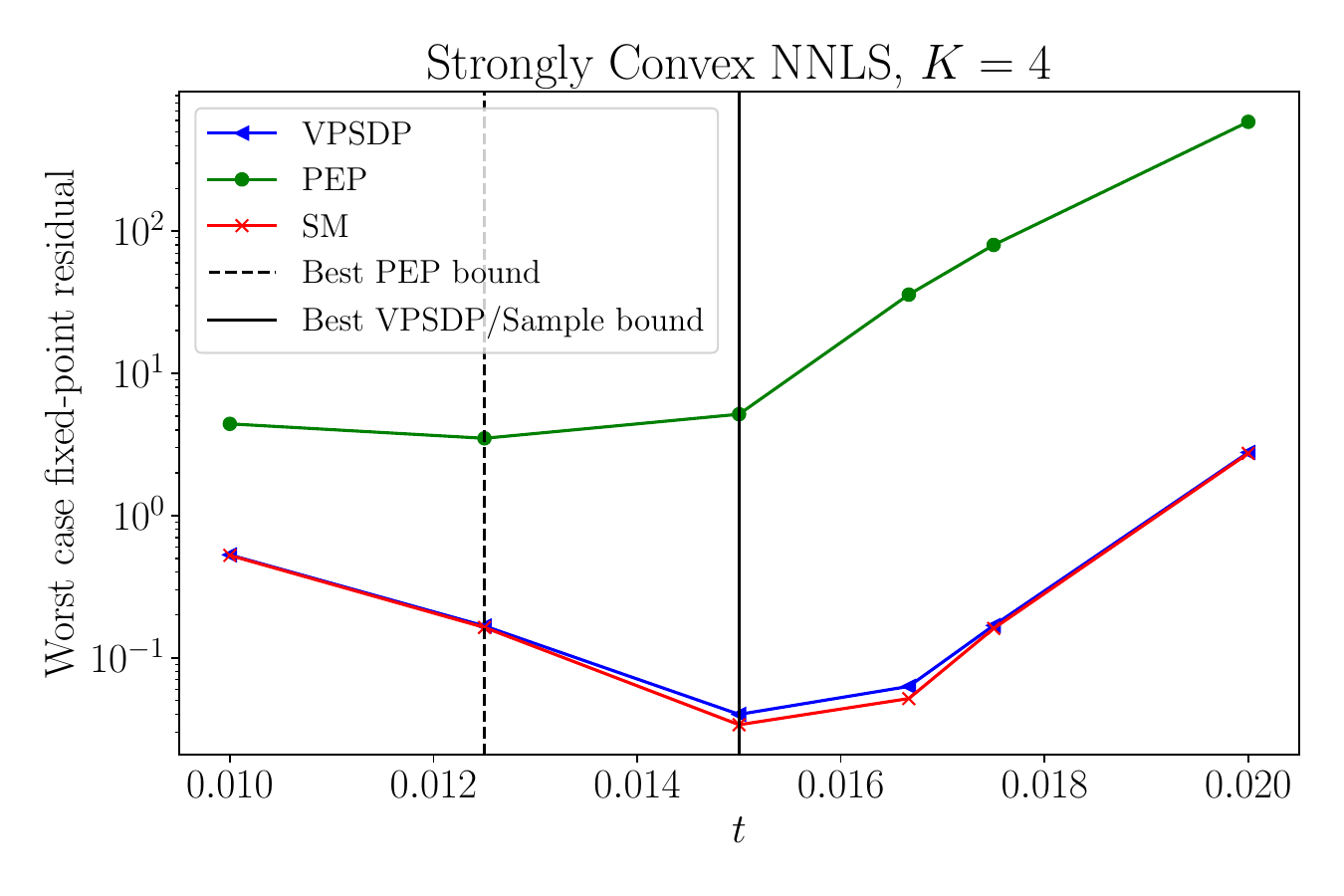}
    \caption{Results to show the effect of $t$ at $K=4$ for the same strongly convex instance as in Figure~\ref{fig:strong_NNLS_gridt}. There is a discrepancy between the PEP and VPSDP bounds on the $t$ that leads to the smallest residual.}
    \label{fig:NNLS_fixed_K4}
\end{figure}

\begin{figure}
\centering
    \includegraphics[width=0.8\textwidth]{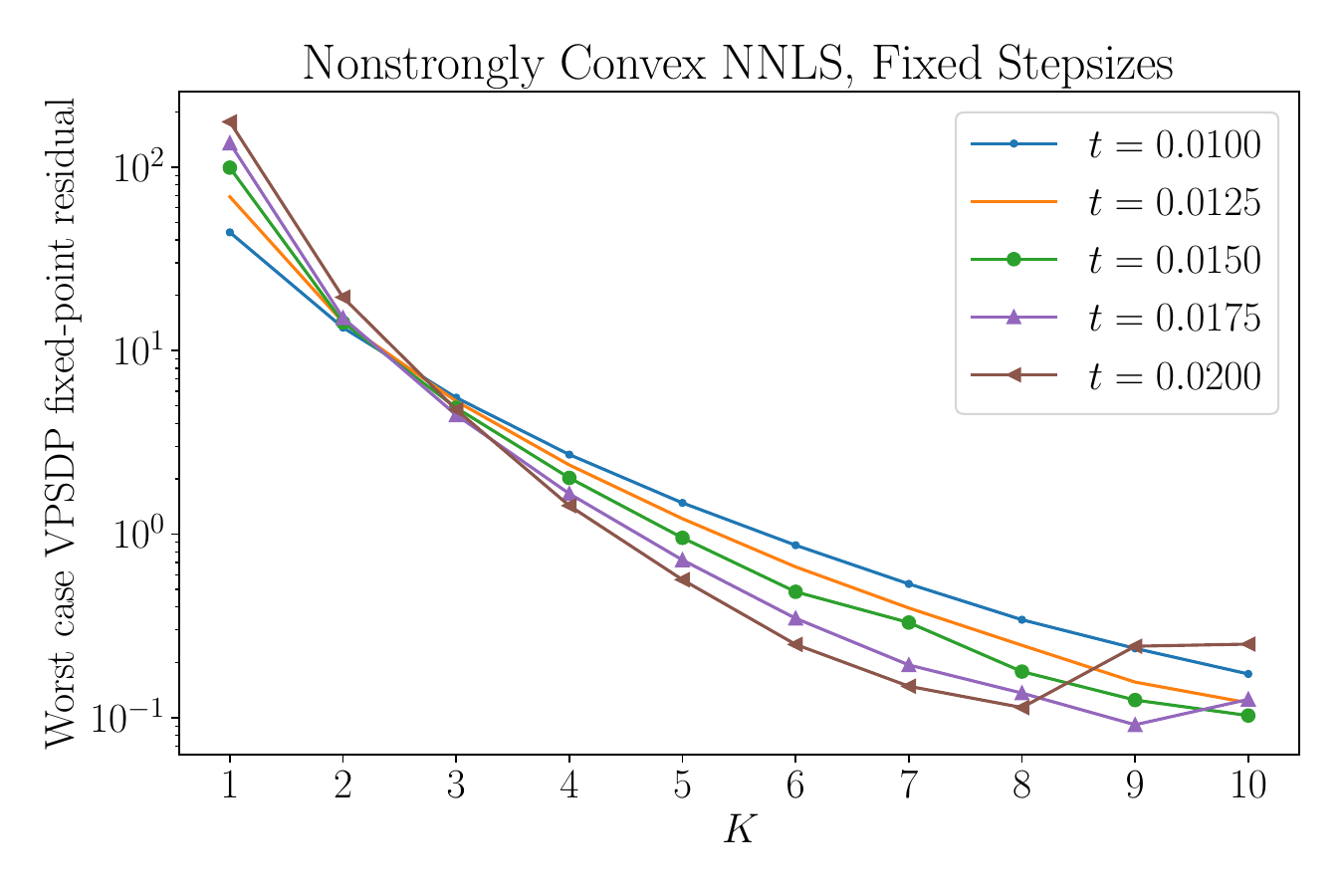}
    \caption{Results for the nonstrongly convex NNLS instance $L=100$ for fixed step sizes. Full results are in Tables~\ref{tab:nnls_nonstrong_fixed_pt1} and~\ref{tab:nnls_nonstrong_fixed_pt2}.}
    \label{fig:nonstrong_NNLS_gridt}
\end{figure}

\begin{figure}
\centering
    \includegraphics[width=0.7\textwidth]{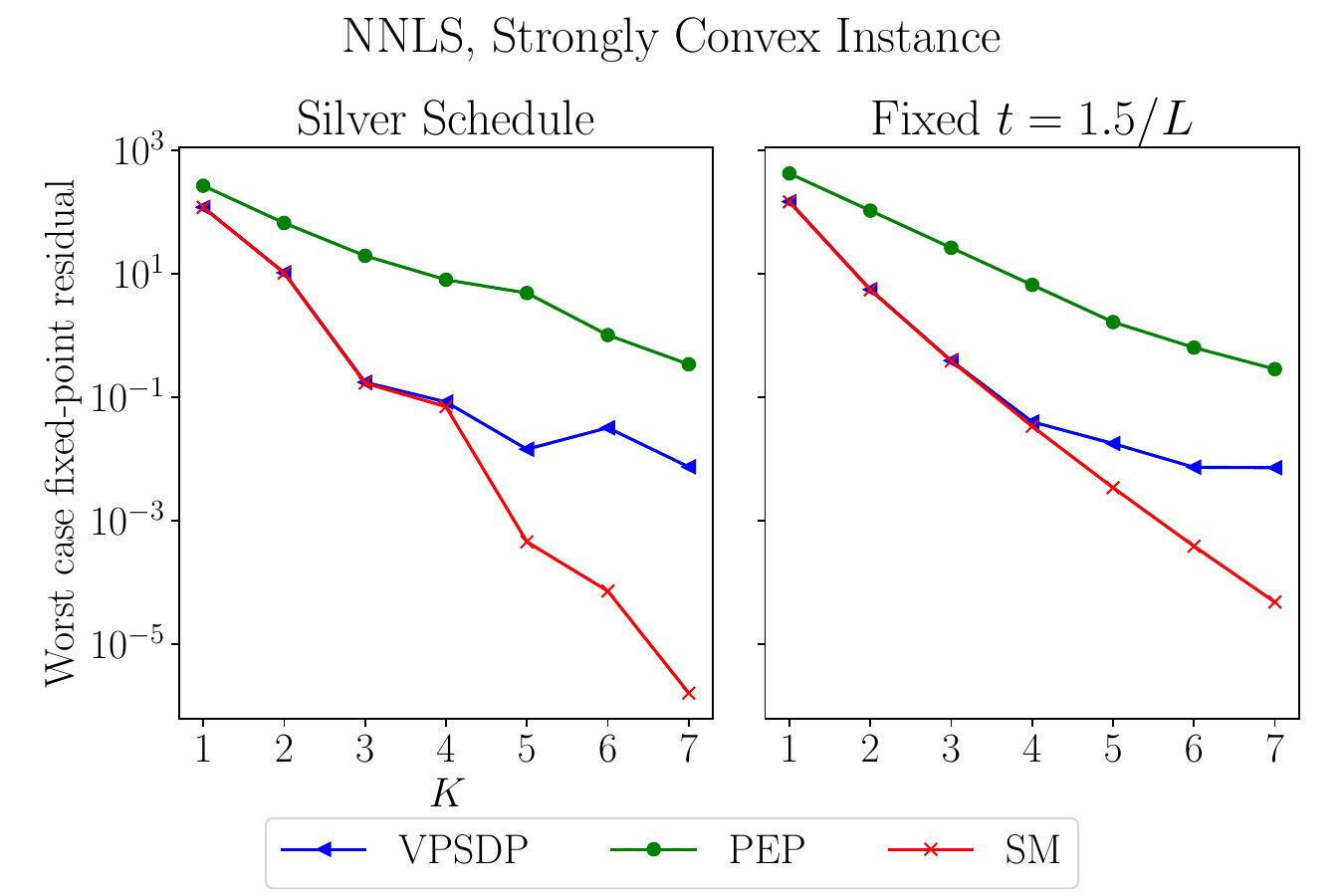}
    \caption{Results for the silver schedule on the strongly convex NNLS instance compared to a single fixed step size schedule. Silver results are presented in Table~\ref{tab:nnls_silver_strongcvx}. The VPSDP bound for the silver schedule is lower at $K=7$ but the relaxation gets looser as the sample maximum approaches 0.}
    \label{fig:NNLS_silver_strongcvx}
\end{figure}

\begin{figure}
\centering
    \includegraphics[width=0.7\textwidth]{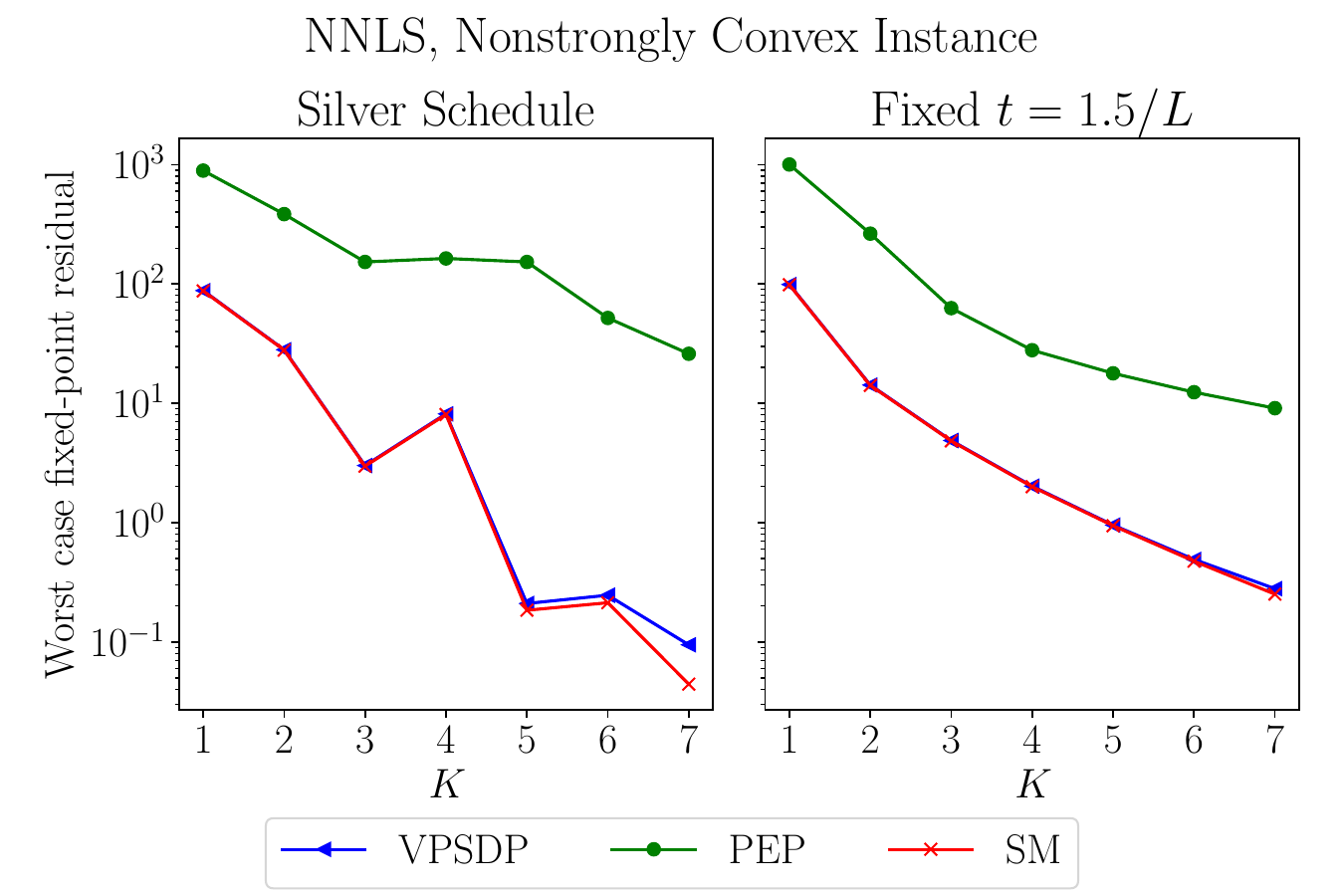}
    \caption{Results for the silver schedule on the nonstrongly convex NNLS instance compared to a single fixed step size schedule. Silver results are presented in Table~\ref{tab:nnls_silver_nonstrongcvx}.
    The VPSDP bound for the silver schedule at $K=7$ is lower than the sample maximum bound for the fixed step size case.
    The overall convergence is slower when compared to the strongly convex case so the relaxation gap is much smaller.}
    \label{fig:NNLS_silver_nonstrongcvx}
\end{figure}

\paragraph{Fixed step size schedule results.}
The nonstrongly convex results are presented in Figure~\ref{fig:nonstrong_NNLS_gridt} and Tables~\ref{tab:nnls_nonstrong_fixed_pt1} and~\ref{tab:nnls_nonstrong_fixed_pt2}, while the strongly convex results are presented in Figure~\ref{fig:strong_NNLS_gridt} and Tables~\ref{tab:nnls_fixed_pt1} and~\ref{tab:nnls_fixed_pt2}.
Across the step sizes, our VPSDP shows a significant improvement in the estimate of the worst-case objective over PEP.
In particular, in both cases, with a step size $t=2/L$, the PEP bound remains constant for all $K$.
This implies that there exists an $L$-smooth, $\mu$-strongly convex function where proximal gradient descent does not converge, which agrees with the classical analysis~\cite[Chapter 9]{boyd2004convex}, \cite[Chapter 2]{lecturesnesterov}.
However, our VPSDP captures that proximal gradient descent still converges for these particular \glspl{PQP}.

In Figure~\ref{fig:NNLS_fixed_K4}, we look at the strongly convex instance at $K=4$ across the different step sizes.
There is a discrepancy between the step size that gives the lowest PEP bound and the lowest VPSDP bound.
When paired with the lower bound from the sample maximum, SM, the VPSDP is able to determine the correct step size choice when compared to PEP.
The relative scales of the two instances show faster convergence in the strongly convex instance, as expected.
When comparing to the sample maximum bound across the step sizes, the VPSDP relaxation gets looser as $K$ grows.
In particular, as the sample maximum fixed point residual gets close to 0, the corresponding VPSDP bound becomes looser.

\paragraph{Silver step size schedule results.}
We present the results with the silver step size schedule for a strongly convex problem in Figure~\ref{fig:NNLS_silver_strongcvx} and Table~\ref{tab:nnls_silver_strongcvx}, and for a nonstrongly convex problem in Figure~\ref{fig:NNLS_silver_nonstrongcvx} and Table~\ref{tab:nnls_silver_nonstrongcvx}.
In both cases the VPSDP gives a tighter bound compared with PEP.
We again observe that the VPSDP relaxation gap gets looser as the sample maximum lower bound approaches $0$ for higher values of $K$.

Note that the silver schedule derivation was exclusively for smooth unconstrained problems and, as far as we know, its convergence proof has yet to be extended to constrained problems~\cite[Section 6]{altschuler2023acceleration}.
The difficulty arises from the fact that the silver schedule cannot be studied on a per-iteration basis; rather, the entire sequence of gradient steps must be studied together with the interleaved proximal steps.
However, our VPSDP analysis offers evidence that the silver schedule can also perform well in constrained settings.

\subsection{Network utility maximization}
Consider a network of $p$ edges and $n$ flows, where each edge $i$ has capacity $d_i$ for $i=1,\dots p$, and each flow $j$ has a nonnegative flow rate $x_j$ and a maximum value $s_j$ for $j=1,\dots,n$.
Each flow routes over a specific set of edges modeled using a routing matrix 
$R \in \{0, 1\}^{p \times n}$, where $R_{ij}$ is $1$ if flow $j$ routes through edge $i$, and $0$ otherwise.
The total traffic over each edge is the sum of the flow rates over all flows that pass through it. 
Therefore, the vector of traffic for every edge is given by $Rx$, which is upper-bounded by parametric capacities $\theta$, \ie, $Rx \le \theta$.
We define the total utility as the sum of linear utilities for each flow, \ie, $w^Tx$ where $w \in \reals^n$ is a fixed vector of weights.
Our goal is to choose the flow vector $x$ that maximizes the total utility across the network, subject to capacity constraints and bounds on the flow.
This problem, often referred to as as \gls{NUM}~\cite[Section 1.3.7]{mattingley2010codegen}, can be written as the following parametric \gls{LP},
\begin{equation}\label{prob:generalLP}
    \begin{array}{ll}
		\text{minimize} & c^T x \\
		\text{subject to} & Ax \leq b(\theta),
	\end{array}
\end{equation}
where $c = -w$, $A^T = [R^T\: -I\ I] \in \reals^{m \times n}$ with $m = p + 2n$, and $b(\theta) = (\theta, 0, s) \in \reals^{p + 2n}$.
By defining the primal-dual iterate $z = (x, y) \in \reals^{n + m}$, we can write~\eqref{prob:generalLP} as the following complementarity problem
\begin{equation*}
    \ifpreprint
    \mathcal{C} \ni z \perp Mz + q \in \mathcal{C}^*, \quad \text{where}\quad  M = \begin{bmatrix}
        0 & A^T \\
        -A & 0
    \end{bmatrix} \in \reals^{(m + n)\times(m+n)}, \quad q(\theta) = \begin{bmatrix}
        c \\ b(\theta)
    \end{bmatrix} \in \reals^{m+n},
    \else
    \begin{array}{c}
        \mathcal{C} \ni z \perp Mz + q \in \mathcal{C}^*,\\
        \text{where}\quad  M = \begin{bmatrix}
            0 & A^T \\
            -A & 0
        \end{bmatrix} \in \reals^{(m + n)\times(m+n)}, \quad q(\theta) = \begin{bmatrix}
            c \\ b(\theta)
        \end{bmatrix} \in \reals^{m+n},
    \end{array} 
    \fi
\end{equation*}
and $\mathcal{C} = \reals^n \times \reals^m_+$. 
Cone $\mathcal{C}^* = \{0\}^n \times \reals^m_+$ is the dual cone to $\mathcal{C}$, defined as $\mathcal{C}^* =\{w \mid w^T u \ge 0, u \in \mathcal{C}\}$.
We solve this problem using \gls{DR} splitting~\cite{douglas1956dr,scs3}\cite[Section 3]{sambharya2022l2ws} in the form outlined in 
Algorithm \ref{alg:dr}.
We use $\Pi_\mathcal{C}(v)$ to denote the projection of a vector $v \in \reals^{n + m}$ onto cone $\mathcal{C}$ (clipping the negative values in the last $m$ elements of $v$ to $0$).

\begin{algorithm}
  \caption{\gls{DR} splitting for a general \gls{LP} problem~\eqref{prob:generalLP}.} 
	\label{alg:dr}
  \begin{algorithmic}[1]
      \State {\bf Given} problem data $(M, q(\theta))$, $K$ number of iterations, initial iterate $z^0$
      \For{$k=0, \dots, K-1$}
        \State Solve $(M+I)u^{k+1} = z^k-q(\theta)$
        \State $\tilde{u}^{k+1} = \Pi_\mathcal{C}(2u^{k+1} - z^k)$
        \State $z^{k+1} = z^k + \tilde{u}^{k+1} - u^{k+1}$
      \EndFor
      \State \textbf{return} $z^K$

  \end{algorithmic}
\end{algorithm}

In this experiment, we show how our framework can analyze warm-starting strategies by modifying the initial set $Z$.
In particular, we analyze an application-specific warm-start heuristic to set the initial iterates as follows: every initial flow rate set to the same value being the minimum, across edges, of the  ratio between the edge capacity and the number of flows passing through it~\cite[Section 1.3.7]{mattingley2010codegen}.

\paragraph{Problem setup.}
We create the routing matrix $R$ with dimensions $p=10, n=5$, which gives a fixed-point iterate of dimension $p + 3n = 25$.
For each entry $R_{ij}$, we randomly sample a Bernoulli random variable with success probability $0.5$.
We randomly sample the weight vector $w$ from a uniform distribution on $[0, 1]^n$, the saturation values $s$ from a uniform distribution on $[4, 5]^n$, and the edge capacities $\theta$ from a uniform distribution over $\Theta = \{\theta \in \reals^n \mid \|\theta - (10)\ones \|_2 \le 0.4\}$.
To obtain a representative primal-dual solution $\overline{z}$, we sample one point $\overline{\theta}$ from $\Theta$ and solve the resulting \gls{LP}.
We also use $\overline{\theta}$ to compute the heuristic-based initial iterate $\tilde{z} = (\alpha \ones, 0) \in \reals^{n + m}$ where $\alpha = \min_i \theta_i/k_i$ and $k_i$ is the number of flows that pass over edge $i$~\cite[Section 1.3.7]{mattingley2010codegen}.
We create three versions of the verification problem and compare the difference between $Z = \{0\}$ (cold start), $Z = \{\tilde{z}\}$ (heuristic start), and $Z = \{\overline{z}\}$ (warm start from representative primal-dual solution).
The PEP formulation is the same across these examples, because the problem class and algorithm do not change (see Section~\ref{sec:PEPdiff}), with the exception of the radius bounding the initial distance to optimality.

\paragraph{Results.}
We show the results in Figure~\ref{fig:num} and Table~\ref{tab:num_results}.
As implied by the sample maximum bound, the cold start has a lower residual than the heuristic start at $K=5$ for this problem setup.
Being a heuristic, there is no guarantee that the heuristic start will perform better for every NUM instance.
Our verification problem is able to pick up on this fact because of the explicit incorporation of warm-starting.

\begin{figure}[ht]
\centering
    \includegraphics[width=0.8\textwidth]{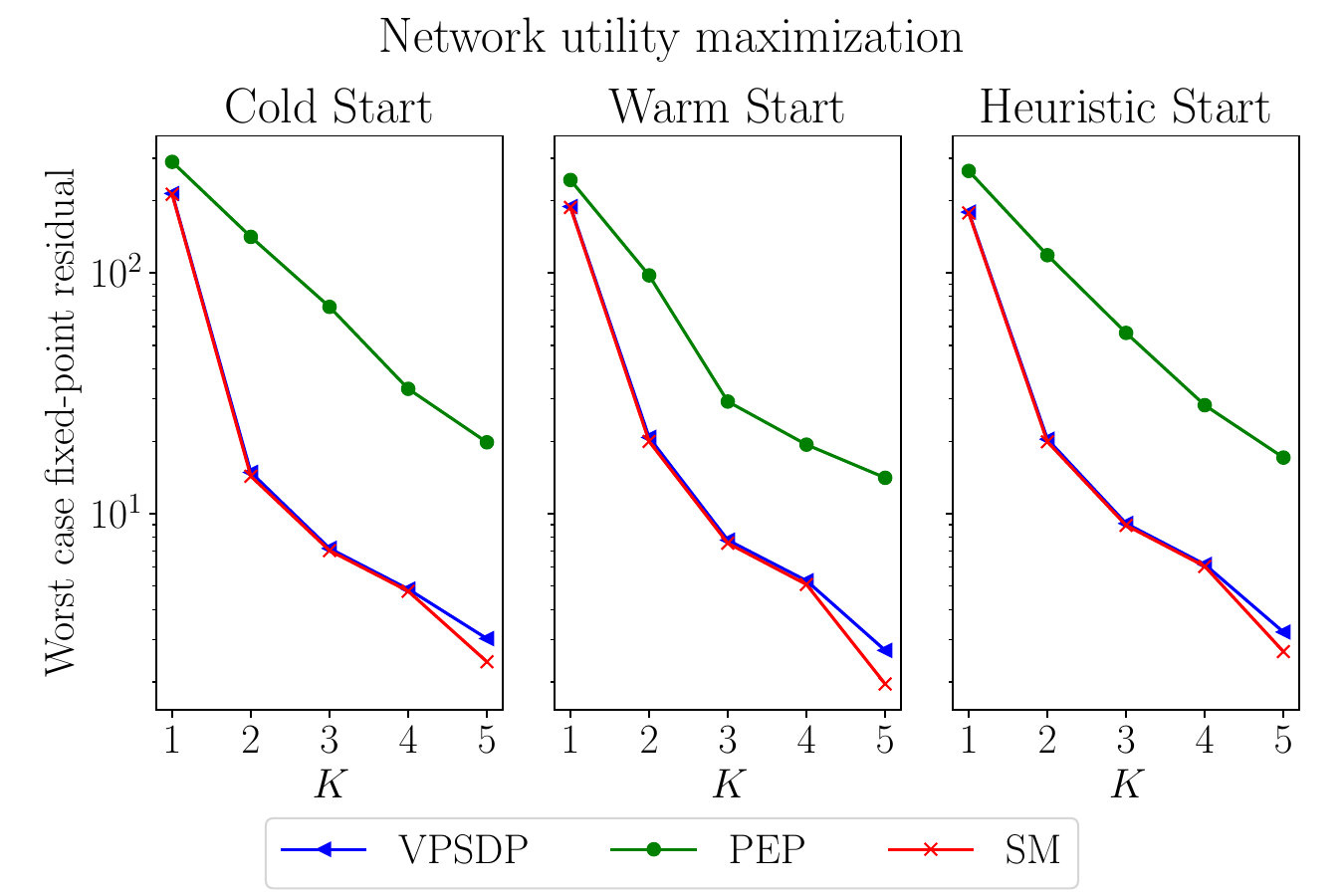}
    \caption{Results for the NUM experiment. For both PEP and VPSDP, the warm start has the tightest bound on the residual. 
    The SM bound shows that VPSDP finds a tighter bound compared to PEP. Full results are in Table~\ref{tab:num_results}.}
    \label{fig:num}
\end{figure}

\subsection{Lasso}\label{sec:lasso_exp}
To analyze the effect of Nesterov acceleration, we study the $\ell_1$-regularized least squares (Lasso)~\cite{lasso} problem,
\begin{equation}\label{prob:lasso}
    \begin{array}{ll}
		\text{minimize} & (1/2)\lVert Ax - b(\theta) \rVert^2 + \lambda \lVert x \rVert_1,
	\end{array}
\end{equation}
with $A \in \reals^{m \times n}$ and $b(\theta) \in \reals^m$.
We compare the \gls{ISTA} and \gls{FISTA} \cite[Section 4]{beck2009ista}.
The \gls{ISTA} is proximal gradient descent using the soft-thresholding function, and is detailed in Algorithm~\ref{alg:ista}.
On the other hand, the \gls{FISTA} is a version of \gls{ISTA} with carefully chosen momentum updates, and is detailed in Algorithm~\ref{alg:fista}.
\begin{algorithm}
  \caption{\gls{ISTA} for the Lasso problem~\eqref{prob:lasso}.} 
	\label{alg:ista}
  \begin{algorithmic}[1]
      \State {\bf Given} step size $t > 0$, problem data $(A, b(\theta), \lambda)$, $K$ number of iterations, initial iterate $z^0$.
      \For{$k=0, \dots, K-1$}
        \State $y^{k+1} = (I-tA^TA)z^k + tA^T b(\theta)$
\State $z^{k+1} = \max\{y^{k+1}, \lambda t\} - \max\{-y^{k+1}, \lambda t\}$
      \EndFor
      \State \textbf{return} $z^K$

  \end{algorithmic}
\end{algorithm}

\begin{algorithm}
  \caption{\gls{FISTA} for the Lasso problem~\eqref{prob:lasso}.} 
	\label{alg:fista}
  \begin{algorithmic}[1]
      \State {\bf Given} step size $t > 0$, problem data $(A, b(\theta), \lambda)$, $K$ number of iterations, initial iterates $z^0, w^0$, and parameter $\beta_0 = 1$.
      \For{$k=0, \dots, K-1$}
        \State $y^{k+1} = (I-tA^TA)w^k + tA^T b(\theta)$
\State $z^{k+1} = \max\{y^{k+1}, \lambda t\} - \max\{-y^{k+1}, \lambda t\}$
        \State $\beta_{k+1} = (1 + \sqrt{1 + 4\beta_k^2})/2$
        \State $w^{k+1} = z^{k+1} + (\beta_k - 1)/\beta_{k+1}(z^{k+1} - z^k)$
      \EndFor
      \State \textbf{return} $z^K$

  \end{algorithmic}
\end{algorithm}

\paragraph{Problem setup.}
We sample $A \in \reals^{10 \times 15}$ uniformly at random from a standard Normal distribution.
We choose $\lambda = 10$ and parameterize $b(\theta)$ as an $\ell_2$-ball of radius 0.25 centered at $(10)\ones$.
For the step size, we use $t = 0.04$ for both algorithms.
For the initial set, let $\overline{z}$ be the minimizer of $(1/2)\lVert Az - \overline{b}\rVert_2^2$, where $\overline{b}$ is randomly sampled from the parameter set.
That is, the least squares solution to a random, unregularized minimization problem.
For ISTA, we use $\{\overline{z}\}$ as the initial set for $z^0$ and for FISTA, we use $\{(\overline{z}, \overline{z})\}$ as the initial set for $(z^0, w^0)$.

\paragraph{Results.}
The results are presented in Figure~\ref{fig:lasso} and Table~\ref{tab:lasso_results}.
While FISTA has a larger worst-case fixed-point residual for the middle $K$ values, it outperforms ISTA at the largest value $K=7$.
In this experiment, the VPSDP provide particularly tight bounds, as shown by the sample maximum bound SM.
It is also worth noting that the FISTA appears to diverge with PEP, which is consistent with prior results about the convergence of the accelerated proximal gradient descent for nonsmooth objective functions \cite[Section 4.2.2]{taylor2017exactworst}.

\paragraph{Global solver comparison.}
Figure~\ref{fig:lasso_global} and Table~\ref{tab:lasso_glob_results} show the comparison between VPSDP and Gurobi nonconvex \gls{QCQP} solver.
Since all VPSDPs for Lasso were solved in under an hour, we solved the corresponding exact verification problems in Gurobi with 1\% optimality gap a 1 hour time limit.
Gurobi is able to find a good incumbent solution, compared to the lower bound SM.
We hypothesize that this is because the solver is able to exploit our precomputed lower and upper bounds in the branch and bound search.
However, we also observe that the 1 hour time limit is insufficient for the solver to solve the problem to within 1\% gap for any $K \geq 2$.
In addition, the reported gap significantly grows for larger $K$ values.
This shows that, while Gurobi is able to find a good incumbent solution, it struggles to certify optimality.

\begin{figure}
\centering
    \includegraphics[width=0.8\textwidth]{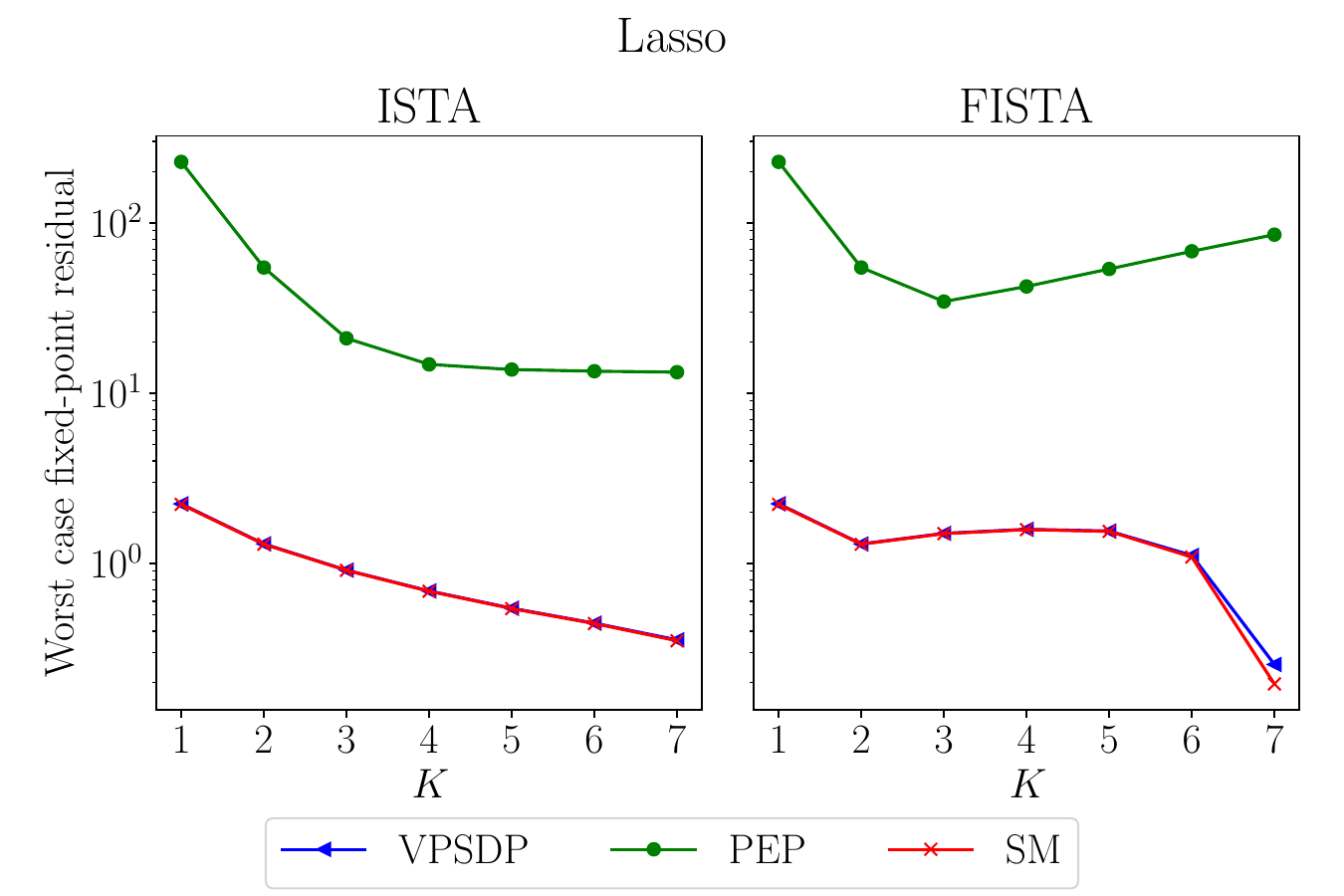}
    \caption{We show the experimental results for a Lasso problem with ISTA vs. FISTA. This shows a parametric family where FISTA outperforms ISTA with $K=7$ steps.}
    \label{fig:lasso}
\end{figure}

\begin{figure}
\centering
    \includegraphics[width=0.8\textwidth]{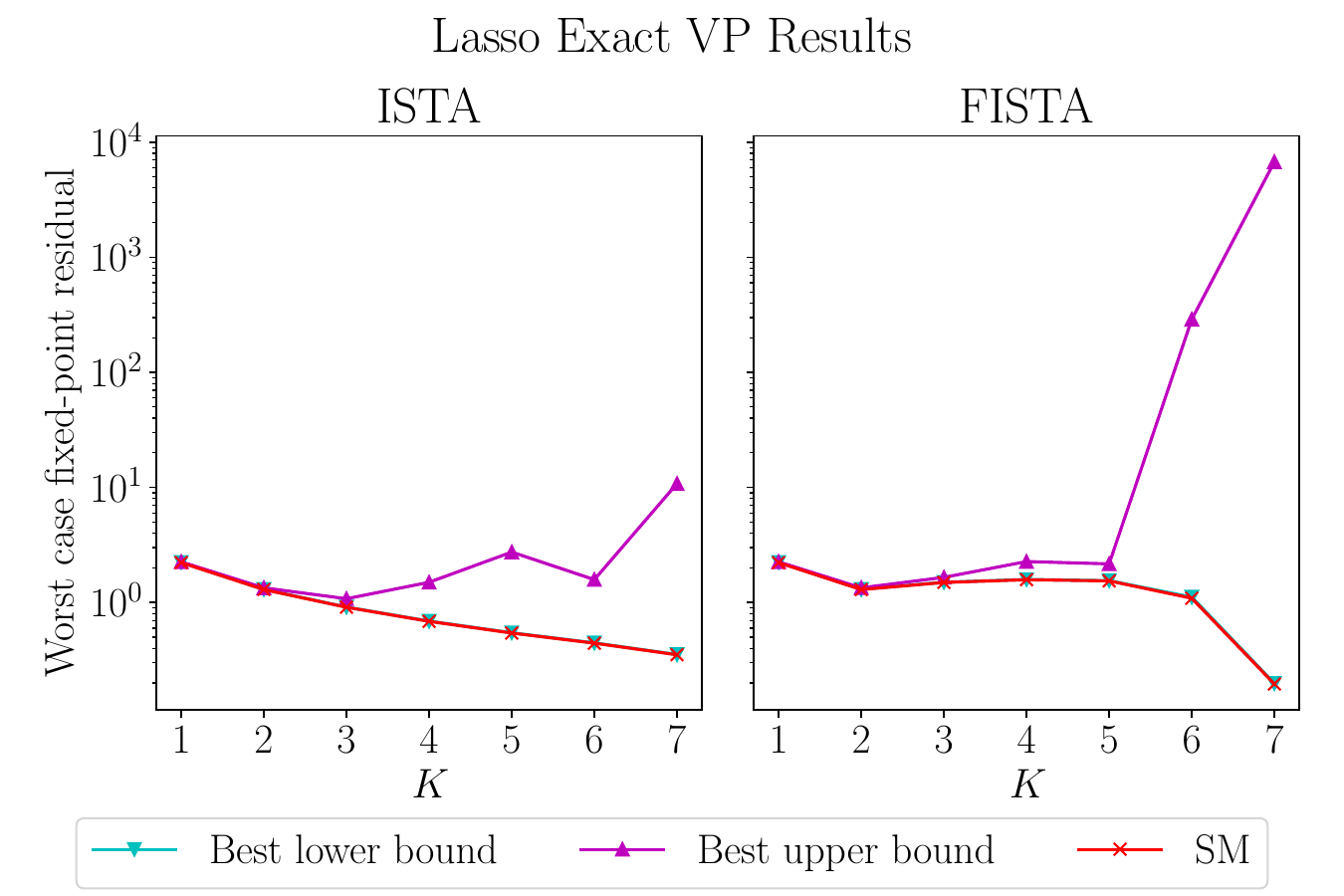}
    \caption{Lasso exact verification problem results after 1 hour solve time. Gurobi as a nonconvex \gls{QCQP} solver is able to find the incumbent solution but struggles to verify optimality and tighten the upper bounds for larger values of $K$.}
    \label{fig:lasso_global}
\end{figure}

\subsection{Optimal control}
Consider the problem of controlling an input-constrained linear dynamical system over a finite time horizon~\cite[Chapter 8.1]{borrelli2017mpc}.
The system is a vehicle, modeled as a point mass in the 2D plane, to approach the origin.
We discretize the dynamics with sampling time $h$, obtaining a finite horizon discretization indexed as $t = 1,\dots, T$.
We define the system state as $s_t = (p_t, v_t) \in \reals^4$, where $p_t \in \reals^2$ represents the position, and $v_t\in \reals^2$ the velocity at time~$t$.
We observe the output $y_t = p_t$.
The input $u_t \in \reals^2$ is the force applied to the system with mass $m$ and friction coefficient $\eta$.
We can write the linear dynamics as~\cite[Chapter 17.2]{boyd2018vmls}
\begin{equation*}
    s_{t+1} = A^{\rm dyn}s_t + B^{\rm dyn}u_t,\quad y_{t} = Cs_t,
\end{equation*}
for $t = 1,\dots, T$, with
\begin{equation*}
    A^{\rm dyn} = \begin{bmatrix}
        1 & 0 & h & 0 \\
        0 & 1 & 0 & h \\
        0 & 0 & 1-h\eta/m & 0 \\
        0 & 0 & 0 & 1-h\eta/m
    \end{bmatrix}, \quad
    B^{\rm dyn} = \begin{bmatrix}
        0 & 0 \\
        0 & 0 \\
        h/m & 0 \\
        0 & h/m
    \end{bmatrix}, \quad \text{and}\quad 
    C = \begin{bmatrix}
        1 & 0 & 0 & 0 \\
        0 & 1 & 0 & 0
    \end{bmatrix}.
\end{equation*}
Our goal is to minimize the sum of the output deviations with respect to a reference position~$y^{\rm ref} \in \reals^{2}$ and input efforts over the time horizon, \ie, $\sum_{t=1}^T \| y_t - y^{\rm ref} \|^2 + \gamma \sum_{t=1}^{T-1} \| u_t \|^2$.
We bound the maximum value of the inputs with $u^{\rm max}$, \ie, $\left\| u_t \right\|_\infty \leq u^\text{max}$, and the maximum consecutive input variations (slew rate) by $d^{\rm max}$, \ie, $\|u_t - u_{t-1} \| \leq d^{\max}$.
The optimal control problem can be written as
\begin{equation}\label{prob:optcontrol}
    \begin{array}{ll}
    \text{minimize} & \sum_{t=1}^T \lVert Cs_t - y^{\rm ref} \rVert^2 + \gamma \sum_{t=1}^{T-1} \lVert u_t \rVert^2 \\
    \text{subject to} & s_{t+1} = A^{\rm dyn}s_t + B^{\rm dyn}u_t, \quad t=1,\dots,T-1\\
    & \left\| u_t \right\|_\infty \leq u^\text{max}, \quad \left\| u_t - u_{t-1} \right\|_\infty \leq d^\text{max}, \quad t = 1,\dots,T-1\\
    & s_1 = \bar{s}.\\
    \end{array}
\end{equation}
where $\bar{s}$ is the initial state and $u_{0}$ the previous control input. Together, they represent the problem parameters $\theta = (\bar{x}, u_0)$.
Following the derivation in \cite[Chapter 8.2]{borrelli2017mpc}, we substitute out the equality constraints defining the dynamics, and obtain the following \gls{PQP} in the variables $x = (u_1, \dots, u_{T-1}) \in \reals^{2(T-1)}$,
\begin{equation}\label{prob:controlqp}
	\begin{array}{ll}
		\text{minimize} & (1/2) x^T P x\\
		\text{subject to} & l(\theta) \leq Ax \leq u(\theta).
	\end{array}
\end{equation}
The output reference cost and input costs enter into the objective through $P \in \symm^{2(T-1)}_{++}$, and the linear dynamics and input constraints enter in through the constraint matrix $A \in \reals^{4(T-1) \times 2(T-1)}$ and the bounds,~$l(\theta)$ and~$ u(\theta)$.
We solve this problem using the \gls{ADMM} in the form adopted in the \gls{OSQP} solver~\cite{stellato2020osqp} without over-relaxation.
In Algorithm~\ref{alg:osqp_admm}, we write the steps as a fixed-point iterations over iterate $z^k = (x^k, v^k)$~\cite[Section 4.1]{banjac2019infeas}, where $v^k$ implicitly represents dual and slack variables.
We use $\Pi_\mathcal{C}(v)$ to denote the projection of a vector $v \in \reals^m$ onto the set $[l(\theta), u(\theta)]$, which we outline in Table~\ref{tab:proxoper_constraints}.

\begin{algorithm}
  \caption{\gls{OSQP} algorithm without over-relaxation for a \gls{QP}~\eqref{prob:controlqp}.} 
	\label{alg:osqp_admm}
  \begin{algorithmic}[1]
      \State {\bf Given} parameters $\sigma > 0$, either $\rho > 0$ OR $\rho \in \symm_{++}$ a diagonal matrix, initial iterate $z^0 = (x^0, v^0)$, and $\mathcal{C} = [l(\theta), u(\theta)]$
      \For{$k=0,\dots,K-1$}
        \State $w^{k+1} =\Pi_{\mathcal{C}}(v^k)$
        \State Solve $(P + \sigma I + A^T \rho A)x^{k+1} = \sigma x^k + A^T \rho(2 w^{k+1} - v^k)$
        \State $v^{k+1} = \rho A x^{k+1} + (I + \rho I) v^k - (2\rho I + I)w^{k+1}$
      \EndFor
      \State \textbf{return} $z^{K} = (x^K, v^K)$.
  \end{algorithmic}
\end{algorithm}

While \gls{OSQP} adapts the step size $\rho$ throughout iterations~\cite[Section 5.2]{stellato2020osqp}, we use a fixed step size $\rho$ for simplicity.
We study two alternatives for $\rho$, scalar and diagonal.
The scalar $\rho > 0$ is a single value commonly used in \gls{ADMM}\cite{boyd2011admm}.
The diagonal $\rho$ is a diagonal matrix in $\symm^{4(T-1)}_{++}$ with different step sizes for each index.
To improve convergence~\cite[Section 5.2]{stellato2020osqp}, we set the same large value step size for the indices where $l_i(\theta) = u_i(\theta)$ (equality constraints), and the same low value for the other constraints.
For the optimal control formulation~\eqref{prob:controlqp}, the first 4 indices have $l_i(\theta) = u_i(\theta)$.

\paragraph{Problem setup.}
We choose $h=0.1, \eta=0.1, m=1, \gamma=0.2, u^\text{max}=2, d^\text{max}=0.2, T=5$.
The reference position is the origin, $y^{\rm ref} = 0$.
To generate the parametric sets, we simulate the model in a closed loop with noise for $T_\text{sim} = 25$ steps.
We start with the vehicle with initial state $\bar{s} = (5, 5, 0, 0)$ and previous input as $u_{0} = (0,0)$, \ie, initial position $(5,5)$ at rest and with no previous input force.
To generate the parameter sets, we solve the optimal control problem from the initial state, propagate the dynamics forward, and add a noise perturbation each time to compute the successive state $s_{t+1}$.
The noise is sampled from a Gaussian distribution with mean 0 and standard deviation $10^{-3}$.
We use $\theta = (s_{t+1}, u_t)$ to set the parameters for the subsequent optimal control instance.
We propagate the dynamics forward with noise for $T_\text{sim}$ steps and save all the state and input values.
After repeating the entire simulation  $N$ times, we find the smallest hypercube that contain all values of $\bar{s}$ and $u_0$, obtaining the parameter set $\Theta$.
In practice, we repeatedly solve optimal control problems with slight parameter variations $\theta$.
Therefore, a reasonable warm-start value for $x^0$ is to propagate the dynamics forward one step and shift the control input accordingly~\cite[Section IV]{marcucci2021ws_mpc}\cite{borrelli2017mpc}.
After computing  this warm-start value for every sample, we fit the smallest hypercube containing all values of $(x^0, \rho A x^0)$ to construct the initial iterate set $Z$ where iterates $z = (x, v)$ live.
In the \gls{OSQP} algorithm~\eqref{alg:osqp_admm}, we fix $\sigma = 10^{-6}$. 
First, we analyze the scalar $\rho=1$ case.
Second, we consider the diagonal $\rho$ case where $\rho_{ii} = 10$ for $i = 1,\dots,4$ and 1 otherwise.

\paragraph{Results.} The results are presented in Figure~\ref{fig:mpc} and Table~\ref{tab:mpc_results}.
The diagonal $\rho$ case has a higher fixed-point residual at $K=2$, but this is expected due to the large $\rho$ value corresponding to the equality constraints.
For higher values of $K$, the diagonal $\rho$ shows better convergence properties.
Note that PEP cannot distinguish between the scalar and diagonal $\rho$ case because of the dimension-free property discsused in Section~\ref{sec:PEPdiff}.
Since we cannot encode the specific indices of the diagonal $\rho$ matrix into a PEP SDP, we only include the PEP comparison in the scalar $\rho$ case.
This highlights an advantage of our verification problem framework; it allows us to encode strategies used in modern solvers, such as constraint-dependent step sizes, and analyze their performance.
\begin{figure}[h]
\centering
    \includegraphics[width=0.8\textwidth]{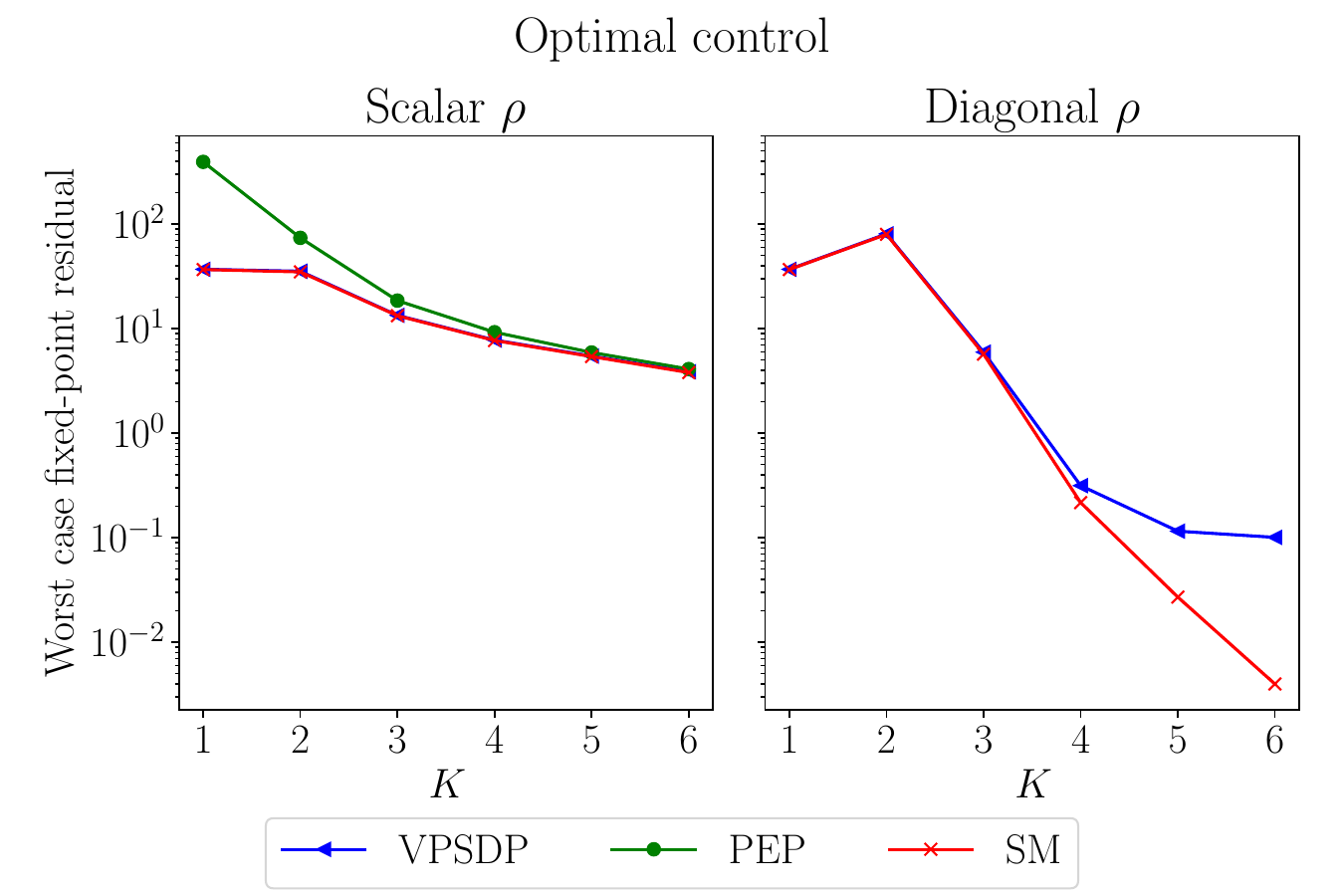}
    \caption{We show the experimental results for the optimal control problem with the OSQP algorithm for Scalar vs. Diagonal $\rho$ values. Beyond $K=2$, the Diagonal $\rho$ case outperforms the Scalar $\rho$ case.
    We cannot analyze the Diagonal $\rho$ case via PEP due to its dimension-free property.}
    \label{fig:mpc}
\end{figure}

\section{Conclusion}
We presented a framework for verifying the performance of first-order methods in parametric quadratic optimization, by evaluating the worst-case fixed-point residual after a predetermined number of iterations.
We built a collection of common proximal algorithm steps as combinations of two {\it primitive steps}: affine and element-wise maximum steps.
Our framework can explicitly quantify the effects of warm-starting, commonly used in parametric optimization, by directly representing sets for initial iterates and parameters.
We established that, in general, solving the \gls{PQP} verification problem is~\NPhard, and we constructed strong \gls{SDP} relaxations via constraint propagation and bound tightening techniques.
Using a variety of examples, we showed the flexibility of our framework, and its ability to uncover practical convergence behavior that cannot be captured using standard worst-case analysis techniques.
To the authors' knowledge, this is the first work to highlight the potential conservatism of performance estimation approaches in parametric optimization.
While our analysis was related to \glspl{PQP}, we believe that it motivates similar studies for other algorithms targeting parametric convex optimization problems. 
The main limitations of the proposed approach are that the \gls{SDP} relaxation cannot easily scale to larger \gls{PQP} instances, and that the tightness degrades as the number of algorithm steps increases.
Future works could address these issues by developing customized solution algorithms to solve large-scale semidefinite relaxations~\cite{yurtsever2019sdp}, by adopting more scalable alternatives to \gls{SDP} relaxations~\cite{ahmadidsos}, or by attempting to directly solve a nonconvex formulation of the verification problem~\cite{burer2003nonlinear}.

\ifpreprint
\section*{Acknowledgements}
\else
\begin{acknowledgements}
\fi
The authors are pleased to acknowledge that the work reported on in this paper was substantially performed using the Princeton Research Computing resources at Princeton University which is consortium of groups led by the Princeton Institute for Computational Science and Engineering (PICSciE) and Office of Information Technology's Research Computing.
\ifpreprint \else
\end{acknowledgements}
\fi

\ifpreprint \else
\section*{Statements and Declarations}
\paragraph{Competing Interests.}
The authors have no relevant financial or non-financial interests to disclose.

\fi

\newcommand{\etalchar}[1]{$^{#1}$}

\appendix

\section{Proof of Proposition \ref{prop:optimal_t}}\label{apx:opt_t_proof}
From \cite[Chapter 9]{boyd2004convex}\cite[Chapter 2]{lecturesnesterov}, for any $L$-smooth and $\mu$-strongly convex function, we have that
\begin{equation}
    \lVert z^k - z^\star \rVert \leq \tau \rVert z^{k-1} - z^\star \rVert,
\end{equation}
where $\tau = \max\{|1-t\mu|, |1-tL|\}$.
For $t \in (0, 2 /L)$, we have that $\tau \in (0, 1)$, so the gradient step is contractive.
To instead analyze the fixed-point residual, we apply the triangle inequality:
\begin{align*}
    \lVert z^k - z^{k-1} \rVert &\leq \lVert z^k - z^\star \rVert + \lVert z^{k-1} - z^\star \rVert \\
    &\leq (1+ \tau) \|z^{k-1} - z^\star \|\\
    &\leq \tau^{k-1} (1+ \tau) \|z^{0} - z^\star \|.
\end{align*}
For any $k$, the expression $\tau^{k-1}(1+\tau)$ is increasing for positive $\tau$ since the derivative is positive.
Since $\tau$ is minimized at $t=2/(\mu + L)$ \cite[Equation 1.2.27]{lecturesnesterov} and $\tau > 0$, the right hand side coefficient is minimized for the same $t$ value as well.

\section{Proofs for Section~\ref{sec:cvx_relaxation}}

\subsection{Proof of Theorem~\ref{thm:nphard}}
\label{apx:nphard}
We show problem~\eqref{prob:general_verifyprob} is~\NPhard~via a reduction from the 0-1 integer programming feasibility problem.
Specifically, deciding if there exists a feasible solution to the following integer program:
\begin{equation}\label{prob:01_IP}
\begin{array}{ll}
    \mbox{minimize} & 0 \\
    \mbox{subject to} & Ax = b,\\
    & x \in \{0, 1\}^n,
\end{array}
\end{equation}
is an~\NPhard~problem \cite[Section 4]{karp1972NP}.
Consider the following related optimization problem:
\begin{equation}\label{prob:IP_QPreduction}
\begin{array}{ll}
    \mbox{maximize} & \lVert x - (1/2)\ones \rVert^2 \\
    \mbox{subject to} & Ax = b,\\
    & 0 \leq x \leq 1.
\end{array}
\end{equation}
Note that the integrality constraints from~\eqref{prob:01_IP} were relaxed to be continuous in~\eqref{prob:IP_QPreduction}.
We have $ \lVert x - (1/2)\ones \rVert^2 = \sum_{i=1}^n (x_i - 1/2)^2$, and on $0 \leq x_i \leq 1$, $(x_i - 1/2)^2$ is maximized at $x_i = 0$ or $x_i = 1$ with value $1/4$.
Since $(x_i - 1/2)^2 \geq 0$, the optimal value of\eqref{prob:IP_QPreduction} is $n/4$ if and only if~\eqref{prob:01_IP} is feasible.
So, it is~\NPhard~to determine if the optimal value of~\eqref{prob:IP_QPreduction} is $n/4$.
We can rewrite~\eqref{prob:IP_QPreduction} as
\begin{equation}\label{prob:QPreduction_verifyprob}
\begin{array}{ll}
    \mbox{maximize} & \lVert y^1 - y^0 \rVert^2 \\
\mbox{subject to} & y^1 = x^0 - (1/2)q(\theta),\\
    & y^0 = 0,\\
    & q(\theta) = \ones,\\
    & Ax^0 = b,\\
    & 0 \leq x^0 \leq 1.
\end{array}
\end{equation}
Every constraint in~\eqref{prob:QPreduction_verifyprob} is either an affine step, a box constraint, or a polyhedral constraint.
So, we have reduced the 0-1 integer feasibility problem to deciding the maximum value of a verification problem, which proves that finding the optimal value of the verification problem is~\NPhard.

\subsection{Proof of Proposition~\ref{prop:obj_relax}}\label{apx:proof_obj_relax}

We first introduce a helper lemma about traces and matrix products.
\begin{lemma}[trace of positive semidefinite matrix product]\label{lemma:psd_matrix_prod_trace}
    If $A, B \in \symm^{d \times d}_+$ are positive semidefinite, then $\trace(AB) \geq 0$.
\end{lemma}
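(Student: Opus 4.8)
The plan is to reduce the claim to the elementary fact that a positive semidefinite matrix has nonnegative trace, since its trace equals the sum of its (nonnegative) eigenvalues. There are two equivalent elementary routes; I would present the symmetric-square-root argument as the cleanest, and mention the eigendecomposition argument as an alternative.

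First I would invoke the spectral theorem for real symmetric matrices to obtain the symmetric positive semidefinite square root $A^{1/2}$ satisfying $A = A^{1/2}A^{1/2}$ and $(A^{1/2})^T = A^{1/2}$; this exists precisely because $A \in \symm^{d \times d}_+$. Then, using the cyclic property of the trace, I would write
\begin{equation*}
\trace(AB) = \trace(A^{1/2}A^{1/2}B) = \trace(A^{1/2} B A^{1/2}).
\end{equation*}
The next step is to observe that $A^{1/2} B A^{1/2}$ is a congruence transformation of $B$ and is therefore positive semidefinite: for any $v \in \reals^d$, we have $v^T (A^{1/2} B A^{1/2}) v = (A^{1/2} v)^T B (A^{1/2} v) \geq 0$ by positive semidefiniteness of $B$. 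Since a positive semidefinite matrix has nonnegative eigenvalues and hence nonnegative trace, it follows that $\trace(A^{1/2} B A^{1/2}) \geq 0$, which gives $\trace(AB) \geq 0$.

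As an alternative I would keep in mind the eigendecomposition $A = \sum_{i=1}^d \lambda_i v_i v_i^T$ with $\lambda_i \geq 0$ and orthonormal $v_i$; then by linearity and the cyclic property $\trace(AB) = \sum_{i=1}^d \lambda_i \trace(v_i v_i^T B) = \sum_{i=1}^d \lambda_i\, v_i^T B v_i \geq 0$ term by term, since each $\lambda_i \geq 0$ and each $v_i^T B v_i \geq 0$.

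The main obstacle here is essentially nonexistent, as this is a standard fact; the only point that warrants care is justifying the existence of the symmetric square root (equivalently, the orthogonal diagonalization of $A$), which is immediate from the spectral theorem for real symmetric matrices and the assumption $A \succeq 0$.
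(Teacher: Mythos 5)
Your proof is correct and follows essentially the same route as the paper's: take a positive semidefinite square root (you factor $A$, the paper factors $B$ --- an immaterial symmetric swap), apply the cyclic property of the trace, and conclude via the fact that a congruence of a positive semidefinite matrix is positive semidefinite and hence has nonnegative trace. The eigendecomposition alternative you mention is a fine variant but adds nothing needed here.
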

\begin{proof}
Since $B$ is positive semidefinite, it has a square root, \ie~a positive semidefinite matrix $H \in \symm_+^{d\times d}$ such that $B = HH^T$.
Applying the cyclic property of the trace gives
\begin{equation*}
    \trace(AB) = \trace(A HH^T) = \trace(H^T A H).
\end{equation*}
For any $x$, since $A$ is also positive semidefinite,
\begin{equation*}
   x^T H^T A H x = (Hx)^T A (Hx) \geq 0.
\end{equation*}
This proves $H^T A H$ is positive semidefinite, therefore $\trace(H^T A H) \geq 0$.
\end{proof}

We can now proceed in proving Proposition~\ref{prop:obj_relax}. By rewriting $f(x,y)$ and applying the cyclic property of the trace, we have:
\begin{equation*}
    \|x - y \|^2 = \begin{bmatrix}x \\ y\end{bmatrix}^T \begin{bmatrix}
        I & -I \\ -I & I
    \end{bmatrix} \begin{bmatrix}x \\ y\end{bmatrix} = \Tr\left(\begin{bmatrix}
        I & -I \\ -I & I
    \end{bmatrix} \begin{bmatrix}x \\ y\end{bmatrix}\begin{bmatrix}x \\ y\end{bmatrix}^T\right).
\end{equation*}
We choose 
\begin{equation*}
    M = \begin{bmatrix}x \\ y\end{bmatrix}\begin{bmatrix}x \\ y\end{bmatrix}^T,
\end{equation*}
relax the equality to inequality, and apply the Schur complement, similar to the derivation in~\eqref{eqn:matvar_psdrelaxation}.

Since the squared norm is a convex function and the difference $x-y$ is affine, the entire expression is convex and, therefore, the coefficient matrix inside the trace is positive semidefinite.
This allows us to apply Lemma~\ref{lemma:psd_matrix_prod_trace} and finish the proof.

\subsection{Proof of Proposition~\ref{prop:affine_relax}}\label{apx:proof_affine_relax}

Since $D$ is assumed to be invertible, $F$ is nonempty for any $A, B, D$.
Given \reviewChanges{$(y, x, q, M_1, M_2) \in F$,}
observe that
\begin{equation*}
    (Dy)(Dy)^T = (Ax + Bq)(Ax + Bq)^T \implies Dyy^TD^T = \begin{bmatrix} A & B \end{bmatrix} 
\begin{bmatrix}
    x \\ q
    \end{bmatrix} 
     \begin{bmatrix}
    x \\ q
    \end{bmatrix}^T
\begin{bmatrix}
    A^T \\
    B^T
\end{bmatrix}.
\end{equation*}
\reviewChanges{Given the constraints for inclusion in $F$, we have}
\begin{equation*}
    M_1 = yy^T, \quad M_2 = 
\begin{bmatrix}
    x \\ q
    \end{bmatrix} 
     \begin{bmatrix}
    x \\ q
    \end{bmatrix}^T,
\end{equation*}
\reviewChanges{We finish the proof by} relaxing the equalities to inequalities, and applying the Schur complement to each.

\subsection{Proof of Proposition~\ref{prop:max_relax}}\label{apx:proof_max_relax}

Similarly to our proof of Proposition~\ref{prop:affine_relax}, given \reviewChanges{$(y, x, l, M) \in F$}, 
recall from Section~\ref{sec:fixedpt_core} that
\begin{equation*}
    y = \max\{x, l\} \iff y \geq x,\quad y \geq l,\quad (y - l)^T(y-x) = 0.
\end{equation*}
The complementarity constraint requires cyclic property of the trace:
    \begin{align*}
    (y - l)^T(y-x)
&=
    \begin{bmatrix}
        y \\ x \\ l
    \end{bmatrix}^T
    \begin{bmatrix}
        I & -I/2 & -I/2 \\
        -I/2 & 0 & I/2 \\
        -I/2 & I/2 & 0
    \end{bmatrix}
    \begin{bmatrix}
        y \\ x \\ l
    \end{bmatrix}\\
    &=
    \trace \left( \begin{bmatrix}
        I & -I/2 & -I/2 \\
        -I/2 & 0 & I/2 \\
        -I/2 & I/2 & 0
    \end{bmatrix}
\begin{bmatrix}
        y \\ x \\ l
    \end{bmatrix}
    \begin{bmatrix}
        y \\ x \\ l
    \end{bmatrix}^T
    \right) = 0.
\end{align*}
\reviewChanges{
    Given the constraints for inclusion in $F$, we have
}
\begin{equation*}
    M = \begin{bmatrix}
        y \\ x \\ l
    \end{bmatrix}
    \begin{bmatrix}
        y \\ x \\ l
    \end{bmatrix}^T,
\end{equation*}
and we finish the proof by relaxing the equality to inequality and applying the Schur complement.

\subsection{Proof of Proposition~\ref{prop:hypercube_relax}}\label{apx:proof_hypercube_relax}

The bounds $l \leq z \leq u$ can be rewritten as $u - z \geq 0,\; z -l \geq 0$.
By cross-multiplying the lower and upper bound inequalities, we derive:
\begin{equation*}
    (u - z)(z - l)^T = uz^T - ul^T - zz^T + zl^T \geq 0.
\end{equation*}
\reviewChanges{
    Additionally, we multiply the inequalities to themselves to derive:
    \begin{align*}
        (u - z)(u - z)^T = uu^T - uz^T - zu^T + zz^T &\geq 0\\
        (z - l)(z - l)^T = zz^T - zl^T - lz^T + ll^T &\geq 0.
    \end{align*}
    We have $M = zz^T$, relax this matrix equality to inequalites, and apply the Schur complement to finish the proof.
    Similarly to the RLT discussion in Section~\ref{subsec:tightening_sdp}, these sets of matrix inequalities imply the original bounds $l \le z \le u$~\cite[Proposition 1]{sherali1995reformulationconvexification}.
}

\subsection{Proof of Proposition~\ref{prop:polyhedra_relax}}\label{apx:proof_polyhedra_relax}
We relax the constraints in a similar way to our proof in Proposition~\ref{prop:affine_relax}.
Specifically, \reviewChanges{given $(z, s, M) \in F$,} we multiply the equality constraints with themselves:
\begin{equation*}
    (Az + s)(Az + s)^T = \begin{bmatrix}
    A & I
\end{bmatrix} 
\begin{bmatrix}
    z \\
    s
\end{bmatrix}
\begin{bmatrix}
    z \\
    s
\end{bmatrix}^T
\begin{bmatrix}
    A \\ I
\end{bmatrix}
= bb^T.
\end{equation*}
\reviewChanges{Given the constraints of $F$, we have}
\begin{equation*}
    M = \begin{bmatrix}
    z \\
    s
\end{bmatrix}
\begin{bmatrix}
    z \\
    s
\end{bmatrix}^T.
\end{equation*}
We relax the equalities to inequalities and apply the Schur complement \reviewChanges{to finish the proof}.

\subsection{Proof of Proposition~\ref{prop:l2ball_relax}}\label{apx:proof_l2_ball_relax}
\reviewChanges{
Given $(z, M) \in F$, squaring the norm constraint in $F$ gives
\begin{equation*}
    z^Tz - 2z^Tc + c^Tc \leq r^2.
\end{equation*}
By setting $M = zz^T$ and applying the cyclic property of the trace, we have
\begin{equation*}
    z^Tz - 2z^Tc + c^Tc = \trace(M) - 2z^Tc + c^Tc.
\end{equation*}
By relaxing the matrix equality to inequality, we have
\begin{equation*}
    z^Tz - 2z^Tc + c^Tc \le \trace(M) - 2z^Tc + c^Tc \le r^2,
\end{equation*}
and we apply the Schur complement to finish the proof.
}

\section{\reviewChanges{Complete \gls{SDP} relaxation for NNLS example}}\label{apx:sdp_coupling}
\reviewChanges{
In this Section, we show the VPSDP for $K=2$ using the \gls{NNLS} example and setup in Section~\ref{subsec:nnls} with initial iterate set $Z = \{0 \}$ and parameter set $\Theta$ as an $\ell_2$-ball of radius $r$ centered at $c$.
We use this example to expand on the idea of reusing matrix variables briefly introduced in Section~\ref{subsec:tightening_sdp}.
Letting $A_t = I - tA^TA$, $B_t = tA^T$, the VP for Algorithm~\ref{alg:proj_gd} becomes
\begin{equation}\label{prob:nnls_vp}
    \begin{array}{ll}
        \text{minimize} & \| z^2 - z^1 \|^2 \\
        \text{subject to} & y^1 = A_t z^0 + B_t b(\theta) \\
        & z^1 = (y^1)_+ \\
        & y^2 = A_t z^1 + B_t b(\theta) \\
        & z^2 = (y^2)_+ \\
        & z^0 = 0, \quad \| b(\theta) - c \| \le r.
    \end{array}
\end{equation}
Applying Propositions~\ref{prop:affine_relax} and~\ref{prop:max_relax} in isolation give the following matrix variables:
\begin{equation*}
    \begin{array}{l}
        \begin{bmatrix}
            M_1 & y^{1} \\
            (y^{1})^T & 1
        \end{bmatrix},
        \quad  
        \begin{bmatrix}
        \multicolumn{2}{c}{\multirow{2}{*}{$M_2$}}& z^0\\
        \multicolumn{2}{c}{}& b(\theta)\\
        (z^0)^T & b(\theta)^T & 1
        \end{bmatrix}, 
        \quad
        \begin{bmatrix}
            \multicolumn{2}{c}{\multirow{2}{*}{$M_3$}}& z^1\\
            \multicolumn{2}{c}{}& y^1\\
            (z^1)^T & (y^1)^T & 1
        \end{bmatrix}, \\[2em]
        \begin{bmatrix}
            M_4 & y^{2} \\
            (y^{2})^T & 1
        \end{bmatrix},
        \quad  
        \begin{bmatrix}
        \multicolumn{2}{c}{\multirow{2}{*}{$M_5$}}& z^1\\
        \multicolumn{2}{c}{}& b(\theta)\\
        (z^1)^T & b(\theta)^T & 1
        \end{bmatrix}, 
        \quad
        \begin{bmatrix}
            \multicolumn{2}{c}{\multirow{2}{*}{$M_6$}}& z^2\\
            \multicolumn{2}{c}{}& y^2\\
            (z^2)^T & (y^2)^T & 1
            \end{bmatrix}.
    \end{array}
\end{equation*}
While these matrix variables arise independently, the dimension of the VPSDP can be reduced by considering them together.
For example, the matrix variables involving $M_2$ and $M_5$ both have a component involving $b(\theta)$, so the bottom right sub-block of both variables correspond to the relaxed matrix variable for $b(\theta)b(\theta)^T$.
In order to maintain equality between such shared components, we use a consensus variable formulation from the \gls{SDP} chordal sparsity literature \cite{zheng2021chordal}, \cite[Section III]{zheng2017chordaladmm}.
}

\reviewChanges{
For notation, let $M_{x,y}$ correspond to the matrix variable that replaces the outer product variable $xy^T$.
For example, the consensus variable formulation means we create a single matrix variable $M_{b(\theta), b(\theta)}$ and reuse it in all necessary positive semidefinite constraints.
Letting $C_t = \begin{bmatrix}
    A_t & B_t \end{bmatrix}$, the full VPSDP corresponding to~\eqref{prob:nnls_vp} becomes
\begin{equation}
    \begin{array}{ll}
        \text{minimize} & \trace\left(\begin{bmatrix}
            I & -I \\ -I & I
        \end{bmatrix}\begin{bmatrix}
            M_{z^2, z^2} & M_{z^2, z^1} \\ M^T_{z^2, z^1} & M_{z^1, z^1}
        \end{bmatrix}\right) \\
        \text{subject to}
        & \begin{bmatrix}
            M_{z^2, z^2} & M_{z^2, z^1} \\ M^T_{z^2, z^1} & M_{z^1, z^1}
        \end{bmatrix} \succeq 0, \\
        & y^1 = A_t z^0 + B_t b(\theta), \\
        & M_{y^1, y^1} = C_t \begin{bmatrix}
            M_{z^0, z^0} & M_{z^0, b(\theta)} \\ M^T_{z^0, b(\theta)} & M_{b(\theta), b(\theta)}
        \end{bmatrix} C_t^T,\\
        & \begin{bmatrix}
            M_{z^0, z^0} & M_{z^0, b(\theta)} & z^0 \\ M^T_{z^0, b(\theta)} & M_{b(\theta), b(\theta)} & b(\theta) \\ (z^0)^T & b(\theta)^T & 1
        \end{bmatrix} \succeq 0, \\
        & z^1 \geq 0, \quad z^1 \geq y^1,\\
        & \trace\left(\begin{bmatrix}
            I & -I/2 \\ -I/2 & 0
        \end{bmatrix} \begin{bmatrix}
            M_{z^1, z^1} & M_{z^1, y^1} \\ M^T_{z^1, y^1} & M_{y^1, y^1}
        \end{bmatrix} \right) = 0, \\
        & \begin{bmatrix}
            M_{z^1, z^1} & M_{z^1, y^1} & z^1 \\ M^T_{z^1, y^1} & M_{y^1, y^1} & y^1 \\ (z^1)^T & (y^1)^T & 1
        \end{bmatrix} \succeq 0, \\
        & y^2 = A_t z^1 + B_t b(\theta), \\
        & M_{y^2, y^2} = C_t \begin{bmatrix}
            M_{z^1, z^1} & M_{z^1, b(\theta)} \\ M^T_{z^1, b(\theta)} & M_{b(\theta), b(\theta)}
        \end{bmatrix} C_t^T,\\
        & \begin{bmatrix}
            M_{z^1, z^1} & M_{z^1, b(\theta)} & z^1 \\ M^T_{z^1, b(\theta)} & M_{b(\theta), b(\theta)} & b(\theta) \\ (z^1)^T & b(\theta)^T & 1
        \end{bmatrix} \succeq 0, \\
        & z^2 \geq 0, \quad z^2 \geq y^2,\\
        & \trace\left(\begin{bmatrix}
            I & -I/2 \\ -I/2 & 0
        \end{bmatrix} \begin{bmatrix}
            M_{z^2, z^2} & M_{z^2, y^2} \\ M^T_{z^2, y^2} & M_{y^2, y^2}
        \end{bmatrix} \right) = 0, \\
        & \begin{bmatrix}
            M_{z^2, z^2} & M_{z^2, y^2} & z^2 \\ M^T_{z^2, y^2} & M_{y^2, y^2} & y^2 \\ (z^2)^T & (y^2)^T & 1
        \end{bmatrix} \succeq 0,\\
        & z^0 = 0, \quad M_{z^0, z^0} = 0, \\
        & \trace\left(M_{b(\theta), b(\theta)}\right) - 2 b(\theta)^T c + c^T c \le r^2.
    \end{array}
\end{equation}
Another benefit of this consensus variable formulation is that we can reduce the number of positive semidefinite constraints.
Instead of directly including the positive semidefinite constraint from Proposition~\ref{prop:l2ball_relax}, it is implied from the larger positive semidefinite constraints.
}

\section{Numerical result tables}\label{apx:fullresulttables}
In this section we provide full data tables for every experiment in Section~\ref{sec:experiments}.

\begin{table}[h]
      \caption{Nonnegative least squares results, nonstrongly convex case, and step size fixed across $K$. Corresponds to Figure~\ref{fig:nonstrong_NNLS_gridt} (first part).}
      \label{tab:nnls_nonstrong_fixed_pt1}
      \begin{center}
      \adjustbox{max width=\textwidth}{
        \begin{tabular}{llllllll}
          \toprule
           $t$ & $K$ & $\text{VPSDP}$ & $\text{SM}$ & $\text{PEP}$ 
           & 
           \begin{tabular}{@{}c@{}}VPSDP \\ solve time (s)\end{tabular}
           & 
           $\frac{\text{PEP}}{\text{VPSDP}}$
           & 
           $\frac{\text{PEP}}{\text{SM}}$
           \\
          \midrule
          \csvreader[
            separator=comma,
            late after line=\\,
            late after last line=\\\bottomrule,]
            {NNLS_nonstrong_rounded_pt1.bst}{}
            {\ifnum\thecsvrow=11 \hline\fi 
            \ifnum\thecsvrow=21 \hline\fi
            \csvlinetotablerow}
        \end{tabular}}
      \end{center}
\end{table}

\begin{table}[h]
      \caption{Nonnegative least squares results, nonstrongly convex case, and step size fixed across $K$. Corresponds to Figure~\ref{fig:nonstrong_NNLS_gridt} (second part).}
      \label{tab:nnls_nonstrong_fixed_pt2}
      \begin{center}
      \adjustbox{max width=\textwidth}{
        \begin{tabular}{llllllll}
          \toprule
           $t$ & $K$ & $\text{VPSDP}$ & $\text{SM}$ & $\text{PEP}$ 
           & 
           \begin{tabular}{@{}c@{}}VPSDP \\ solve time (s)\end{tabular}
           & 
           $\frac{\text{PEP}}{\text{VPSDP}}$
           & 
           $\frac{\text{PEP}}{\text{SM}}$
           \\
          \midrule
          \csvreader[
            separator=comma,
            late after line=\\,
            late after last line=\\\bottomrule,]
            {NNLS_nonstrong_rounded_pt2.bst}{}
            {\ifnum\thecsvrow=11 \hline\fi 
\csvlinetotablerow}
        \end{tabular}}
      \end{center}
\end{table}

\begin{table}[h]
      \caption{Nonnegative least squares results, nonstrongly convex case, and silver step size schedule. Corresponds to Figure~\ref{fig:NNLS_silver_nonstrongcvx}.}
      \label{tab:nnls_silver_nonstrongcvx}
      \begin{center}
      \adjustbox{max width=\textwidth}{
        \begin{tabular}{lllllllll}
          \toprule
           Schedule & $K$ & $t$ & $\text{VPSDP}$ & $\text{SM}$
           & $\text{PEP}$ 
           & 
           \begin{tabular}{@{}c@{}}VPSDP \\ solve time (s)\end{tabular}
           & 
           $\frac{\text{PEP}}{\text{VPSDP}}$
           & 
           $\frac{\text{PEP}}{\text{SM}}$
           \\
          \midrule
          \csvreader[
            separator=comma,
            late after line=\\,
            late after last line=\\\bottomrule,]
            {nonstrongsilver_ratio_rounded.bst}{}
            {\ifnum\thecsvrow=8 \hline\fi
            \csvlinetotablerow}
        \end{tabular}}
      \end{center}
\end{table}

\begin{table}[h]
      \caption{Nonnegative least squares results, strongly convex case, and step size fixed across $K$.Corresponds to Figure~\ref{fig:strong_NNLS_gridt} (first part).}
      \label{tab:nnls_fixed_pt1}
      \begin{center}
      \adjustbox{max width=\textwidth}{
        \begin{tabular}{llllllll}
          \toprule
           $t$ & $K$ & $\text{VPSDP}$ & $\text{SM}$ & $\text{PEP}$ 
           & 
           \begin{tabular}{@{}c@{}}VPSDP \\ solve time (s)\end{tabular}
           & 
           $\frac{\text{PEP}}{\text{VPSDP}}$
           & 
           $\frac{\text{PEP}}{\text{SM}}$
           \\
          \midrule
          \csvreader[
            separator=comma,
            late after line=\\,
            late after last line=\\\bottomrule,]
            {NNLS_ratio_rounded_pt1.bst}{}
            {\ifnum\thecsvrow=11 \hline\fi 
            \ifnum\thecsvrow=21 \hline\fi
            \csvlinetotablerow}
        \end{tabular}}
      \end{center}
\end{table}

\begin{table}[h]
      \caption{Nonnegative least squares results, strongly convex case, and step size fixed across $K$.Corresponds to Figure~\ref{fig:strong_NNLS_gridt} (second part).}
      \label{tab:nnls_fixed_pt2}
      \begin{center}
      \adjustbox{max width=\textwidth}{
        \begin{tabular}{llllllll}
          \toprule
           $t$ & $K$ & $\text{VPSDP}$ & $\text{SM}$ & $\text{PEP}$ 
           & 
           \begin{tabular}{@{}c@{}}VPSDP \\ solve time (s)\end{tabular}
           & 
           $\frac{\text{PEP}}{\text{VPSDP}}$
           & 
           $\frac{\text{PEP}}{\text{SM}}$
           \\
          \midrule
          \csvreader[
            separator=comma,
            late after line=\\,
            late after last line=\\\bottomrule,]
            {NNLS_ratio_rounded_pt2.bst}{}
            {\ifnum\thecsvrow=11 \hline\fi 
            \ifnum\thecsvrow=21 \hline\fi
            \csvlinetotablerow}
        \end{tabular}}
      \end{center}
\end{table}

\begin{table}[h]
      \caption{Nonnegative least squares results, strongly convex case, and silver step size schedule. Corresponds to Figure~\ref{fig:NNLS_silver_strongcvx}.}
      \label{tab:nnls_silver_strongcvx}
      \begin{center}
      \adjustbox{max width=\textwidth}{
        \begin{tabular}{lllllllll}
          \toprule
           Schedule & $K$ & $t$ & $\text{VPSDP}$ & $\text{SM}$
           & $\text{PEP}$ 
           & 
           \begin{tabular}{@{}c@{}}VPSDP \\ solve time (s)\end{tabular}
           & 
           $\frac{\text{PEP}}{\text{VPSDP}}$
           & 
           $\frac{\text{PEP}}{\text{SM}}$
           \\
          \midrule
          \csvreader[
            separator=comma,
            late after line=\\,
            late after last line=\\\bottomrule,]
            {strongsilver_ratio_rounded.bst}{}
            {\ifnum\thecsvrow=8 \hline\fi 
            \csvlinetotablerow}
        \end{tabular}}
      \end{center}
\end{table}

\begin{table}[h]
      \caption{Results for the network utility maximization experiment in Figure~\ref{fig:num}.}
      \label{tab:num_results}
      \begin{center}
      \adjustbox{max width=\textwidth}{
        \begin{tabular}{llllllll}
          \toprule
           Init. & $K$ & $\text{VPSDP}$ & $\text{SM}$ & $\text{PEP}$ 
           & 
           \begin{tabular}{@{}c@{}}VPSDP\\solve time (s)\end{tabular}
           & 
           $\frac{\text{PEP}}{\text{VPSDP}}$
           & 
           $\frac{\text{PEP}}{\text{SM}}$
           \\
          \midrule
          \csvreader[
            separator=comma,
            late after line=\\,
            late after last line=\\\bottomrule,]
            {NUM_ratio_rounded.bst}{}
            {\ifnum\thecsvrow=6 \hline\fi 
            \ifnum\thecsvrow=11 \hline\fi
            \csvlinetotablerow}
        \end{tabular}}
      \end{center}
\end{table}

\begin{table}[h]
      \caption{Results for the Lasso experiment in Figure~\ref{fig:lasso}.}
      \label{tab:lasso_results}
      \begin{center}
      \adjustbox{max width=\textwidth}{
        \begin{tabular}{llllllll}
          \toprule
           Alg. & $K$ & $\text{VPSDP}$ & $\text{SM}$ & $\text{PEP}$ 
           & 
           \begin{tabular}{@{}c@{}}VPSDP \\ solve time (s)\end{tabular}
           & 
           $\frac{\text{PEP}}{\text{VPSDP}}$
           & 
           $\frac{\text{PEP}}{\text{SM}}$
           \\
          \midrule
          \csvreader[
            separator=comma,
            late after line=\\,
            late after last line=\\\bottomrule,]
            {ISTA_ratio_rounded.bst}{}
{\ifnum\thecsvrow=8 \hline\fi \csvlinetotablerow}
        \end{tabular}}
      \end{center}
\end{table}

\begin{table}[h]
      \caption{Exact verification problem results for Lasso with a 1 hour time limit with a target optimality gap of 1\%. Corresponds to Figure~\ref{fig:lasso_global}.}
      \label{tab:lasso_glob_results}
      \begin{center}
      \adjustbox{max width=\textwidth}{
        \begin{tabular}{llllll}
          \toprule
           Alg. & $K$ & 
           \begin{tabular}{@{}c@{}}Best \\ lower bound \end{tabular}
           &  \begin{tabular}{@{}c@{}}Best \\ upper bound \end{tabular}
           & Optimality gap (\%) 
           & 
           \begin{tabular}{@{}c@{}}Gurobi \\ solve time (s)\end{tabular}
           \\
          \midrule
          \csvreader[
            separator=comma,
            late after line=\\,
            late after last line=\\\bottomrule,]
            {ISTA_glob_roundsci.bst}{}
{\ifnum\thecsvrow=8 \hline\fi \csvlinetotablerow}
        \end{tabular}}
      \end{center}
\end{table}

\begin{table}[ht]
      \caption{Results for the optimal control problem in Figure~\ref{fig:mpc}.}
      \label{tab:mpc_results}
      \begin{center}
      \adjustbox{max width=\textwidth}{
        \begin{tabular}{llllllll}
          \toprule
           $\rho$ & $K$ & $\text{VPSDP}$ & $\text{SM}$ & $\text{PEP}$ 
           & 
           \begin{tabular}{@{}c@{}}VPSDP \\ solve time (s)\end{tabular}
           & 
           $\frac{\text{PEP}}{\text{VPSDP}}$
           & 
           $\frac{\text{PEP}}{\text{SM}}$
           \\
          \midrule
          \csvreader[
            separator=comma,
            late after line=\\,
            late after last line=\\\bottomrule,]
            {MPC_ratio_rounded.bst}{}
{\ifnum\thecsvrow=7 \hline\fi \csvlinetotablerow}
        \end{tabular}}
      \end{center}
\end{table}

\end{document}